\definecolor{dkgreen}{rgb}{0,0.6,0}
\definecolor{gray}{rgb}{0.5,0.5,0.5}
\definecolor{mauve}{rgb}{0.58,0,0.82}
\tiny\color{gray},
 \newtheorem{thm}{Theorem}[section]
 \newtheorem{lem}{Lemma}[section]
 \newtheorem{rem}{Remark}
 \theoremstyle{definition}
 \theoremstyle{remark}
 \numberwithin{equation}{section}
\begin{document}

\title[Monochromatic vertex-disconnection of graphs]
{Monochromatic vertex-disconnection of\\ graphs}

\author[M. Fu]{Miao Fu}
\address{%
Center for Applied Mathematics \\
Tianjin University\\
Tianjin, 300354\\
P. R. China}
\email{miaofu@tju.edu.cn}

\author[Y.Q. Zhang ]{Yuqin Zhang*}
\address{%
School of Mathematics \\
Tianjin University\\
Tianjin, 300072\\
P. R. China}
\email{yuqinzhang@tju.edu.cn}

\thanks{*Yuqin Zhang: yuqinzhang@tju.edu.cn
\\School of Mathematics, Tianjin University, Tianjin, P. R. China 300072
\\Miao Fu: miaofu@tju.edu.cn
\\Center for Applied Mathematics, Tianjin University, Tianjin, P. R. China 300354
}

\subjclass{05C15, 05C40, 05C85}

\keywords{Monochromatic vertex cut; Monochromatic vertex-disconnection number; Connectivity; Block; Erd\H{o}s-Gallai-type problems}

\date{}

\begin{abstract}
  Let $G$ be a vertex-colored graph. A vertex cut $S$ of $G$ is called a $monochromatic$ $vertex$ $cut$ if the vertices of $S$ are colored with the same color. A graph $G$ is $monochromatically$ $vertex$-$disconnected$ if any two nonadjacent vertices of $G$ has a monochromatic vertex cut separating them. The $monochromatic$ $vertex$-$disconnection$ $number$ of $G$, denoted by $mvd(G)$, is the maximum number of colors that are used to make $G$ monochromatically vertex-disconnected.
  In this paper, the connection between the graph parameters are studied: $mvd(G)$, connectivity and block decomposition.
  We determine the value of $mvd(G)$ for some well known graphs, and then characterize $G$ when $n-5\leq mvd(G)\leq n$ and all blocks of $G$ are minimally 2-connected triangle-free graphs.
  We obtain the maximum size of a graph $G$ with $mvd(G)=k$ for any $k$. Furthermore, we study the Erd\H{o}s-Gallai-type results for $mvd(G)$, and completely solve them.
  Finally, we propose an algorithm to compute $mvd(G)$ and give an $mvd$-coloring of $G$.
\end{abstract}
\maketitle

\section{ Introduction }
\quad \quad Connectivity is perhaps the most fundamental graph theoretic subject, in both a combinatorial sense and an algorithmic sense. To expand its application area, connectivity is strengthened, such as requiring graph coloring, hamiltonicity \cite{Kirkman}, conditional connectivity \cite{Harary}.

In 2008, Chartrand et al. \cite{Chartrand} introduced an interesting way, the rainbow connection, to strengthen the connectivity requirement. An edge-colored graph $G$ is rainbow connected if any two vertices are connected by a path whose edges have distinct colors. This concept comes from the communication of information between agencies of government and is also applied to communication networks \cite{Chakraborty}.
As one of the important topics in the study of colored connectivity of graphs, rainbow connection has been well studied by many scholars, and meanwhile, many colored versions of connectivity parameters have been introduced in recent years.
For example, the monochromatic connection introduced by Caro and Yuster \cite{Caro} in 2011 is defined from the monochromatic-version; the monochromatic vertex connection introduced by Cai, Li, and Wu \cite{Cai} in 2018 is defined from the vertex-version.
For more results, we refer the reader to \cite{Chandran, Dudek, GuR, Krivelevich, LuZ, LiP, MaY}.

There are two ways to study the connectivity, one using paths and the other using vertex cuts. These concepts mentioned above use paths, so it is natural to consider monochromatic vertex cuts. Let $G$ be a vertex-colored graph.
A vertex cut $S$ is called a $monochromatic$ $vertex$ $cut$ if the vertices of $S$ are colored with the same color, and a $monochromatic$ $x$-$y$ $vertex$ $cut$ is a monochromatic vertex cut that separates $x$ and $y$. Obviously, if $x$ is adjacent to $y$, there is no $x$-$y$ vertex cut, so we only need to consider nonadjacent vertices in the sequel.
Then, $G$ is called $monochromatically$ $vertex$-$disconnected$ if any two nonadjacent vertices of $G$ has a monochromatic vertex cut separating them; the corresponding coloring is called $monochromatic$ $vertex$-$disconnection$ $coloring$ ($MVD$-$coloring$ for short).
A natural question arises: how many colors are available to ensure that $G$ is monochromatically vertex-disconnected? Such an upper bound must exist and is called the $monochromatic$ $vertex$-$disconnection$ $number$ of $G$, denoted by $mvd(G)$; the corresponding coloring is called $mvd$-coloring.

In addition to being a natural combinatorial measure, our parameter can also be applied to communication networks. Suppose that $G$ represents a network (e.g., a cellular network) where messages can be transmitted between any two vertices. To intercept messages (e.g., to prevent the transmission of error messages), each vertex is equipped with an interceptor that requires a fixed password (color) to be turned on. There is a fixed interception passphrase between any two different vertices. Entering this passphrase at the vertex cut where the password matches will turn on these interceptors and intercept the message between the two vertices. To enhance system security, the number of passwords should be as large as possible. This number is precisely $mvd(G)$.

In this paper, the connection between graph parameters are studied: $mvd(G)$ and connectivity $\kappa(G)$. We obtain the following results in Section 2:
\begin{thm}\label{bound}
If $G$ is a connected non-complete graph, then $1\leq mvd(G)\leq n-\kappa^{+}(G)+1 \leq n-\kappa(G)+1$.
\end{thm}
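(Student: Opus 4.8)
The plan is to prove the two bounds separately; the lower bound is immediate, and the content lies in the upper bound, which rests on Menger's theorem together with a pigeonhole count on the color classes.

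For the lower bound I would exhibit a single valid coloring. Color every vertex of $G$ with one and the same color. Then every vertex cut of $G$ is trivially monochromatic. Since $G$ is connected and non-complete, any pair of nonadjacent vertices $x,y$ admits some $x$--$y$ vertex cut (for instance $N(x)$, which separates $x$ from $y$ because $y\neq x$ and $y\notin N(x)$), and that separator is monochromatic under the one-coloring. Hence the one-coloring is an $MVD$-coloring, so $mvd(G)\ge 1$.

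For the upper bound, I read $\kappa^{+}(G)$ as the maximum local connectivity over nonadjacent pairs, $\kappa^{+}(G)=\max\{\kappa_G(x,y):xy\notin E(G)\}$, where $\kappa_G(x,y)$ is the minimum size of an $x$--$y$ vertex cut; since $G$ is non-complete this maximum is taken over a nonempty set, and clearly $\kappa^{+}(G)\ge\kappa(G)$. Now fix an arbitrary $mvd$-coloring of $G$ using $mvd(G)$ colors, and choose a nonadjacent pair $x,y$ attaining $\kappa_G(x,y)=\kappa^{+}(G)$. Because the coloring is an $MVD$-coloring and $x,y$ are nonadjacent, there is a monochromatic $x$--$y$ vertex cut $S$; as $S$ separates $x$ from $y$, Menger's theorem gives $|S|\ge\kappa_G(x,y)=\kappa^{+}(G)$. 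The final step is a counting argument: all $|S|\ge\kappa^{+}(G)$ vertices of $S$ share one color, contributing a single color, while the remaining at most $n-\kappa^{+}(G)$ vertices contribute at most $n-\kappa^{+}(G)$ further colors. Thus $mvd(G)\le 1+(n-\kappa^{+}(G))=n-\kappa^{+}(G)+1$, and the chain is completed by $\kappa^{+}(G)\ge\kappa(G)$, which yields $n-\kappa^{+}(G)+1\le n-\kappa(G)+1$.

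The argument is essentially routine once the right separator is isolated, so I do not expect a serious obstacle; the one genuine decision is to select the nonadjacent pair \emph{maximizing} the local connectivity rather than an arbitrary pair, which is precisely what sharpens the bound from $n-\kappa(G)+1$ to $n-\kappa^{+}(G)+1$. The only points needing care are the existence of an $x$--$y$ cut for the lower bound and the bookkeeping of the color classes in the pigeonhole count.
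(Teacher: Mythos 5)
Your proof is correct and follows essentially the same route as the paper: a single-color coloring for the lower bound, and for the upper bound a monochromatic cut $S$ for a nonadjacent pair maximizing the local connectivity $\kappa(x,y)$, counted as one color for $S$ plus at most $n-|S|$ colors outside. The only cosmetic difference is that you invoke Menger's theorem where the inequality $|S|\geq\kappa(x,y)$ already follows from the definition of $\kappa(x,y)$ as the minimum size of an $x$--$y$ vertex cut, and that the paper bounds $mvd(G)\leq n-\kappa(x,y)+1$ for an arbitrary nonadjacent pair before passing to the supremum $\kappa^{+}(G)$, rather than fixing the maximizing pair at the outset.
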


\noindent For the graph $G$ with $\kappa(G)=2$, we give a tighter upper bound which is better than the one in Theorem \ref{bound}:
\begin{thm}\label{mini2bound}
If $G$ is a minimally $2$-connected graph of order $n\geq 4$, then $mvd(G)\leq \left\lfloor {\frac{n}{2}}\right\rfloor$.
\end{thm}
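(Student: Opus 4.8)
The plan is to reduce the whole bound to a single structural fact: in every \emph{valid} $MVD$-coloring of $G$ each color class contains at least two vertices. Granting this, if such a coloring uses $k$ colors then its $k$ classes are disjoint sets of size $\ge 2$, so $2k\le n$ and hence $k\le\lfloor n/2\rfloor$; maximizing over valid colorings gives $mvd(G)\le\lfloor n/2\rfloor$. Thus the argument rests entirely on excluding singleton color classes, and the two inputs I would use are: (i) since $G$ is $2$-connected, every vertex cut — in particular every monochromatic one — has at least $\kappa(G)=2$ vertices; and (ii) a minimally $2$-connected graph with $n\ge 4$ is triangle-free, so each neighborhood is an independent set and, as $\delta(G)\ge 2$, every vertex has two nonadjacent neighbors. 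Fact (ii) is the only place minimal $2$-connectivity is really used; I would either cite the classical characterization of minimally $2$-connected graphs or give a short direct argument showing that a triangle on $\ge 4$ vertices forces either two adjacent vertices of degree $\ge 3$ or a cut vertex, both impossible here.

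For the key step I would argue by contradiction. Suppose some color is used on exactly one vertex $v$. By (i) a monochromatic cut consists of at least two equally colored vertices, so $\{v\}$ is never a cut and, more to the point, $v$ can belong to no monochromatic cut at all. Using (ii), pick two neighbors $a,b$ of $v$ with $a\not\sim b$. Since $a,b$ are nonadjacent and the coloring is an $MVD$-coloring, there is a monochromatic cut $S$ separating them; by definition $a,b\notin S$, and by the previous remark $v\notin S$. But then the two-edge path $a\,v\,b$ lies entirely in $G-S$, so $a$ and $b$ are in the same component of $G-S$, contradicting that $S$ separates them. Hence no singleton color class exists, which is exactly the fact needed to finish the count in the first paragraph.

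The step I expect to be the genuine obstacle is not the counting but securing input (ii): the argument collapses without triangle-freeness, because a vertex whose neighborhood is a clique could safely receive a private color. This is also precisely where the hypothesis $n\ge 4$ enters, since $K_3$ is minimally $2$-connected yet complete, has no nonadjacent pairs, and satisfies $mvd(K_3)=3>\lfloor 3/2\rfloor$; this shows both that the small case must be excluded and that triangle-freeness, rather than minimal $2$-connectivity per se, is what drives the bound. Finally, to check the bound is best possible I would record an optimal coloring of the cycle $C_n$, pairing vertices so that each color class has exactly two vertices and every nonadjacent pair is separated by a monochromatic cut, giving $mvd(C_n)=\lfloor n/2\rfloor$.
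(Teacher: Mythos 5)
Your proof is correct, and it takes a genuinely different route from the paper's. The paper argues by induction along a special ear decomposition: it first computes $mvd(C_n)=\left\lfloor n/2\right\rfloor$ for cycles (Lemma~\ref{tree}), then uses Lemma~\ref{internalofPt} to obtain, for a minimally $2$-connected non-cycle $G$, an ear decomposition in which every ear has internal vertices, and bounds $|\tau(G)|\leq |\tau(G_{t-1})|+\left\lfloor(|P_{t-1}|-2)/2\right\rfloor$ by applying Lemma~\ref{induced} to the induced subgraph $G_{t-1}$ and to the cycle closed off by the last ear. Notably, the engine of the paper's induction step is exactly your observation in local form: a color used on only one internal vertex $x_j$ of the last ear would force the monochromatic $x_{j-1}$-$x_{j+1}$ cut to contain $x_j$ together with a second, differently colored vertex. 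What you do differently is promote this to a global statement: in any $MVD$-coloring of a $2$-connected triangle-free graph no color class is a singleton, because a singleton-colored $v$ lies in no monochromatic cut (such a cut would be $\{v\}$, of size $1<\kappa(G)$), while any two neighbors $a,b$ of $v$ are nonadjacent by triangle-freeness and every $a$-$b$ vertex cut must contain the sole internal vertex $v$ of the path $avb$; pigeonhole then gives $2k\leq n$. This buys a shorter proof with no ear-decomposition machinery and a strictly stronger theorem, $mvd(G)\leq\left\lfloor n/2\right\rfloor$ for \emph{every} $2$-connected triangle-free graph of order $n\geq4$ (covering, e.g., the Petersen graph and grids directly, where the paper's Claim~1 needs its hypothesis on ears), with minimal $2$-connectivity entering only through Plummer's triangle-freeness result (Lemma~\ref{trianglefree}), which both you and the paper invoke; your fallback sketch of a direct triangle-freeness argument is the only hand-wavy spot, but citing the classical lemma, as you propose, closes it. Your concluding remarks on sharpness via $C_n$ and on the necessity of excluding $K_3$ match the paper's Lemma~\ref{tree} and the $n\geq4$ hypothesis.
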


\noindent For the graph $G$ with $\kappa(G)=1$, $mvd$-coloring, a global property of $G$, is transformed into a local property of each block, which greatly simplifies the original problem:
\begin{thm}\label{blockrelation}
If $\kappa(G)=1$ and $G$ has $r$ blocks $B_{1},$ $\ldots , B_{r}$, then $mvd(G)=(\sum\limits_{i=1}^{r} mvd(B_{i}))-r+1$.
\end{thm}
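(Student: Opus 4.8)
The plan is to localize the global $MVD$-property to the individual blocks, and then do the counting by induction on the number of blocks $r$. The crucial structural fact I would establish first is a \emph{path-containment lemma}: if $x$ and $y$ lie in a common block $B_i$, then every $x$--$y$ path of $G$ is contained in $B_i$. To prove this, suppose a path $P$ from $x$ to $y$ uses a vertex outside $B_i$; then $P$ must leave $B_i$ through a cut vertex $c$ and re-enter through a cut vertex $c'$ of $B_i$. If $c\neq c'$, the portion of $P$ outside $B_i$ together with a path inside $B_i$ joining $c$ and $c'$ produces a cycle through an external vertex, forcing that vertex into the maximal $2$-connected subgraph $B_i$, a contradiction; if $c=c'$ the simple path $P$ would repeat a vertex, again impossible. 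Hence $P\subseteq B_i$. A direct consequence is that, for nonadjacent $x,y$ in the same block, a set $S$ separates $x$ and $y$ in $G$ if and only if $S\cap V(B_i)$ separates them in $B_i$.

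Using this, I would prove the \emph{localization equivalence}: a coloring $c$ is an $MVD$-coloring of $G$ if and only if its restriction $c|_{B_i}$ is an $MVD$-coloring of $B_i$ for every block $B_i$. Indeed, any two vertices in distinct blocks are nonadjacent and are separated by a single cut vertex lying on the block-tree path between them, and a single vertex is trivially monochromatic; so the only binding constraints come from nonadjacent pairs inside a common block, which by the path-containment lemma are governed entirely by the coloring of that block (a monochromatic cut in $G$ restricts to one in $B_i$, and conversely).

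With the equivalence in hand, I would run an induction on $r$. The base case $r=1$ is the identity $mvd(G)=mvd(B_1)$. For $r\geq 2$, pick an end-block $B_r$ of the block-cut tree; it meets the rest of the graph in a single cut vertex $v$, and $G':=G-(V(B_r)\setminus\{v\})$ has blocks $B_1,\dots,B_{r-1}$ with $mvd(G')=\big(\sum_{i=1}^{r-1}mvd(B_i)\big)-(r-1)+1$ by the inductive hypothesis. For the lower bound I would take optimal colorings of $G'$ and of $B_r$ on disjoint palettes and then identify the color class of $v$ in $B_r$ with the color of $v$ in $G'$; since recoloring a whole color class is just a relabeling on $B_r$, both restrictions remain $MVD$-colorings, the localization equivalence makes the glued coloring an $MVD$-coloring of $G$, and exactly one color is lost, giving $mvd(G)\geq mvd(G')+mvd(B_r)-1$. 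For the upper bound I would take an optimal $MVD$-coloring $c$ of $G$; by the equivalence $c|_{G'}$ and $c|_{B_r}$ are $MVD$-colorings, so their color sets $C'$ and $C_r$ satisfy $|C'|\leq mvd(G')$ and $|C_r|\leq mvd(B_r)$, while $V(G')\cup V(B_r)=V(G)$ and $V(G')\cap V(B_r)=\{v\}$ force every color of $G$ into $C'\cup C_r$ with $c(v)\in C'\cap C_r$; hence $mvd(G)=|C'\cup C_r|\leq mvd(G')+mvd(B_r)-1$. Combining the two bounds and unwinding the induction yields the claimed formula.

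I expect the main obstacle to be the path-containment lemma and the clean verification that it turns separation into a purely block-local phenomenon; once separators of same-block pairs are confined to their block (and cross-block pairs are handled by a lone cut vertex), the remaining argument is a routine induction in which each peeling of an end-block forces the sharing of exactly one color at its cut vertex, accounting precisely for the $-r+1$ term.
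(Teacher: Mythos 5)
Your proposal is correct, and its skeleton coincides with the paper's: the same path-containment fact (the paper proves it inside Claim 1 of its Theorem 2.2, noting that an $x,y$-path leaving a block would lie on a cycle and hence, by maximality of the block, inside it), the same localization equivalence (a coloring is an $MVD$-coloring of $G$ iff its restriction to every block is one, with cross-block pairs handled by a single cut vertex), and the same induction peeling an end-block $B_r$ via $G'=G-(V(B_r)\setminus\{v\})$. The one genuine divergence is how the count is closed. The paper first proves a stronger characterization (its Theorem 2.2): $\tau$ is an $mvd$-coloring of $G$ iff each block restriction is an $mvd$-coloring of that block \emph{and} distinct blocks use disjoint palettes except at cut vertices; the induction then reads off $|\tau(G')|$ and $|\tau(B_r)|$ exactly. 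You instead sandwich: the gluing construction with disjoint palettes identified at $v$ gives $mvd(G)\geq mvd(G')+mvd(B_r)-1$, and the inclusion-exclusion bound $|C'\cup C_r|\leq |C'|+|C_r|-1$ (valid because $c(v)\in C'\cap C_r$) gives the reverse. Your route is slightly more economical and more watertight at one spot: the paper's necessity direction asserts the palette-disjointness of an optimal coloring with an unproven ``It can be shown that,'' whereas your upper bound needs only $|C'\cap C_r|\geq 1$, which is immediate. What the paper's heavier characterization buys is reuse: Theorem 2.2 is invoked again in Section 3 (Theorem 3.4) and underpins the correctness of the coloring algorithm in Section 5, while your argument yields only the numerical identity of this theorem.
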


We have obtained concise and powerful results when $\kappa(G)=1$. So in Section 3, we focus on the value of $mvd(G)$ for some well known graphs with $\kappa(G)\geq2$ and obtain several classes of graphs with $mvd(G)=k$, where $k\in \{1, 2, n\}$.
Moreover, we completely characterize $G$ when $n-5\leq mvd(G)\leq n$ and all blocks of $G$ are minimally 2-connected triangle-free graphs.

The Erd\H{o}s-Gallai theorem \cite{Erdos} originated in 1959 and is an interesting problem in extremal graph theory, where Erd\H{o}s-Gallai-type result aims at determining the maximum or minimum value of a graph parameter with some given properties. For a graph parameter, it is always interesting and challenging to get the Erd\H{o}s-Gallai-type results, see \cite{Allen, Faudree, LiP, Lewin, Ning} for more such results on various kinds of graph parameters. In Section 4, we study two kinds of Erd\H{o}s-Gallai-type problems for our parameter $mvd(G)$.

\textbf{Problem I}. Given two positive integers $n, k$ with $1\leq k\leq n$, compute the minimum integer $f_v(n, k)$ such that for any graph $G$ of order $n$, if $|E(G)|\geq f_v(n, k)$, then $mvd(G)\geq k$.

\textbf{Problem II}. Given two positive integers $n, k$ with $1\leq k\leq n$, compute the maximum integer $g_v(n, k)$ such that for any graph $G$ of order $n$, if $|E(G)|\leq g_v(n, k)$, then $mvd(G)\leq k$.

\noindent By Theorem \ref{blockrelation}, for any tree $G$ of order $n$ we have $mvd(G)=n$, so $g_v(n, k)$ does not exist for $1\leq k\leq n-1$. Since $mvd(K_n)=n$, where $K_n$ is a complete graph, $g_v(n, n) = \frac{n(n-1)}{2}$. The result of Problem I is shown below.
\begin{thm}\label{E-G}
Given two positive integers $n,\ k$ with $n\geq5$ and $1\leq k\leq n$,
\begin{equation}\label{E-G-equation}
    f_v(n, k)=
   \begin{cases}
   n-1,&\mbox{$k=1$,}\\
   \frac{n(n-1)}{2}-1,&\mbox{$2\leq k\leq3$,}\\
   \frac{n(n-1)}{2},&\mbox{$4\leq k\leq n$.}\nonumber
   \end{cases}
  \end{equation}
\end{thm}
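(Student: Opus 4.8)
The plan is to read $f_v(n,k)$ as a threshold and, in each of the three ranges of $k$, to pin down the extremal graphs sitting at or just below the claimed value together with their $mvd$-numbers. The whole argument is powered by three evaluations: $mvd(K_n)=n$ (already recorded above), $mvd(K_n-e)=3$, and $mvd(H)=1$ for $n\ge 5$, where $H$ is the graph obtained from $K_n$ by deleting two independent edges. I would establish these first and then read off all cases.

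For $mvd(K_n-e)$ with missing edge $uv$, the only nonadjacent pair is $u,v$, and since every other vertex is a common neighbor of $u$ and $v$, the unique $u$-$v$ vertex cut is $V(G)\setminus\{u,v\}$; an $MVD$-coloring must therefore make this $(n-2)$-set monochromatic, leaving only $u$ and $v$ free, whence $mvd(K_n-e)=3$. For $H=K_n-\{uv,xy\}$ the same reasoning forces both $V(G)\setminus\{u,v\}$ and $V(G)\setminus\{x,y\}$ to be monochromatic; as $n\ge 5$ there is a vertex $z\notin\{u,v,x,y\}$ lying in both cuts, which identifies the two color classes and collapses the coloring to a single color, so $mvd(H)=1$. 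The hypothesis $n\ge 5$ is exactly what makes this collapse happen: when $n=4$ the graph $H$ is $C_4$, the two cuts are disjoint, and $mvd(C_4)=2$ by Theorem \ref{mini2bound}.

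These values settle the ranges. For $4\le k\le n$, the only graph with $\frac{n(n-1)}{2}$ edges is $K_n$ with $mvd=n\ge k$, while $K_n-e$ realizes $\frac{n(n-1)}{2}-1$ edges with $mvd=3<k$, so $f_v(n,k)=\frac{n(n-1)}{2}$. For $2\le k\le 3$, the unique graph with $\frac{n(n-1)}{2}-1$ edges is $K_n-e$ with $mvd=3\ge k$, while $H$ realizes $\frac{n(n-1)}{2}-2$ edges with $mvd=1<k$, so $f_v(n,k)=\frac{n(n-1)}{2}-1$. The structural point that makes the ``for any graph $G$'' upper bounds immediate is that the graphs with $\frac{n(n-1)}{2}$ and with $\frac{n(n-1)}{2}-1$ edges are each unique up to isomorphism, so the universal quantifier collapses to a single graph.

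For $k=1$ the substantive half is the lower bound: I would exhibit a graph on $n$ vertices with $n-2$ edges admitting no $MVD$-coloring (for instance $P_{n-1}\cup K_1$), giving $f_v(n,1)\ge n-1$; the matching upper bound amounts to checking that at least $n-1$ edges already guarantee an $MVD$-coloring, for which I would characterize the graphs with no $MVD$-coloring and verify that the densest of them has only $n-2$ edges. The main obstacle throughout is the exact evaluation of $mvd(K_n-e)$ and $mvd(H)$: both rest on recognizing that the relevant vertex cuts are forced and on correctly propagating the monochromatic constraint through a shared cut vertex --- precisely the step where $n\ge 5$ is used. Once these two numbers are secured, the remaining work is bookkeeping governed by the uniqueness of the densest graphs.
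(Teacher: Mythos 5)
Your handling of the two substantive ranges is correct, and it is in fact a leaner route than the paper's. The paper derives Theorem \ref{E-G} from the full extremal Theorem \ref{size} (and hence from Lemma \ref{k=2}'s case analysis of deleting three or four edges from $K_n$), via the translation $f_v(n,k)=s_v(n,k-1)+1$ with $s_v(n,k)=\max\{|E(G)|:|G|=n,\ mvd(G)\leq k\}$. You instead exploit the fact that the universal quantifier in the definition of $f_v$ only ever touches the densest graphs, which are unique up to isomorphism: for the upper bounds you need only $mvd(K_n)=n$ and $mvd(K_n-e)=3$, and for the lower bounds the witnesses $K_n-e$ and $H=K_n$ minus two independent edges. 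Both of your key evaluations are sound and match the paper's own ingredients: $mvd(K_n-e)=3$ is the paper's Lemma \ref{add}, and your intersecting-forced-cuts argument giving $mvd(H)=1$ for $n\geq 5$ is precisely the paper's proof of Theorem \ref{mvd=1}(ii) applied to $H=K_{2,2,1,\ldots,1}$. So for $2\leq k\leq 3$ and $4\leq k\leq n$ your proof is complete and avoids the heavier classification machinery (which the paper needs anyway for Theorem \ref{size}, so it costs the paper nothing to reuse it).

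The genuine gap is your $k=1$ case. The paper's standing convention (Section 2) is that all graphs are connected, so your proposed witness $P_{n-1}\cup K_1$ is inadmissible; and if you instead widen the quantifier to disconnected graphs and declare them to admit no $MVD$-coloring, your plan becomes self-defeating: $K_{n-1}\cup K_1$ has $\binom{n-1}{2}$ edges, far more than $n-2$, and is disconnected, so your intended verification that ``the densest graph with no $MVD$-coloring has only $n-2$ edges'' fails, and this reading would force $f_v(n,1)\geq\binom{n-1}{2}+1$, contradicting the theorem you are proving. The correct (admittedly degenerate) resolution is the paper's: within connected graphs, $mvd(G)\geq 1$ holds unconditionally, every connected graph of order $n$ has at least $n-1$ edges, and $n-1$ is the least threshold at which the statement is nonvacuous --- no extremal construction is needed or possible here. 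A minor citation slip elsewhere: $mvd(C_4)=2$ comes from Lemma \ref{tree}, not from Theorem \ref{mini2bound}, which supplies only the upper bound $\left\lfloor n/2\right\rfloor$.
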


\noindent Moreover, we obtain the maximum size of $G$ with $mvd(G)=k$ for any $k$.
\begin{thm}\label{size}
Given two positive integers $n,k$ with $n>4$ and $1\leq k\leq n$, the maximum size of a connected graph $G$ of order $n$ with $mvd(G)=k$ is
\begin{equation}
    |E(G)|_{max}=
   \begin{cases}
   \frac{n(n-1)}{2}-2,&\mbox{ $k=1\ and\ n\geq5$,}\\
   7,&\mbox{ $k=2\ and\ n=5$,}\\
   \frac{n(n-1)}{2}-4,&\mbox{ $k=2\ and\ n\geq 6$,}\\
   \frac{n(n-1)}{2}-k+2,&\mbox{ $3\leq k\leq n-1$,}\\
   \frac{n(n-1)}{2},&\mbox{ $k=n$.}\nonumber
   \end{cases}
\end{equation}
\end{thm}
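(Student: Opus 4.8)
The plan is to split the argument into the five ranges of $k$ appearing in the statement, in each case exhibiting a construction that attains the claimed size together with a matching upper bound on $|E(G)|$. Two ranges are essentially free. For $k=n$, note $mvd(K_n)=n$ and $K_n$ already has the maximum possible size $\binom{n}{2}$, so $K_n$ is extremal. For $k=1$, the upper bound comes directly from Theorem~\ref{E-G}: since $f_v(n,2)=\frac{n(n-1)}{2}-1$, any $G$ with $mvd(G)=1<2$ satisfies $|E(G)|\le\frac{n(n-1)}{2}-2$; for the construction I would take $K_n$ minus two independent edges $w_1w_2,w_3w_4$, where the two nonadjacent pairs have the unique forced cuts $V\setminus\{w_1,w_2\}$ and $V\setminus\{w_3,w_4\}$, both of which must be monochromatic. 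For $n\ge5$ these sets overlap and their union is $V$, forcing a single color, whence $mvd(G)=1$.

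For the generic range $3\le k\le n-1$ I would, for the lower bound, take $G=K_n-E(K_{1,k-2})$, i.e.\ delete $k-2$ edges all incident with one vertex $v$. Then the unique minimum $v$-$w_i$ cut is $N(v)$, of size $n-k+1$, so coloring $N(v)$ with one color and the remaining $k-1$ vertices with distinct colors is an MVD-coloring with $k$ colors; since $\kappa(G)=n-k+1$, Theorem~\ref{bound} gives $mvd(G)\le k$, hence $mvd(G)=k$, with $|E(G)|=\frac{n(n-1)}{2}-(k-2)$. For the upper bound, suppose $mvd(G)=k$. By Theorem~\ref{bound}, $\kappa^{+}(G)\le n-k+1$, so every nonadjacent pair $\{x,y\}$ has minimum $x$-$y$ cut of size at most $n-k+1$. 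As each common neighbor gives an internally disjoint $x$-$y$ path, this cut has size at least $|N(x)\cap N(y)|\ge n-\overline d(x)-\overline d(y)$, where $\overline d$ denotes degree in the complement; hence $\overline d(x)+\overline d(y)\ge k-1$ for every non-edge. Fixing one non-edge, the number of non-edges meeting it is $\overline d(x)+\overline d(y)-1\ge k-2$, so $G$ has at least $k-2$ non-edges and $|E(G)|\le\frac{n(n-1)}{2}-(k-2)$.

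The case $k=2$ is where the real work lies, and I expect it to be the main obstacle, because Theorem~\ref{bound} only gives $mvd(G)\le n-\kappa^{+}(G)+1\le 3$ (as $\kappa^{+}(G)\le n-2$ always), so it cannot by itself certify $mvd=2$; the coloring must be analyzed directly. The principle I would isolate first is that every monochromatic cut contains all universal (full-degree) vertices, so in an MVD-coloring these all share one ``cut color,'' and the maximum number of colors equals $1$ plus the largest number of vertices that can be simultaneously excluded from a system of separators, one per non-edge. For the constructions: when $n=5$, take $K_5$ minus $\{ab,bc,de\}$; the pairs $\{a,b\},\{b,c\}$ are both cut by $\{d,e\}$ while $\{d,e\}$ is cut by $\{a,b,c\}$, and since these two forced-monochromatic sets partition $V$ one gets exactly two colors, with $|E|=7$. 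When $n\ge6$, take $G=K_n-P_5$ with $P_5=a\text{-}b\text{-}c\text{-}d\text{-}e$: here $c$ is the unique vertex that is a common neighbor of no non-edge, every other vertex lies in some forced separator, and a system of separators avoiding $c$ has union $V\setminus\{c\}$; hence coloring $V\setminus\{c\}$ with one color and $c$ with a second is an optimal MVD-coloring, giving $mvd(G)=2$ and $|E|=\frac{n(n-1)}{2}-4$.

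For the matching upper bound when $k=2$ I would show that no graph with at most three missing edges has $mvd=2$. Writing $G=K_n-E(H)$ with $H$ having at most three edges, there are only finitely many isomorphism types ($0$ or $1$ edge; $2K_2$ or $P_3$; and $3K_2,\ P_3\cup K_2,\ P_4,\ K_{1,3},\ K_3$), and for each I would compute $mvd(G)$ by the cut/coloring analysis above, obtaining values in $\{1,3,4,5,n\}$ but never $2$ once $n\ge6$; for $n=5$ the type $P_3\cup K_2$ already realizes $2$, which is exactly why the bound there drops to three missing edges. Thus $mvd(G)=2$ forces at least four missing edges when $n\ge6$ and at least three when $n=5$, matching the constructions. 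The delicate points I anticipate are verifying that the separator systems in the $n\ge6$ construction genuinely cover all but one vertex (so no third color can be introduced) and carrying out the finite case analysis uniformly in $n$.
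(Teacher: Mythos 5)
Your proposal is correct, and its skeleton matches the paper's: your extremal graphs are exactly theirs ($K_n$ minus two independent edges for $k=1$; $K_5$ minus $P_3\cup K_2$, which is the paper's $G_4$ in Lemma \ref{k=2}; $K_n$ minus the four edges of a path on five vertices, which is the paper's $G_6$; and $K_n$ minus a star $K_{1,k-2}$ at one vertex, which is Lemma \ref{add}(i) stated additively), and your $k=2$ plan --- enumerating the five complements with three edges ($3K_2$, $P_3\cup K_2$, $P_4$, $K_{1,3}$, $K_3$) and checking none yields $mvd=2$ for $n\ge 6$ while $P_3\cup K_2$ yields $2$ exactly at $n=5$ --- is precisely the paper's Claim 1 of Lemma \ref{k=2}, with the same forced-monochromatic-cut reasoning. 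Where you genuinely diverge is the upper bound in the range $3\le k\le n-1$: the paper splits on the number $t$ of vertices of degree $n-1$, and in the harder case proves two counting claims (a vertex of degree $n-2$ exists; $\delta(G)\ge n-k+2$) before applying Theorem \ref{bound} to a suitable pair. Your argument is cleaner and more local: from $mvd(G)=k$ and Theorem \ref{bound} you get $\kappa(x,y)\le n-k+1$ for every non-edge $xy$, and since $\kappa(x,y)\ge |N(x)\cap N(y)|\ge n-\overline{d}(x)-\overline{d}(y)$, every non-edge satisfies $\overline{d}(x)+\overline{d}(y)\ge k-1$, so already the non-edges meeting a single fixed non-edge number at least $\overline{d}(x)+\overline{d}(y)-1\ge k-2$, giving $|E(G)|\le \binom{n}{2}-(k-2)$ in one stroke and uniformly for all $3\le k\le n-1$ (the paper treats $k=3$ separately via Lemma \ref{add}(ii)). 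This complement-degree count is a genuine simplification of the paper's Case 1/Case 2 analysis and would shorten that part of the proof.

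One caveat you should repair: for $k=1$ you invoke Theorem \ref{E-G}, but in the paper $f_v(n,k)$ is \emph{derived from} Theorem \ref{size}, so that citation is circular in the paper's logical order. The fix is already contained in your own proposal: the bound $|E(G)|\le\binom{n}{2}-2$ for $mvd(G)=1$ needs only that $mvd(K_n)=n$ and $mvd(K_n-e)=3$, and the latter is the $k=3$ instance of your star construction. Beyond that, the $k=2$ case analysis is only sketched in your write-up (the verifications that $K_{1,3}$, $K_3$, $P_4$ give $mvd\in\{5,4,3\}$, and that $P_3\cup K_2$ drops from $2$ to $1$ as $n$ passes $5$, must actually be carried out), but the case list is complete and the paper's Lemma \ref{k=2} confirms each outcome you predict, so this is a matter of writing, not of substance.
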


Finally, as a convenient and applicable mathematical tool to solve the $mvd$-coloring problem, based on Theorem \ref{blockrelation}, an algorithm is proposed in Section 5 to obtain $mvd(G)$ and $mvd$ coloring of $G$, or at least to reduce the tedious work. Specifically, a type set consisting of all graphs with known $mvd$-coloring is constructed, and the algorithm performs block decomposition of the complex and huge graphs and colors the blocks that are isomorphic to the elements in the type set. The main part of the code is shown in Appendix B, and the complete code is written in Java and given on Github: https://github.com/fumiaoT/mvd-coloring.git.
\section{Some basic results}
\quad \quad All graphs considered in this paper are simple, connected, finite, undirected. We follow the terminology and notation of Bondy and Murty \cite{Bondy}. Let $n=|G|$ be the order of $G$, and let $|E(G)|$ be the size of $G$. For $D\subseteq E(G)$, $G-D$ is the graph obtained by removing $D$ from $G$. For $S\subseteq V(G)$, $G-S$ is the graph obtained by removing $S$ and the edges incident to the vertices of $S$ from $G$. We use $[r]$ to denote the set $\{1, 2, \dots,r\}$. For a vertex-coloring $\tau$ of $G$, $\tau(v)$ is the color of vertex $v$, $\tau(G)$ is the set of colors used in $G$, and $|\tau(G)|$ is the number of colors in $\tau(G)$. If $H$ is a subgraph of $G$, then the part of the coloring of $\tau$ on $H$ is called $\tau$ $restricted$ on $H$. To show the connection between $mvd(G)$ and connectivity $\kappa(G)$, we need the following lemma:
\begin{lem}\label{induced}
If $\tau$ is an $MVD$-coloring of $G$, then $\tau$ restricted on $G[S]$ is also an $MVD$-coloring of $G[S]$.
\end{lem}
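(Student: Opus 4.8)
The plan is to verify the defining property of an MVD-coloring directly: I would take an arbitrary pair of nonadjacent vertices of $G[S]$ and exhibit a monochromatic vertex cut separating them \emph{inside} $G[S]$, obtained by restricting the cut that $\tau$ already provides in $G$. The guiding principle is that a vertex cut of $G$ should restrict to a vertex cut of the induced subgraph while remaining monochromatic.

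First I would fix nonadjacent vertices $x, y \in S$. Since $G[S]$ is an induced subgraph, $x$ and $y$ are also nonadjacent in $G$; hence, because $\tau$ is an MVD-coloring of $G$, there is a monochromatic $x$-$y$ vertex cut $C$ in $G$, say with every vertex of $C$ receiving a single color $c$ and with $x, y \notin C$.

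The candidate cut in $G[S]$ is $C' := C \cap S$. It is monochromatic because $C' \subseteq C$, so all of its vertices still have color $c$ under the restriction of $\tau$. To see that $C'$ separates $x$ and $y$ in $G[S]$, I would argue by contradiction: suppose there is an $x$-$y$ path $P$ in $G[S] - C'$. Every vertex of $P$ lies in $S$ but not in $C' = C \cap S$, hence lies in $S \setminus C \subseteq V(G) \setminus C$; thus $P$ is an $x$-$y$ path in $G - C$, contradicting that $C$ is an $x$-$y$ cut in $G$. Note that $x, y \in S \setminus C$, so both indeed survive in $G[S] - C'$. When $x$ and $y$ lie in the same component of $G[S]$, this argument also forces $C' \neq \emptyset$, so $C'$ is a genuine monochromatic $x$-$y$ cut of $G[S]$.

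The only delicate point, and the step I expect to need the most care, is the degenerate case in which $x$ and $y$ already lie in different components of $G[S]$ (so that $C'$ could be empty): here the two vertices are trivially separated, and—provided $|S| \geq 3$—any single vertex of $S \setminus \{x, y\}$ furnishes a (trivially monochromatic) $x$-$y$ cut, so the condition still holds. Handling this boundary situation consistently with the paper's convention for cuts is really the whole subtlety; the substantive content is the short path-lifting contradiction above, which is what makes MVD-colorings behave well under passing to induced subgraphs.
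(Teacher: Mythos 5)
Your proposal is correct and follows essentially the same route as the paper: restrict the monochromatic $x$-$y$ cut $D$ of $G$ to $D' = D \cap S$, and derive a contradiction by lifting any surviving $x$-$y$ path in $G[S]-D'$ back to $G-D$. Your extra discussion of the degenerate case where $x$ and $y$ lie in different components of $G[S]$ is careful bookkeeping the paper silently skips, but it does not change the substance of the argument.
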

\begin{proof}
Let the coloring of $\tau$ restricted on $G[S]$ be denoted as $\tau'$ and let $x,\ y$ be two nonadjacent vertices of $G[S]$. If $D$ is a monochromatic $x$-$y$ vertex cut of $G$, then $D'=D\cap V(G[S])$ is a monochromatic $x$-$y$ vertex cut in $G[S]$. Otherwise, if there is an $x,y$-path $P$ in $G[S]-D'$, then $P$ is also in $G-D$, a contradiction. Thus, $\tau'$ is an $MVD$-coloring of $G[S]$.
\end{proof}

\begin{proof}[Proof of Theorem \ref{bound}]
$G$ is a non-complete graph and $x,\ y$ are two nonadjacent vertices of $G$. Let $\kappa(x,y)$ be the minimum size of an $x$-$y$ vertex cut, and let $\kappa^{+}(G)$ be the upper bound of the function $\kappa(x,y)$. Obviously, $mvd(G)\geq 1$. For the upper bound, assume that $S$ is a monochromatic $x$-$y$ vertex cut. Therefore, $mvd(G)\leq 1+(n-|S|)\leq n-\kappa(x,y)+1$. Thus, $mvd(G)\leq n-\kappa^{+}(G)+1\leq n-\kappa(G)+1$.
\end{proof}

A graph $G$ is $minimally$ 2-$connected$ ($minimal\ block$) if $G$ is 2-connected but $G-\{e\}$ is not 2-connected for every $e\in E(G)$.
To obtain a tighter upper bound on the minimally $2$-connected graph than in Theorem \ref{bound}, we need more preparations: a nest sequence of graphs is a sequence $G_0, G_1, \ldots, G_t$ of graphs such that $G_i\subset G_{i+1}, 0\leq i<t$; an $ear\ decomposition$ of a $2$-connected graph $G$ is a nest sequence $G_0, G_1, \ldots, G_t$ of 2-connected subgraphs of $G$ satisfying the following conditions: (i) $G_0$ is a cycle of $G$, (ii) $G_{i+1}=G_{i}\cup P_{i}$, where $P_i$ is an ear of $G_i$, $0\leq i<t$, (iii) $G_{t}=G$.

\begin{lem}\cite{LiXLiuS}\label{internalofPt}
Let $G$ be a minimally $2$-connected graph, and $G$ is not a cycle. Then $G$ has an ear decomposition $G_0, G_1, \ldots, G_{t}\ (t\geq 1)$ satisfying the following conditions:

(i) $G_{i+1}=G_{i}\cup P_{i}\ (0\leq i<t)$, where $P_{i}$ is an ear of $G_{i}$ in $G$ and at least one vertex of $P_{i}$ has degree two in $G$,

(ii) each of the two internally disjoint paths in $G_0$ between the endpoints of $P_0$ has at least one vertex with degree two in $G$.
\end{lem}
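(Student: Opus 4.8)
The plan is to induct on the number of ears $t$ (equivalently, on the cycle rank $|E(G)|-|V(G)|+1$), building the decomposition by peeling off the \emph{last} ear. First I would record the structural facts I intend to use. Since $G$ is $2$-connected but not a cycle, Whitney's theorem gives some ear decomposition with $t\geq 1$. A minimally $2$-connected graph is chordless (every cycle is induced), which rules out single-edge ears: if an edge $e=xy$ were an ear added to a $2$-connected $G_i\ni x,y$, then $G_i$ would contain two internally disjoint $x$-$y$ paths avoiding $e$, making $e$ a chord of a cycle of $G$, a contradiction; hence every ear has an internal vertex. I would also use the Dirac--Plummer fact that every cycle of a minimally $2$-connected graph on at least four vertices contains at least two vertices of degree $2$. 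A convenient reformulation is that, after suppressing all degree-$2$ vertices to obtain the multigraph $G^{*}$, the edges of $G^{*}$ arising from single edges of $G$ between two branch vertices (the \emph{trivial} segments) form a forest, since an all-trivial cycle would lift to a chordless cycle of $G$ with no degree-$2$ vertex.

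For the base case $t=1$, $G$ is a theta graph: two branch vertices joined by three internally disjoint paths, each of which has an internal vertex of degree $2$ (no path is a single edge, by chordlessness). Taking $G_0$ to be the union of two of the paths and $P_0$ the third, condition (i) holds for $P_0$, and condition (ii) holds because each of the two arcs of $G_0$ is one of the remaining paths and so carries a degree-$2$ vertex.

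For the inductive step I would peel the last ear $P$ of an ear decomposition of $G$. Its internal vertices have degree $2$ in $G$ automatically, and $G'=G-\mathrm{int}(P)$ is again minimally $2$-connected: it is $2$-connected by definition of an ear decomposition, and if $G'-e$ were $2$-connected for some $e$, then re-adding the ear $P$ would make $G-e$ $2$-connected, contradicting minimality. Since the cycle rank of $G'$ is $t-1\geq 1$, $G'$ is not a cycle, so the induction hypothesis supplies a good ear decomposition $G_0\subset\cdots\subset G_{t-1}=G'$; appending $P$ as $P_{t-1}$ yields $G$, and condition (i) for this last ear is immediate.

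The hard part will be propagating conditions (i) and (ii) back from $G'$ to $G$. The subtlety is that the two endpoints $a,b$ of the peeled ear $P$ each lose one neighbour in passing to $G'$, so a vertex that serves as a degree-$2$ \emph{witness} for some ear or arc in the $G'$-decomposition may actually be one of $a,b$, which has degree $3$ in $G$. To handle this I would strengthen the induction hypothesis so that it produces a decomposition whose witnesses avoid any prescribed set of at most two marked vertices, the marked set being $\{a,b\}\cap M(G')$, where $M(G')$ is the degree-$2$ set of $G'$. Avoidance is possible because, when $a$ drops to degree $2$, its two remaining segments merge into a single $G'$-segment which, provided at least one of them is a good (nontrivial) segment, still contains a genuine degree-$2$ vertex of $G$ available as an alternative witness; the forest structure of the trivial segments is exactly what I would use to arrange that the peeled ear $P$ can be chosen so this good-segment condition holds at both $a$ and $b$. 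Verifying that such a choice of $P$ always exists, and that the strengthened hypothesis is preserved under the reattachment, is the crux of the argument.
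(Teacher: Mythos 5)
First, a point of comparison: the paper does not prove this lemma at all --- it is imported verbatim from \cite{LiXLiuS} --- so your attempt has to be judged as a standalone proof rather than against an internal argument. Several of your ingredients are sound: cycles in a minimally $2$-connected graph are chordless, hence no ear in any ear decomposition can be a single edge; the internal vertices of the \emph{last} ear have degree two in $G$; $G'=G-\mathrm{int}(P)$ is again minimally $2$-connected, since re-adding the open ear would transport a removable edge of $G'$ back to $G$; and the base case ($G$ a theta graph) is handled correctly. But the proof is incomplete exactly where you yourself flag it: everything is made to rest on a strengthened induction hypothesis (``the decomposition can be chosen with witnesses avoiding any prescribed set of at most two marked vertices'') together with the claim that the peeled ear can be chosen so that the marked set is harmless --- and you prove neither. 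That deferred step is not a routine verification; it is the entire content of the lemma, since the only obstruction to the naive peeling induction is precisely that the endpoints $a,b$ of the peeled ear may be the unique degree-two witnesses in $G'$.

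Worse, the strengthened hypothesis is false as you state it. Take $G'=K_{2,3}$ with parts $\{u,v\}$ and $\{x_1,x_2,x_3\}$: it is minimally $2$-connected and not a cycle, and \emph{every} ear decomposition of it takes $G_0$ to be the union of two of the three $u$--$v$ paths and $P_0$ the third, so the three witnesses demanded by (i) and (ii) are forced to be exactly $x_1,x_2,x_3$. With marked set $\{x_1,x_2\}$ no avoiding decomposition exists. So you cannot quantify over arbitrary pairs of marked vertices; you must restrict to pairs $\{a,b\}$ that are \emph{realizable}, i.e.\ that actually occur as endpoints of an ear whose attachment leaves the resulting graph minimally $2$-connected (and indeed one can check that attaching an ear to $K_{2,3}$ at $x_1,x_2$ destroys minimality: deleting $ux_1$ from the resulting graph leaves it $2$-connected, so the $K_{2,3}$ configuration above never arises --- but your sketch nowhere establishes this kind of fact in general). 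Formulating the correct realizability condition, proving that realizable marks always admit an avoiding decomposition, and proving that the condition propagates through the peeling is the open crux, and until it is supplied the induction does not close. One smaller caution: you attribute to Dirac--Plummer the statement that every cycle of a minimally $2$-connected graph contains at least \emph{two} vertices of degree two; the classical theorem (the degree-$\geq 3$ vertices induce a forest) directly yields only \emph{one} per cycle, so if any later repair of the crux leans on the stronger count, it needs its own proof.
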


\begin{lem}\cite{Plummer}\label{trianglefree}
If $G$ is a minimally $2$-connected graph of order $n\geq 4$, then $G$ contains no triangles.
\end{lem}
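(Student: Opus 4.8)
The plan is to argue by contradiction: assume $G$ is minimally $2$-connected with $n\ge 4$ and contains a triangle on vertices $x,y,z$, and then produce a cut vertex of $G$, contradicting $2$-connectedness. The engine of the argument is minimality applied to each of the three edges of the triangle.

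First I would record two standing facts. Since $G$ is $2$-connected with $n\ge 3$, it is $2$-edge-connected, so deleting a single edge keeps it connected; and a connected graph on at least three vertices that is not $2$-connected has a cut vertex. Hence for the edge $xy$, minimality gives that $G-xy$ has a cut vertex $c$. I would pin down $c$: deleting $x$ (resp. $y$) from $G-xy$ just gives $G-x$ (resp. $G-y$), which is connected, so $c\notin\{x,y\}$; and since $G-xy-c=(G-c)-xy$ is disconnected while $G-c$ is connected, the edge $xy$ must be a bridge of $G-c$ separating $x$ from $y$. If $c\ne z$, then $z$ survives in $G-c$ together with the edges $zx,zy$, so $xzy$ is an $x$-$y$ path in $G-c$ avoiding $xy$, contradicting that $xy$ is a bridge. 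Therefore $c=z$, and $G-z$ is connected with $xy$ a bridge, splitting $V(G)$ as $A\sqcup B\sqcup\{z\}$ with $x\in A$, $y\in B$, and $xy$ the \emph{only} edge of $G$ joining $A$ and $B$.

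Next I would show $\deg_G(z)=2$. Running the identical analysis on the edge $xz$ forces its cut vertex to be $y$, so every $x$-$z$ path avoiding $xz$ passes through $y$; symmetrically, every $y$-$z$ path avoiding $yz$ passes through $x$. If $z$ had a neighbor $a\in A\setminus\{x\}$, then a path from $x$ to $a$ inside the connected set $A$, followed by the edge $az$, would be an $x$-$z$ path avoiding both $xz$ and $y$, a contradiction; symmetrically $z$ has no neighbor in $B\setminus\{y\}$. Hence the only neighbors of $z$ are $x$ and $y$.

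Finally, since $n\ge 4$ we have $|A|+|B|=n-1\ge 3$, so one side, say $A$, contains a second vertex. Then no vertex of $A\setminus\{x\}$ is joined to $B$ (the unique $A$-$B$ edge is $xy$) or to $z$ (whose only neighbors are $x,y$), so $G-x$ disconnects $A\setminus\{x\}$ from $B\cup\{z\}$, making $x$ a cut vertex of $G$; if instead $|B|\ge 2$, the same argument makes $y$ a cut vertex. Either way we contradict $2$-connectedness, so $G$ is triangle-free. I expect the main obstacle to be the middle step, where minimality must be leveraged on all three triangle edges simultaneously to force each cut vertex to be the opposite vertex and thereby to pin $\deg z$ to exactly $2$; once the clean partition $V(G)=A\sqcup B\sqcup\{z\}$ with $\deg z=2$ is in hand, the concluding contradiction is immediate. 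An alternative route would feed the triangle into the ear decomposition of Lemma \ref{internalofPt}, but tracking the degree-two vertices through the ears seems messier than the direct argument above.
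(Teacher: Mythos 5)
The paper offers no proof of this lemma at all: it is imported as a known result from Plummer \cite{Plummer}, so there is no internal argument to compare yours against. Your blind proof is correct and self-contained, and every step checks out. Minimality plus $2$-edge-connectedness (which follows from $2$-connectedness via $\lambda(G)\geq\kappa(G)$) gives a cut vertex $c$ of $G-xy$; the identities $(G-xy)-x=G-x$ and $(G-xy)-y=G-y$ correctly rule out $c\in\{x,y\}$; the triangle path $xzy$ forces $c=z$, yielding the partition $V(G)=A\sqcup B\sqcup\{z\}$ with $xy$ the unique edge of $G$ between $A$ and $B$; permuting the roles of the triangle edges pins the cut vertex of $G-xz$ to $y$ and that of $G-yz$ to $x$, and your elimination of a neighbor of $z$ in $A\setminus\{x\}$ (respectively $B\setminus\{y\}$) is sound, since the connecting path inside the component avoids both the forbidden edge and the forced cut vertex; finally, with $\deg_G(z)=2$ and $|A|+|B|=n-1\geq 3$, whichever of $x,y$ lies on a side of size at least $2$ becomes a cut vertex of $G$, the desired contradiction. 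For comparison, the standard route in the literature (going back to Dirac and Plummer) deduces triangle-freeness from a broader structure theory of minimal blocks, e.g., the fact that every cycle in a minimally $2$-connected graph is chordless, after which two disjoint paths from a fourth vertex to the triangle manufacture a cycle having $xy$ as a chord. Your direct edge-by-edge analysis avoids that machinery and needs nothing beyond the definition, which makes it well suited to this paper, where the lemma is otherwise used only as a black box in the proof of Theorem \ref{mini2bound}.
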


\begin{lem}\label{tree}
If $G$ is a cycle of order $n\geq 4$, then $mvd(G)=\left\lfloor {\frac{n}{2}}\right\rfloor$.
\end{lem}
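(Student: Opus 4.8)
The plan is to first characterize the MVD-colorings of a cycle through its cut structure, and then to squeeze $mvd(G)$ between $\left\lfloor\frac{n}{2}\right\rfloor$ from above and below. Write the vertices of the cycle $G$ as $v_0, v_1, \ldots, v_{n-1}$ in cyclic order. For any two nonadjacent vertices $x, y$, the two $x$-$y$ paths along the cycle are its two arcs, and their interiors partition $V(G)\setminus\{x,y\}$. A vertex set $S$ separates $x$ and $y$ precisely when $S$ meets the interior of each arc. Hence a coloring $\tau$ is an MVD-coloring if and only if, for every nonadjacent pair $x, y$, some color appears on an interior vertex of both arcs; in that case, choosing one such vertex from each arc yields a monochromatic $x$-$y$ cut. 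This reformulation is the engine for both bounds.

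For the upper bound I would show that in any MVD-coloring every color class has at least two vertices. Suppose a color $c$ is used only on a single vertex $w$, and let $x, y$ be the two neighbors of $w$. Since $n\geq 4$, the vertices $x$ and $y$ are nonadjacent, and the short arc between them has interior $\{w\}$. Any $x$-$y$ cut must therefore contain $w$, so a monochromatic $x$-$y$ cut would be colored $c$ and would also need an interior vertex of the long arc colored $c$ --- impossible, as $c$ occurs only at $w$. Thus $\tau$ fails to be an MVD-coloring, a contradiction. Every color class has size at least $2$, so $mvd(G)\leq\left\lfloor\frac{n}{2}\right\rfloor$.

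For the lower bound I would exhibit a coloring with exactly $\left\lfloor\frac{n}{2}\right\rfloor$ colors and verify the criterion above. The natural candidate pairs antipodal vertices: for even $n=2m$, give $v_i$ and $v_{i+m}$ the same color, producing $m=\frac{n}{2}$ classes of size two. Given a nonadjacent pair $x, y$, one shows that at least one antipodal class has its two vertices in distinct arc interiors, so the MVD condition holds; concretely, tracking an endpoint such as $v_1$ (or $v_{n-1}$) together with its antipode, through the two possible positions of $y$ relative to the antipodal map, settles it. For odd $n=2m+1$ I would pair $v_1,\dots,v_{2m}$ antipodally and append the leftover vertex $v_0$ to a suitable class, creating one class of size three while keeping $m=\left\lfloor\frac{n}{2}\right\rfloor$ colors, then re-verify the criterion for the pairs involving $v_0$.

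The routine part is the reformulation and the upper bound; the main obstacle is the case analysis in the lower bound, namely proving that for \emph{every} nonadjacent pair the chosen pairing places some color class across both arcs, with the odd case requiring extra care at the enlarged color class. Combining the two bounds gives $mvd(G)=\left\lfloor\frac{n}{2}\right\rfloor$.
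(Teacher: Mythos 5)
Your proposal is correct and follows essentially the same route as the paper: the lower bound uses the same coloring (your antipodal pairing is exactly the paper's coloring $\tau(v_j)=i$ for $j\equiv i \pmod{\left\lfloor n/2\right\rfloor}$, with the verification likewise left as routine), and your upper bound is the contrapositive of the paper's argument, deriving a contradiction from a singleton color class via the two neighbors of that vertex. The arc-interior reformulation is a clean packaging of the fact that any vertex cut of a cycle must meet both arc interiors, which the paper invokes implicitly through $\kappa(G)=2$.
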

\begin{proof}
Let $G=v_1e_1\ldots v_ne_nv_1$. Define a vertex-coloring $\tau: V(G)\rightarrow \big[\left\lfloor {\frac{n}{2}}\right\rfloor\big]$ such that if $j\equiv i$ $(mod$ $\left\lfloor {\frac{n}{2}}\right\rfloor)$, then $\tau(v_j)=i$, where $i\in\big[\left\lfloor {\frac{n}{2}}\right\rfloor\big]$ and $j\in[n]$. It can be shown that for any two nonadjacent vertices $x$ and $y$, there is a monochromatic $x$-$y$ vertex cut. So $\tau$ is an $MVD$-coloring and $mvd(G)\geq \left\lfloor {\frac{n}{2}}\right\rfloor$.

For $n\geq 4$, if $mvd(G)\geq \left\lfloor {\frac{n}{2}}\right\rfloor+1$ and $\tau$ is an $MVD$-coloring of $G$ with $|\tau(G)|\geq \left\lfloor {\frac{n}{2}}\right\rfloor+1$, then there is a color, say $i$, that colors only one vertex $v_j$. Otherwise, $V(G)\geq 2|\tau(G)| \geq n+1$. Since $\kappa(G)=2$, the monochromatic $v_{j-1}$-$v_{j+1}$ vertex cut must contain $v_j$ and some vertex in $G-\{v_{j-1}, v_j, v_{j+1}\}$, which contradicts the fact that $\tau$ is an $MVD$-coloring.
\end{proof}

\begin{proof}[Proof of Theorem \ref{mini2bound}]We have the following claim:

\textbf{Claim 1}: If $G$ is a 2-connected triangle-free graph and every ear $P_{i}$ $(0\leq i< t)$ has internal vertices, then $mvd(G)\leq \left\lfloor {\frac{n}{2}}\right\rfloor$.

Let $F=\{G_0, G_1, \ldots, G_t\}$ be an ear decomposition of $G$. We use induction on $|F|$. By Lemma \ref{tree}, the theorem holds for $|F|=1$.
If $|F|=t+1>1$, let $\tau$ be an $mvd$-coloring of $G$. Since $|P_{t-1}|\geq 3$, $G_{t-1}$ is a connected vertex induced subgraph of $G$. By Lemma \ref{induced}, $\tau$ restricted on $G_{t-1}$ is an $MVD$-coloring of $G_{t-1}$. By induction, we have
$$|\tau(G_{t-1})|\leq mvd(G_{t-1})\leq \left\lfloor {\frac{|G_{t-1}|}{2}}\right\rfloor=\left\lfloor {\frac{n-|P_{t-1}|+2}{2}}\right\rfloor.$$
Suppose that the endpoints of $P_{t-1}$ are $a$, $b$, and $L$ is the shortest $a,b$-path in $G_{t-1}$. Since $P_{t-1}$ is the last ear, cycle $C=L\cup P_{t-1}$ is a connected vertex induced subgraph of $G$ and $\tau$ restricted on $C$ is an $MVD$-coloring of $C$. Then there are at most $|P_{t-1}|-2$ vertices are assigned colors in $\tau(G)-\tau(G_{t-1})$. Since $|C|\geq 4$, each color in $\tau(G)-\tau(G_{t-1})$ colors at least two internal vertices of $P_{t-1}$. Otherwise, if $j\in \tau(G)-\tau(G_{t-1})$ and only colors one internal vertex of $P_{t-1}$, say $x_{j}$, then $x_{j-1}, x_{j+1}$ are two nonadjacent vertices of $G$ and the monochromatic $x_{j-1}$-$x_{j+1}$ vertex cut must contains $x_{j}$ and another vertex, a contradiction. Then $|\tau(G)-\tau(G_{t-1})|\leq \left\lfloor {\frac{|P_{t-1}|-2}{2}}\right\rfloor$. So,
\begin{equation}
\begin{aligned}
mvd(G)&=|\tau(G)|=|\tau(G_{t-1})|+|\tau(G)-\tau(G_{t-1})|\\
&\leq \left\lfloor {\frac{n-|P_{t-1}|+2}{2}}\right\rfloor+\left\lfloor {\frac{|P_{t-1}|-2}{2}}\right\rfloor \leq\left\lfloor {\frac{n}{2}}\right\rfloor.\nonumber
\end{aligned}
\end{equation}
Above all,  $mvd(G)\leq \left\lfloor {\frac{n}{2}}\right\rfloor$ and the bound is sharp.

By Lemma \ref{internalofPt} and \ref{trianglefree}, if $G$ is not a cycle, then there is an ear decomposition satisfying the conditions in Claim 1. Thus, $mvd(G)\leq \left\lfloor {\frac{n}{2}}\right\rfloor$. If $G$ is a cycle, then by Lemma \ref{tree}, $mvd(G)=\left\lfloor {\frac{n}{2}}\right\rfloor$ and the bound is sharp.
\end{proof}

A $block$ is a maximal connected subgraph of $G$ that has no cut-vertex. Every block of a nontrivial connected graph is either $K_2$ or a 2-connected subgraph, called $trivial$ and $nontrivial$, respectively. To show the connection between $mvd(G)$ and block decomposition, we
need the following result:
\begin{thm}\label{everyblock}
Let $G$ be a connected graph with $r$ blocks. $\tau$ is an $mvd$-coloring of $G$ if and only if $\tau$ restricted on each block is an $mvd$-coloring of each block and the colors of different blocks are different except at the cut-vertices.
\end{thm}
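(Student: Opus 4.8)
The plan is to prove the biconditional by first understanding the structure that blocks impose on monochromatic vertex cuts, and then handling each direction separately. The central observation I would establish first is a structural lemma about where separating cuts live: if $x$ and $y$ are nonadjacent vertices of $G$, then any $x$-$y$ vertex cut must lie inside the unique block containing both $x$ and $y$ when they share a block, and when they lie in different blocks it must contain a cut-vertex on the block-tree path between them. More precisely, since nonadjacency forces $x$ and $y$ into the same block (otherwise they are separated by a single cut-vertex, and a nonadjacent pair always has \emph{some} cut, but the interesting monochromatic structure is internal), I would argue that separating nonadjacent $x,y$ in a common block $B$ can be accomplished entirely within $V(B)$, because vertices outside $B$ attach to $B$ only through cut-vertices and cannot create an $x$-$y$ path avoiding a cut of $B$. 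This localization is what makes the block-wise analysis legitimate.

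For the ``only if'' direction, I would assume $\tau$ is an $mvd$-coloring of $G$ and show two things. First, $\tau$ restricted to each block $B$ is an $MVD$-coloring of $B$: this is essentially Lemma~\ref{induced} applied to the induced subgraph $G[V(B)] = B$, since a block is an induced subgraph. The harder part is upgrading ``$MVD$-coloring'' to ``$mvd$-coloring,'' i.e. showing the restriction uses the \emph{maximum} possible number of colors on $B$; this I expect to follow from a counting/optimality argument combined with the second claim. Second, I would show the colors on distinct blocks are disjoint except at shared cut-vertices. The idea here is that if two different blocks reused a non-cut color, one could recolor to free up an extra color globally while preserving the $MVD$ property on every block (and hence globally, via the localization lemma), contradicting maximality of $\tau$.

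For the ``if'' direction, suppose $\tau$ restricts to an $mvd$-coloring on each block and the color sets are disjoint off the cut-vertices. I would first verify $\tau$ is an $MVD$-coloring of $G$: given nonadjacent $x,y$ in a common block $B$, the block-local $mvd$-coloring supplies a monochromatic $x$-$y$ cut $S \subseteq V(B)$, and by the localization lemma $S$ separates $x$ and $y$ in all of $G$. Then I would verify that no coloring can beat $\tau$, by showing $|\tau(G)| = \big(\sum_{i=1}^r mvd(B_i)\big) - r + 1$ matches the value forced by Theorem~\ref{blockrelation}; since any $mvd$-coloring of $G$ cannot exceed this total and $\tau$ achieves it, $\tau$ is optimal.

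The main obstacle I anticipate is the interaction at cut-vertices in the color-counting, specifically making the disjointness-except-at-cut-vertices condition precise enough that the colors telescope correctly to give the $-r+1$ term. Each cut-vertex is shared between the blocks meeting at it, so its single color is double-counted across those blocks, and tracking this overcounting through the block-tree (to confirm exactly $r-1$ identifications occur, matching the $r-1$ internal edges of the block-cut-vertex tree) is the delicate bookkeeping step. Once that is pinned down, the equivalence of the color count with the formula in Theorem~\ref{blockrelation} closes both the maximality direction and the optimality check simultaneously.
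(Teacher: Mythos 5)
Your overall architecture --- a localization lemma playing the role of the paper's Claim 1, Lemma~\ref{induced} for the easy restriction direction, and recoloring-with-fresh-colors exchange arguments for maximality --- matches the paper's proof closely, but there is one genuine logical flaw. In the ``if'' direction you propose to certify optimality of $\tau$ by appealing to Theorem~\ref{blockrelation}, asserting that ``any $mvd$-coloring of $G$ cannot exceed this total.'' In this paper Theorem~\ref{blockrelation} is proved \emph{from} Theorem~\ref{everyblock} (its proof invokes Theorem~\ref{everyblock} twice, to restrict an $mvd$-coloring to $G'$ and to $B_r$), so citing it here is circular: the upper bound $\bigl(\sum_{i=1}^{r} mvd(B_i)\bigr)-r+1$ on $|\tau'(G)|$ for an arbitrary competing coloring $\tau'$ is precisely what must be established independently. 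The paper avoids this by arguing directly: if $|\tau'(G)|>|\tau(G)|$, then by its Claim 1 each restriction $\tau'_i$ is an $MVD$-coloring of $B_i$, hence $|\tau'_i(B_i)|\leq mvd(B_i)=|\tau_i(B_i)|$, and the forced sharing of each cut-vertex's color across all blocks containing it caps $|\tau'(G)|$ by $\sum_i|\tau'_i(B_i)|-(r-1)$, yielding a contradiction. Happily, your closing paragraph about tracking the $r-1$ identifications through the block-cut-vertex tree is exactly this missing bound; if you carry out that bookkeeping for an \emph{arbitrary} $MVD$-coloring of $G$, not merely for your constructed $\tau$, the circular citation becomes unnecessary and the proof closes.

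A second, smaller slip: your claim that ``nonadjacency forces $x$ and $y$ into the same block'' is false --- vertices in distinct blocks are typically nonadjacent --- but it is harmless, since your parenthetical already supplies the right fact: when $x$ and $y$ lie in different blocks, a single cut-vertex on the block-tree path between them separates them, and a singleton is trivially monochromatic under any coloring. This is the same observation the paper uses in its Claim 1, so with the wording corrected that part of your localization lemma is sound; the lemma's other half (a cut inside the common block $B$ separates in all of $G$, because any escaping $x,y$-path would contradict the maximality of $B$) likewise agrees with the paper's argument.
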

\begin{proof}
Let \{$B_1, \ldots, B_r$\} be the block decomposition of $G$. $\tau$ is a vertex-coloring of $G$ and $\tau_i$ is the coloring of $\tau$ restricted on $B_i$, $i\in [r]$. The theorem holds for $G$ without cut-vertices. Now let $G$ have at least one cut-vertex.

\textbf{Claim 1}: $\tau$ is an $MVD$-coloring of $G$ if and only if $\tau$ restricted on each block is an $MVD$-coloring of each block.

Since each block is a vertex induced subgraph of $G$, the necessity is obvious by Lemma \ref{induced}. Now let $\tau_i$ be an $MVD$-coloring of $B_i$, where $i\in [r]$. For any two nonadjacent vertices $x$ and $y$ in $G$, if there is a block, say $B_1$, which contains both $x$ and $y$, then any monochromatic $x$-$y$ vertex cut in $B_1$ is also a monochromatic $x$-$y$ vertex cut in $G$. Otherwise, there is an $x, y$-path $P\not\subset B_1$ and since $B_1$ is a maximal 2-connected graph, $P$ must be contained in some cycles of $G$. Then $P\subseteq B_1$, a contradiction. If $x$ and $y$ are in different blocks, then there is exactly one internally disjoint $x, y$-path containing at least one cut-vertex $v$. The vertex $v$ is a monochromatic $x$-$y$ vertex cut in $G$.

Next, let $\tau_i$ be an $mvd$-coloring of $B_i$ satisfying that if $B_i\cap B_j=v$, then $\tau_i(B_i)\cap \tau_j(B_j)=\tau_i(v)=\tau_j(v)$ and if $B_i\cap B_j=\varnothing$, then $\tau_i(B_i)\cap \tau_j(B_j)=\varnothing$, where $i, j\in[r]$ and $i\neq j$. Then $\tau$ is an $MVD$-coloring of $G$ by Claim 1. We claim $\tau$ is an $mvd$-coloring of $G$. Otherwise, there is an $mvd$-coloring $\tau'$ of $G$ satisfying $|\tau'(G)|>|\tau(G)|$. Let the coloring of $\tau'$ restricted on $B_i$ be $\tau'_i$, which is an $MVD$-coloring of $B_{i}$ by Claim 1. Then for any $i\in [r]$, $|\tau'_i(B_i)|\leq |\tau_i(B_i)|$, which contradicts $|\tau'(G)|>|\tau(G)|$.

Now to prove the necessity of the theorem. Let $\tau$ be an $mvd$-coloring of $G$. Then $\tau_i$ is an $MVD$-coloring of $B_i$ by Claim 1, where $i\in[r]$. It can be shown that if $B_i\cap B_j=v$, then $\tau_i(B_i)\cap \tau_j(B_j)=\tau_i(v)=\tau_j(v)$ and if $B_i\cap B_j=\varnothing$, then $\tau_i(B_i)\cap \tau_j(B_j)=\varnothing$, where $i, j\in[r]$ and $i\neq j$. We only need to prove that for any $i\in[r]$, $\tau_{i}$ is an $mvd$-coloring of $B_{i}$. Assuming that $\tau_1$ is not an $mvd$-coloring of $B_1$, let $\tau'_1$ be an $mvd$-coloring of $B_1$ satisfying that all colors in $\tau'_1(B_1)$ are unused colors, except for those owned by cut-vertices. Then $|\tau'_1(B_1)|>|\tau_1(B_1)|$ and we get a vertex-coloring $\tau'$ of $G$ with$|\tau'(G)|>|\tau(G)|$. According to the sufficiency of the theorem, $\tau'$ is also an $mvd$-coloring of $G$, which contradicts the maximality of $\tau$.
\end{proof}

\begin{proof}[Proof of Theorem \ref{blockrelation}]
Let $G$ be a connected graph with blocks $B_1, B_2, \ldots , B_r$, and let $\tau$ be an $mvd$-coloring of $G$. We use induction on $r$. The result holds for $r=1$. If $r>1$, then when $G$ is not 2-connected, we know that there is a block, say $B_r$, containing only one cut-vertex, say $v$. Let $G'=G-(V(B_r)-\{v\})$. Then $G'$ is a connected graph with blocks $B_1, B_2,\ldots , B_{r-1}$. By Theorem \ref{everyblock}, $\tau$ restricted on $G'$ is an $mvd$-coloring of $G'$, and combined with induction hypothesis, we have $|\tau(G')|=mvd(G')=(\sum\limits_{i=1}^{r-1} mvd(B_i))-(r-1)+1$.
According to Theorem \ref{everyblock}, $\tau$ restricted on $B_r$ is an $mvd$-coloring of $B_r$ and $\tau(B_r)\cap \tau(G')=\tau(v)$. So we deleted $|mvd(B_r)|-1$ colors from $\tau(G)$ to obtain $\tau(G')$, $mvd(G)$ is as desired.
\end{proof}

\section{ Results for special graphs }
\quad \quad If $G$ and $H$ are vertex-disjoint, then let $G\vee H$ denote the $join$ of $G$ and $H$, which is obtained from $G$ and $H$ by adding edges $\{xy:x\in V(G), y\in V(H)\}$. If $C_{n-1}$ is a cycle of order $n-1$, then $W_{n}=C_{n-1}\vee K_1$ is called a $wheel\ graph$. We first show several classes of graphs with $mvd(G)=k$, where $k\in \{1, 2, n\}$.
\begin{thm}\label{mvd=1}
If $G$ is one of the following graphs, then $mvd(G)=1$.

(i) $G$ is a wheel graph other than $W_4$;

(ii) $G=K_{n_1, \ldots, n_k}$ is a complete $k$-partite graph with $n_k$, $n_{k-1}\geq 2$ and $k>2$.
\end{thm}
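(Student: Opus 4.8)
The plan is to prove $mvd(G)\le 1$ in each case; since Theorem~\ref{bound} already gives $mvd(G)\ge 1$, this yields $mvd(G)=1$. Concretely, I would fix an arbitrary $MVD$-coloring $\tau$ of $G$ and show that $\tau$ must assign the same color to every vertex. In both cases the engine is the same: for a well-chosen nonadjacent pair $x,y$, I identify a set of vertices forced to lie in \emph{every} $x$-$y$ vertex cut, so that the mere existence of a monochromatic $x$-$y$ cut forces all those vertices to share one color; ranging over many such pairs then pins every vertex to a single common color.

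For case (i), write $W_n=C_{n-1}\vee K_1$ with hub $h$ and rim $v_1,\dots,v_{n-1}$ in cyclic order; since $W_4=K_4$ is excluded we have $n\ge5$, so $v_i$ and $v_{i+2}$ (indices mod $n-1$) are nonadjacent. The key observation is that any $v_i$-$v_{i+2}$ vertex cut must contain both $v_{i+1}$ and $h$: if $v_{i+1}$ were retained the path $v_iv_{i+1}v_{i+2}$ survives, and if $h$ were retained the path $v_ihv_{i+2}$ survives. Hence a monochromatic $v_i$-$v_{i+2}$ cut forces $\tau(v_{i+1})=\tau(h)$. Letting $i$ run over all residues shows every rim vertex has the hub's color, so $\tau$ is constant and $mvd(W_n)\le1$.

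For case (ii), let the parts be $V_1,\dots,V_k$ with $|V_i|=n_i$, where two vertices are nonadjacent exactly when they lie in a common part. For a part $V_i$ with $n_i\ge2$ and nonadjacent $x,y\in V_i$, any $x$-$y$ vertex cut must contain all of $V\setminus V_i$: if some $w\in V\setminus V_i$ survived, then $xwy$ would be a surviving path, since $w$ is adjacent to both. Thus a monochromatic $x$-$y$ cut forces $V\setminus V_i$ to be monochromatic. Applying this to the two parts $V_{k-1}$ and $V_k$ of size $\ge2$, both $V\setminus V_{k-1}$ and $V\setminus V_k$ are monochromatic; since $k>2$ these two sets overlap in $V_1\cup\cdots\cup V_{k-2}\ne\varnothing$, so they carry the \emph{same} color, while their union is all of $V$. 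Hence $\tau$ is constant and $mvd(G)\le1$.

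The "surviving path" arguments are immediate from the join/completeness structure, so the routine verifications are light. The one point deserving care---and where the hypotheses are genuinely used---is the overlap step in (ii): the condition $k>2$ is exactly what guarantees $V\setminus V_{k-1}$ and $V\setminus V_k$ share a vertex, and hence a color (for $k=2$ one can two-color the two sides and $mvd$ jumps to $2$), while $n_{k-1},n_k\ge2$ is what makes these two monochromatic constraints available at all. Similarly, the exclusion of $W_4$ in (i) is precisely the threshold below which $v_i,v_{i+2}$ fail to be nonadjacent, so I expect the main subtlety to be bookkeeping the exact role of each hypothesis rather than any deep difficulty.
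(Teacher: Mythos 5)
Your proposal is correct and takes essentially the same route as the paper: in (i) you force $\tau(v_{i+1})=\tau(h)$ through the nonadjacent pair $v_i,v_{i+2}$ and sweep around the rim (the paper does the same iteratively, merely adding an unnecessary appeal to $\kappa(W_n)=3$ and the third disjoint path), and in (ii) you reproduce exactly the paper's Case 3: every cut separating two vertices of a part $V_i$ with $n_i\geq 2$ contains $V(G)\setminus V_i$, and $k>2$ makes $V(G)\setminus V_{k-1}$ and $V(G)\setminus V_k$ overlap while their union is $V(G)$. No gaps; your use of each hypothesis ($W_4=K_4$ excluded, $n_{k-1},n_k\geq2$, $k>2$) matches the paper's.
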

\begin{proof}
\textbf{(1)} Let $G=W_n=C_{n-1}\vee K_1$, where cycle $C_{n-1}=v_1v_2\ldots v_{n-1}v_1$. It is known that $mvd(W_4)=mvd(K_4)=4$. We claim $mvd(W_n)=1$ for $n>4$.

Let $\tau$ be an $mvd$-coloring of $W_{n}$, say $\tau(K_1)=1$. Since $n>4$, $v_1$ and $v_3$ are two nonadjacent vertices with three internally disjoint $v_1,v_3$-paths, namely $v_1v_2v_3$, $v_1K_1v_3$ and $v_1v_{n-1}v_{n-2}\cdots v_4v_3$. Since $\kappa(W_n)=3$, any monochromatic $v_1$-$v_3$ vertex cut must contain vertex set $\{v_2, K_1, v_i\}$, where $v_i\in\{v_{n-1}, v_{n-2},\ldots, v_{4}\}$, so $\tau(v_2)=1$. Similarly, $v_2$ and $v_4$ are two nonadjacent vertices and we get $\tau(v_3)=1$. Repeat operations above till all vertices of $C_{n-1}$ are colored, and we get $\tau(K_1)=\tau(v_1)=\tau(v_2)=\cdots=\tau(v_{n-1})=1$. Therefore, $mvd(W_n)=1$ for $n>4$.

\textbf{(2)} Let $G=K_{n_1, n_2, \ldots, n_k}$ be a complete $k$-partite graph of order $n$, where $k\geq2$ and $1\leq n_1\leq n_2\leq\cdots\leq n_k$. Let $V_1, V_2, \ldots, V_k$ be the vertex-partition sets of $G$ with $|V_i|=n_i$, where $i\in[k]$. There are four cases below.\\
\textbf{Case 1.} $n_i=1$ for $i\in[k]$, then $mvd(G)=mvd(K_n)=n$.\\
\textbf{Case 2.} $n_i=1$ for $i\in[k-1]$, and $n_k\geq2$

Define a vertex-coloring $\tau:V(G)\rightarrow[n-k+2]$ such that $\tau(V_k)\rightarrow[n-k+1]$ and $\tau(V_i)=n-k+2$ for $i\in [k-1]$. If $x$ and $y$ are two nonadjacent vertices in $G$, then $x,\ y\in V_k$, and $V(G)-V_k$ is a monochromatic $x$-$y$ vertex cut in $G$. Thus, $\tau$ is an $MVD$-coloring of $G$ and $mvd(G)\geq n-k+2$. On the other hand, since any two vertices in $V_k$ have $k-1$ internally disjoint paths, according to Theorem \ref{bound}, $mvd(G)\leq n-\kappa^{+}(G)+1= n-k+2$.\\
\textbf{Case 3.} $k>2$ and $n_k\geq n_{k-1}\geq2$

$x$ and $y$ are nonadjacent. If $x,\ y\in V_{k-1}$, then any $x$-$y$ vertex cut must contain $V(G)-V_{k-1}$. So $V(G)-V_{k-1}$ are assigned the same color. Similarly, if $x,\  y\in V_k$, then $V(G)-V_k$ are assigned the same color. Since $k>2$, the sets $V(G)-V_{k-1}$ and $V(G)-V_{k-1}$ intersect. Then $mvd(G)=1$.\\
\textbf{Case 4.} $k=2$, $n_2\geq n_1\geq 2$

Similarly, since $k=2$, the sets $V(G)-V_1$ and $V(G)-V_2$ are disjoint, then $mvd(K_{n_1, n_2})=2$.
\end{proof}

\begin{figure}
\centering
  \includegraphics[height=2.7cm]{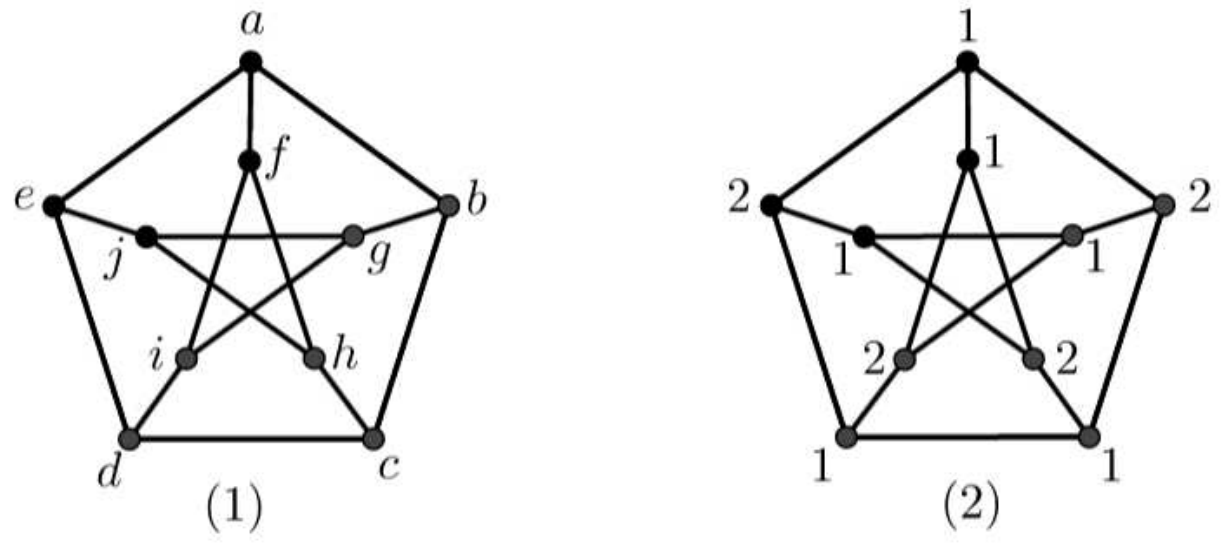}
  \caption{Vertex-coloring of Peterson graph}
  \label{2}
\end{figure}

The $cartesian\ product$ of $G$ and $H$, written as $G\Box H$, is the graph with vertex set $V(G)\times V (H)$ specified by putting $(u, v)$ adjacent to $(u', v')$ if and only if either $u=u'$ and $vv'\in E(H)$, or $v=v'$ and $uu'\in E(G)$. If $P_n$ is a path with order $n$, then $P_{m}\Box P_{n}$ is called the $m$-by-$n$ $grid$.
\begin{thm}\label{mvd=2}
If $G$ is one of the following graphs, then $mvd(G)=2$.

(i) $G=P_m\Box P_n$ is a nontrivial grid other than $P_1\Box P_n$ with $n>2$;

(ii) $G$ is a Petersen graph.
\end{thm}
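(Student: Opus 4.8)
The plan is to prove each value $mvd(G)=2$ by separately establishing $mvd(G)\ge 2$ and $mvd(G)\le 2$, and the engine for the upper bounds will be one simple observation, call it the \emph{common-neighbour rule}: if $\tau$ is an $MVD$-coloring and $x,y$ are nonadjacent, then every common neighbour of $x$ and $y$ lies in every $x$-$y$ vertex cut, so the whole set $N(x)\cap N(y)$ is monochromatic under $\tau$. Indeed, for each $w\in N(x)\cap N(y)$ the path $xwy$ has $w$ as its only internal vertex, so any separator must contain $w$; a monochromatic separator then forces all such $w$ to share one color. This already gives the lower bounds painlessly: a grid is bipartite, so the proper $2$-coloring by the parity of $i+j$ makes every neighbourhood $N(v)$ monochromatic, and since $N(v)$ separates $v$ from each of its non-neighbours, this $2$-coloring is an $MVD$-coloring; hence $mvd(P_m\Box P_n)\ge 2$. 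For the Petersen graph I would exhibit the explicit $2$-coloring of Figure~\ref{2} and verify directly (the graph being small) that every nonadjacent pair is separated by a monochromatic cut, giving $mvd\ge 2$.

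For the grid upper bound I apply the common-neighbour rule to every unit $4$-cycle. The diagonal pair $(i,j),(i+1,j+1)$ is nonadjacent with exactly the two common neighbours $(i,j+1)$ and $(i+1,j)$, forcing $\tau(i,j+1)=\tau(i+1,j)$; the other diagonal forces $\tau(i,j)=\tau(i+1,j+1)$. Writing $f(i,j)=\tau(i,j)$, these say that $f$ is constant along each diagonal direction of every cell, and combining the two relations over adjacent cells yields $f(i,j)=f(i,j+2)=f(i+2,j)$ as well. These equalities connect all vertices of a fixed parity class of $i+j$, so $f$ takes a single value on each of the two bipartition classes; hence at most two colors are used and $mvd(P_m\Box P_n)\le 2$. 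The steps need a second row and a second column, which is exactly why $P_1\Box P_n$ is excluded: it has no cell and is a path with $mvd=n$. Combined with the lower bound this gives part (i).

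For the Petersen graph the common-neighbour rule degenerates, since nonadjacent vertices have a \emph{unique} common neighbour, so a different argument is needed for the upper bound. The key point is that every vertex $v$ is the unique common neighbour of some nonadjacent pair — any two of its three (pairwise nonadjacent, by triangle-freeness) neighbours — so the monochromatic cut separating that pair contains $v$, has color $\tau(v)$, and has size at least $\kappa=3$; thus every color is used on at least three vertices, forcing at most three colors. To exclude exactly three colors, which would force class sizes $3,3,4$, I take a color class $C$ of size $3$. Since $C$ cannot be a triangle, some vertex $p\in C$ has two neighbours $s,t$ outside $C$; their unique common neighbour is $p$, so separating $(s,t)$ needs a monochromatic cut of color $\tau(p)$, which must be all of $C$, and hence $C$ is a cut of size $\kappa$. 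As the Petersen graph is super-connected, $C=N(z)$ for some vertex $z$, and $G-C$ consists of the isolated vertex $z$ together with a connected $6$-cycle. But then the two neighbours of $p$ lying in that $6$-cycle cannot be separated by $C$, contradicting the common-neighbour rule. Hence $mvd\le 2$, and with the lower bound we obtain part (ii).

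The main obstacle is the Petersen upper bound: unlike the grid, where the common-neighbour rule does all the work, here that rule is vacuous and one must instead control monochromatic cuts that need not be minimal. The two ingredients that make the argument go through — that every color must appear at least $\kappa$ times, and that the only minimum cuts are vertex neighbourhoods (super-connectivity) — are what I would justify most carefully; the remaining finite case analysis ruling out the $3,3,4$ partition is then short. I would also double-check the boundary behaviour of the propagation in the grid argument, so that the diagonal relations genuinely reach every vertex of each bipartition class.
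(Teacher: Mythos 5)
Your proof is correct, and for part (ii) it takes a genuinely different route from the paper. For the grid, you are essentially on the paper's path: the paper classifies the $MVD$-colorings of a $4$-cycle (trivial or alternating) and restricts an $mvd$-coloring to every unit cell via Lemma \ref{induced}; your common-neighbour rule yields exactly that classification, and your explicit diagonal propagation $\tau(i,j)=\tau(i+1,j+1)$, $\tau(i,j+1)=\tau(i+1,j)$ composed across adjacent cells makes fully explicit the consistency step that the paper compresses into a one-line contradiction, so your write-up of (i) is if anything the more careful one. For the Petersen graph the paper classifies the $MVD$-colorings of $C_5$ and then chases colors through four explicitly chosen pentagons $G_1,\dots,G_4$ in a case analysis; you instead argue globally: since nonadjacent vertices have a unique common neighbour, every color class must contain a vertex cut and hence have size at least $\kappa=3$, giving at most three colors; a hypothetical class $C$ of size $3$ must itself be a minimum cut, super-connectivity forces $C=N(z)$, and the connectedness of $G-N[z]$ (a $6$-cycle) contradicts the need for $C$ to separate the two neighbours of $p\in C$ lying on that cycle -- a pair you correctly choose so as to avoid $z$, which is also a neighbour of $p$ outside $C$. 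Your approach buys generality (in any triangle-free graph in which nonadjacent vertices have a unique common neighbour, each color class of an $MVD$-coloring has size at least $\kappa$, so the number of colors is at most $n/\kappa$) and avoids ad hoc pentagon-chasing; the paper's approach buys self-containedness, using only Lemma \ref{induced} and the $C_5$ classification.

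The one thing you must actually supply is the super-connectivity of the Petersen graph and the fact that $G-N[z]$ is a $6$-cycle, both of which you assert (and flag) rather than prove. Both are true and follow from short girth-$5$ counting: if $S$ is a $3$-cut whose smallest component $A$ has $|A|=2$ or $3$, counting edges from $A$ into $S$ forces a triangle or a $4$-cycle, so $|A|=1$ and $S=N(z)$; and the non-neighbours of $z$ induce a $2$-regular triangle- and $4$-cycle-free graph on six vertices, i.e.\ $C_6$. Since the identification $C=N(z)$ is the load-bearing step of your upper bound, include this half-page verification (or a precise citation) to close the argument; with it, the proof is complete.
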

\begin{proof}
\textbf{(1)} Let $G=P_m\Box P_n$ and define $x_{i,j}$ to be the vertex in the $i$-th row and $j$-th column, where $i\in [m]$ and $j\in [n]$. It is known that $mvd(P_1\Box P_n)=mvd(P_n)=n$. Then $mvd(P_1\Box P_2)=2$ and $mvd(P_1\Box P_n)>2$ for $n>2$. We claim that $mvd(P_m\Box P_n)=2$ for $m, n\geq2$.

Define a vertex-coloring $\tau$ of $G$: $V(G)\rightarrow [2]$ such that $\tau(x_{i,j})=1$ if $i+j$ is even and $\tau(x_{i,j})=2$ if $i+j$ is odd. For any vertex $x$ in $G$, the set $N(x)$ is monochromatic. Thus, for any two nonadjacent vertices $x$ and $y$ in $G$, $N(x)$ is a monochromatic $x$-$y$ vertex cut. So, $mvd(G)\geq 2$.

Now we prove $mvd(G)=2$. Any $MVD$-coloring of a 4-cycle can have only two cases, one is trivial and the other is to assign colors 1, 2 to the four vertices of the 4-cycle alternately. Suppose that $mvd(G)>2$ and $\tau$ is an $mvd$-coloring of $G$. By Lemma \ref{induced}, $\tau$ restricted on each $4$-cycle $G[x_{i,j}, x_{i,j+1},$ $x_{i+1,j+1}, x_{i+1,j}]$ is an $MVD$-coloring, $1\leq i<m, 1\leq j<n$, which contradicts that $mvd(G)>2$. Therefore, $mvd(P_m\Box P_n)=2$ for $m, n\geq 2$.

\textbf{(2)} Define a vertex-coloring $\tau$ of $G$: $V(G)\rightarrow [2]$ as shown in Fig. \ref{2}$(2)$. For any two nonadjacent vertices $x$ and $y$, there is only one common neighbor, say $z$. Suppose $\tau(z)=t$, it can be shown that the set of all vertices colored by $t$ except $x$ and $y$ is a monochromatic $x$-$y$ vertex cut. Thus, $\tau$ is an $MVD$-coloring of $G$ and $mvd(G)\geq 2$. We prove that $mvd(G)=2$ below.

Any $MVD$-coloring of $C_5$ can only be two cases, one is trivial and the other is to assign colors 1, 2 to the five vertices of $C_5$ alternately. At least two adjacent vertices in $C_5$ have the same color. Suppose that $mvd(G)>2$ and $\tau$ is an $mvd$-coloring of $G$. Let the four 5-cycles of $G$ be $G_1=G[a,b,c,d,e], G_2=G[c,d,i,f,h], G_3=G[a,b,c,h,f]$ and $G_4=G[f,h,j,g,i]$. Since $\tau$ restricted on $G_1$ is an $MVD$-coloring, there are two cases.\\
\textbf{Case 1}. $G_1$ is colored nontrivially.

Suppose that $\tau(a)=\tau(c)=\tau(d)=1, \tau(b)=\tau(e)=2$. $\tau$ restricted on $G_2$ is an $MVD$-coloring. If $G_2$ is colored trivially, i.e., $\tau(f)=\tau(h)=\tau(i)=1$, it is obvious that $\tau$ is not an $MVD$-coloring restricted on $G_3$, which contradicts that $\tau$ is an $MVD$-coloring of $G$. If $G_2$ is colored nontrivially, i.e., $\tau(f)=1$ and $\tau(h)=\tau(i)=2$, then $\tau$ is a nontrivial $MVD$-coloring restricted on $G_4$ with $\tau(g)=\tau(j)=1$, which contradicts that $mvd(G)>2$.\\
\textbf{Case 2}. $G_1$ is colored trivially.

Suppose that $\tau(a)=\tau(b)=\tau(c)=\tau(d)=\tau(e)=1$. Then $\tau$ is a trivial $MVD$-coloring restricted on $G_3$ with $\tau(f)=\tau(h)=1$. Since $\tau$ is an $MVD$-coloring restricted on $G_4$, $|\tau(G_4)|\leq 2$, which contradicts that $mvd(G)>2$.

Above all, $mvd(G)=2$.
\end{proof}

\begin{thm}\label{mvd=n}
Let $G$ be a connected graph of order $n$. Then $mvd(G)=n$ if and only if each block of $G$ is complete.
\end{thm}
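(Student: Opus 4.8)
The plan is to reduce everything to the behavior of $mvd$ on a single block via Theorem \ref{blockrelation}, and then to pin down exactly which blocks attain $mvd(B)=|V(B)|$ using the upper bound of Theorem \ref{bound}. Two elementary observations drive the argument. First, since a coloring assigns at most $|V(B)|$ distinct colors to $B$, we always have $mvd(B)\le |V(B)|$; and for a complete block no two vertices are nonadjacent, so every coloring is vacuously an $MVD$-coloring, giving $mvd(K_m)=m$. Second, a counting identity for the block decomposition: writing $n_i=|V(B_i)|$, one has $\sum_{i=1}^r (n_i-1)=n-1$, equivalently $\sum_{i=1}^r n_i=n+r-1$. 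This follows by induction on $r$ exactly as in the proof of Theorem \ref{blockrelation} (attaching a leaf block along its single cut-vertex adds $n_i-1$ new vertices), or directly from the block-cut tree.

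For the sufficiency direction, suppose every block is complete. If $G$ is $2$-connected or $G=K_2$, then $G$ itself is its only block and is complete, so $mvd(G)=mvd(K_n)=n$ directly. Otherwise $\kappa(G)=1$ and Theorem \ref{blockrelation} applies: using $mvd(B_i)=n_i$ for each complete block together with the counting identity, $mvd(G)=\sum_{i=1}^r mvd(B_i)-r+1=\sum_{i=1}^r n_i-r+1=(n+r-1)-r+1=n$.

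For the necessity direction, assume $mvd(G)=n$. Combining Theorem \ref{blockrelation} with $mvd(B_i)\le n_i$ and the counting identity gives $n=mvd(G)=\sum_{i=1}^r mvd(B_i)-r+1\le \sum_{i=1}^r n_i-r+1=n$, so every inequality is tight and $mvd(B_i)=n_i$ for all $i$. It then remains to show that $mvd(B)=|V(B)|$ forces $B$ to be complete. A trivial block $K_2$ is already complete. If $B$ is a nontrivial (hence $2$-connected) block that is not complete, then $\kappa(B)\ge 2$, and Theorem \ref{bound} yields $mvd(B)\le |V(B)|-\kappa(B)+1\le |V(B)|-1$, contradicting $mvd(B)=|V(B)|$. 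Hence every block is complete.

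The main obstacle is precisely this last implication: extracting structural rigidity (completeness of a block) from the extremal value $mvd(B)=|V(B)|$. The leverage comes entirely from Theorem \ref{bound}, whose $-\kappa(B)+1$ term is the only thing preventing a non-complete $2$-connected block from using the maximum number of colors. The single-block case must be handled separately, since Theorem \ref{blockrelation} is stated only for $\kappa(G)=1$, but there the conclusion is immediate because the unique block equals $G$.
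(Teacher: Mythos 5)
Your proposal is correct and takes essentially the same approach as the paper: both reduce the problem to the block decomposition and then use Theorem \ref{bound} to show that a non-complete nontrivial block $B$ would satisfy $mvd(B)\leq |B|-\kappa(B)+1\leq |B|-1$, contradicting per-block extremality. The only cosmetic difference is that you mediate the block reduction arithmetically through the sum formula of Theorem \ref{blockrelation} together with the identity $\sum_{i=1}^{r}|B_i|=n+r-1$, whereas the paper restricts a rainbow coloring to each block via Theorem \ref{everyblock}; these are interchangeable, as the former is derived from the latter.
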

\begin{proof}
Let \{$B_1, B_2, \ldots , B_r$\} be a block decomposition of $G$. If $B_i\ (i\in [r])$ is complete, we define a coloring $\tau: V(G)\rightarrow [n]$ such that all vertices of $G$ have different colors. By Lemma \ref{blockrelation}, $\tau$ is an $mvd$-coloring of $G$, and $mvd(G)=n$. On the contrary, if $mvd(G)=n$, we define a coloring $\tau: V(G)\rightarrow [n]$ such that all vertices of $G$ have different colors. By Lemma \ref{blockrelation}, $\tau(B_i)$ is an $mvd$-coloring of $B_i$. Then $B_i$ is complete. Otherwise, since $B_i$ is $2$-connected, by Theorem \ref{bound}, $mvd(B_i)\leq |B_i|-\kappa(B_i)+1\leq |B_i|-1$, a contradiction.
\end{proof}

\begin{table}[h]
\caption{$mvd(G)$ for minimally 2-connected graph $G$}\label{6-10}
\scriptsize
\begin{tabular}{p{0.8cm}|p{0.9cm}|p{1.2cm}|p{1.2cm}|p{1.2cm}|p{1.6cm}|p{2.5cm}}
\hline
\hline
$mvd(G)$  & $n\leq5$ & $n$=6 & $n$=7 & $n$=8 & $n$=9& $n$=10 \\
\hline
5 & - & - & - & - & - & $P(4,4)$ \\
\hline
4 & - & - & - & $P(3,3)$ & $P(4,3)$ $P(5,1,1)$ $P(3,3,1)$ & \textcircled{\tiny{48}}-\textcircled{\tiny{50}} $P(5,1,1,1)$ $P(5,2,1)$ $P(4,2,2)$ $P(4,3,1)$ $P(3,3,2)$ $P(6,1,1)$ $P(3,3,1,1)$ \\
\hline
3 & $K_3$ & $P(2,2)$ & $P(3,2)$ $P(3,1,1)$ & \textcircled{\tiny{4}} $P(3,2,1)$ $P(2,2,2)$ $P(4,1,1)$ $P(3,1,1,1)$ & \textcircled{\tiny{12}}-\textcircled{\tiny{16}} $P(3,2,1,1)$ $P(4,2,1)$ $P(3,4*1)$ $P(3,2,2)$ $P(4,1,1,1)$ & \textcircled{\tiny{28}}-\textcircled{\tiny{47}} $P(4,2,1,1)$ $P(3,2,2,1)$ $P(4*2)$ $P(4,4*1)$ $P(3,5*1)$ $P(3,2,3*1)$\\
\hline
2 & $C_4$ $K_2$ $P(2,1)$ $P(1,1,1)$  & $P(2,1,1)$ $P(1,1,1,1)$ & \textcircled{\tiny{1}} $P$(5*1) $P(2,2,1)$ $P(2,1,1,1)$ & \textcircled{\tiny{1}}-\textcircled{\tiny{3}} $P(2,4*1)$ $P(2,2,1,1)$ $P(6*1)$ & \textcircled{\tiny{1}}-\textcircled{\tiny{11}} $P$(7*1) $P(2,5*1)$ $P(2,2,2,1)$ $P(2,2,1,1,1)$ & \textcircled{\tiny{1}}-\textcircled{\tiny{27}} $P(2,2,4*1)$ $P(8*1)$ $P(2,6*1)$ $P(2,2,2,1,1)$\\
\hline
\hline
\end{tabular}
\end{table}

Now, we focus on minimally $2$-connected graphs \cite{Hobbs} of order 10 or less. As an example, see Fig. \ref{3}, let the three 4-cycles of $G$ be $G_1=G[a,b,c,d], G_2=G[a,b,c,e],$ $G_3=G[a,f,c,d]$. According to Lemma \ref{induced}, if $\tau$ is an $mvd$-coloring of $G$, then $\tau(G_1)$ may be (1) or (3), i.e., an $MVD$-coloring of $G_1$. For case (1), $\tau(G_2)$ and $\tau(G_3)$ must be (2). For case (3), $\tau(G_2)$ and $\tau(G_3)$ must be (4). It can be shown that both (2) and (4) are $MVD$-coloring of $G$ and (4) is an $mvd$-coloring of $G$.
Using the same method, after tedious calculations, the $mvd(G)$ of minimally 2-connected graph $G$ is shown in Table \ref{6-10}, and the $mvd$-coloring of $G$ is shown in Appendix A
(In fact, this method is applicable to many graphs with small order).

\begin{figure}[h]
\centering
  \includegraphics[height=1.6cm]{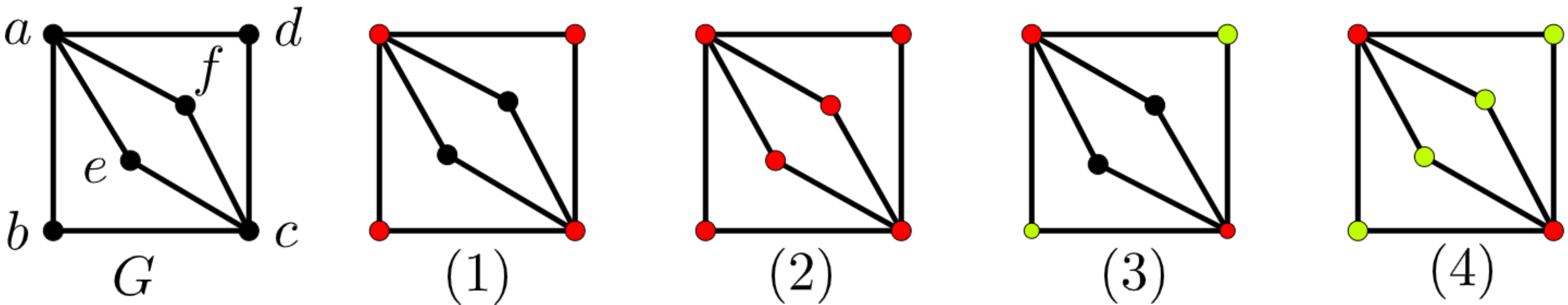}
  \caption{An example}
  \label{3}
\end{figure}

\begin{rem}
Let $P_{n_1}, \ldots, P_{n_k}$ be $k$ disjoint paths with $|P_{n_i}|=n_i$, and let $f(P_{n_i})$ and $l(P_{n_i})$ denote the first and the last vertices of $p_{i}$, respectively. Let $P(n_1, \ldots, n_k)$ be the graph with vertex set $\{\cup_{i\in[k]} V(P_{n_i})\} \cup \{u,\ v\}$ and edge set $\cup_{i\in[k]} \big[E(P_{n_i}) \cup \{f(P_{n_i})u, l(P_{n_i})v\}\big]$, where vertices $u, v\notin \cup_{i\in[k]}$ $V(P_{n_i})$. If $n_{i+1}=\cdots=n_{i+j}$, then we can write $P(n_1,\ldots , n_k)$ in the form $P(n_1,\ldots , n_i, j*n_{i+1}, n_{i+j+1}, \ldots ,n_k)$.
\end{rem}

Finally, when $n-5\leq mvd(G)\leq n$ and all blocks of the graph $G$ are minimally 2-connected triangle-free graphs, we characterize $G$. We need the following lemma:
\begin{lem}\label{blockbound}
$G$ is a connected graph of order $n$ with $r$ blocks, where $t$ blocks are trivial. If all blocks are minimally $2$-connected triangle-free graphs, then $mvd(G)\leq \left\lfloor {\frac{n+2t-r+1}{2}}\right\rfloor$.
\end{lem}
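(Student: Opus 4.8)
The plan is to reduce the global bound to a blockwise computation via Theorem~\ref{blockrelation}, and then control the orders of the blocks by a counting identity. Write the block decomposition as $B_1,\dots,B_r$, where the $t$ trivial blocks are copies of $K_2$ and the remaining $r-t$ nontrivial blocks are minimally $2$-connected triangle-free graphs. First I would record the two per-block estimates that feed into Theorem~\ref{blockrelation}. A trivial block $K_2$ has no pair of nonadjacent vertices, so every coloring is vacuously an $MVD$-coloring and $mvd(K_2)=2$. For a nontrivial block $B_i$, observe that a $2$-connected graph of order $3$ must be $K_3=C_3$, which contains a triangle; hence the triangle-free hypothesis together with minimal $2$-connectedness forces $|B_i|\ge 4$, so Theorem~\ref{mini2bound} applies and gives $mvd(B_i)\le\left\lfloor |B_i|/2\right\rfloor$.

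The key combinatorial ingredient is the identity $\sum_{i=1}^r |B_i| = n+r-1$. I would prove it by induction on $r$ using the block-cut structure. For $r=1$ it is immediate, and for $r>1$ choose a leaf block $B_r$ meeting the rest of $G$ in a single cut-vertex $v$ (such a block exists in the block tree). Deleting $V(B_r)\setminus\{v\}$ leaves a connected graph $G'$ of order $n-(|B_r|-1)$ whose blocks are exactly $B_1,\dots,B_{r-1}$; applying the induction hypothesis to $G'$ and adding back the $|B_r|-1$ removed vertices yields the claim. Intuitively the identity simply records that, summed over all blocks, every non-cut-vertex is counted once while the total over-count at cut-vertices equals $r-1$, the number of edges of the block tree.

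Finally I would assemble the bound. By Theorem~\ref{blockrelation},
\[
mvd(G)=\sum_{i=1}^r mvd(B_i)-r+1 \le 2t+\sum_{\text{nontrivial }B_i}\left\lfloor \frac{|B_i|}{2}\right\rfloor -r+1 .
\]
Bounding $\left\lfloor |B_i|/2\right\rfloor\le |B_i|/2$ and using $\sum_{\text{nontrivial}}|B_i|=(n+r-1)-2t$ from the identity, the right-hand side becomes $2t+\tfrac12(n+r-1-2t)-r+1=\tfrac{n+2t-r+1}{2}$. Since $mvd(G)$ is an integer, this yields $mvd(G)\le\left\lfloor (n+2t-r+1)/2\right\rfloor$, as required. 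The only genuinely delicate point is the counting identity $\sum_i|B_i|=n+r-1$ together with the bookkeeping of cut-vertex multiplicities; once that is established, every remaining step is a substitution, and the final rounding is justified purely by the integrality of $mvd(G)$.
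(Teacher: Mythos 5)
Your proof is correct and takes essentially the same approach as the paper: the counting identity $\sum_{i=1}^{r}|B_i| = n+r-1$ established by induction on a leaf block, combined with Theorem~\ref{blockrelation} and the bound $mvd(B_i)\leq\left\lfloor |B_i|/2\right\rfloor$ from Theorem~\ref{mini2bound}. The only cosmetic differences are at the end, where you use $\left\lfloor x\right\rfloor\leq x$ together with the integrality of $mvd(G)$ in place of the paper's floor-sum manipulation, and your explicit check that a nontrivial block has order at least $4$ (so Theorem~\ref{mini2bound} applies), a detail the paper leaves implicit.
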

\begin{proof}
We claim that a connected graph $G$ with blocks $B_1, B_2, \ldots , B_r$ has $(\sum\limits_{i=1}^{r} |B_i|)-r+1$ vertices. The proof proceeds by induction on $r$. The result holds for $r=1$. If $r>1$, then when $G$ is not $2$-connected, we know that there is a block, say $B_r$, containing only one cut-vertex, say $v$. Let $G'=G-(V(B_r)-\{v\})$. Then $G'$ is a connected graph with blocks $B_1, B_2,\ldots , B_{r-1}$. By the induction hypothesis, $|G'|=(\sum\limits_{i=1}^{r-1} |B_i|)-(r-1)+1$. Since we deleted $|B_r|-1$ vertices from $G$ to obtain $G'$, the number of vertices in $G$ is as desired.

Without loss of generality, let the trivial blocks be $B_1, \ldots , B_t$, and let the nontrivial blocks be $B_{t+1}, \ldots , B_{r}$. By Theorem \ref{mini2bound} and \ref{blockrelation}, we have
\begin{equation}
\begin{aligned}
mvd(G)&=(\sum\limits_{i=1}^{r} mvd(B_{i}))-r+1\leq 2t+\left\lfloor {\frac{|B_{t+1}|}{2}}\right\rfloor+\ldots +\left\lfloor {\frac{|B_{r}|}{2}}\right\rfloor-r+1\\
&\leq \left\lfloor {\frac{2t+|B_{t+1}|+\ldots +|B_{r}|}{2}}\right\rfloor+t-r+1=\left\lfloor {\frac{n+r-1}{2}}\right\rfloor+t-r+1\\
&=\left\lfloor {\frac{n+2t-r+1}{2}}\right\rfloor.\nonumber
\end{aligned}
\end{equation}
\end{proof}

\begin{figure}
\centering
  \includegraphics[height=8.7cm]{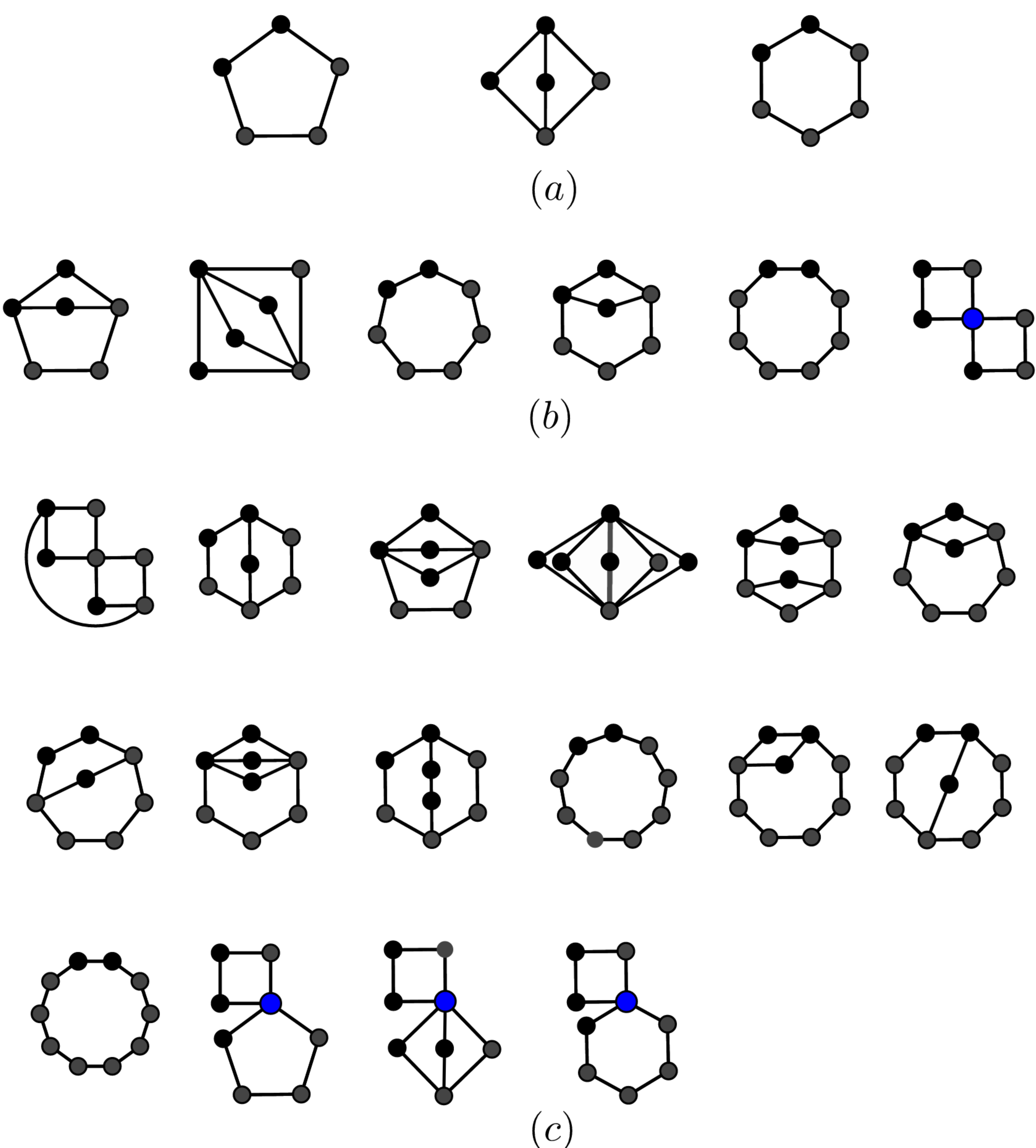}
  \caption{Results for Theorem \ref{large}}
  \label{4}
\end{figure}
Suppose that the subgraph induced by all nontrivial blocks of $G$ is isomorphic to exactly one of the graphs in Fig. \ref{4}$(a)$, then we define $\mathscr{A}=\{G\}$. Similarly, we define $\mathscr{B}$ and $\mathscr{C}$ according to Fig. \ref{4}$(b)$ and $(c)$. The blue vertex indicates that two blocks may or may not be adjacent to each other.

\begin{thm}\label{large}
For a connected graph $G$, if blocks in $G$ are all minimally $2$-connected triangle-free graphs, then
\begin{equation}
    mvd(G)=
   \begin{cases}
   n,&\mbox{ $\Leftrightarrow G$ is a tree,}\\
   n-1,&\mbox{ $\Leftrightarrow G=\varnothing$,}\\
   n-2,&\mbox{ $\Leftrightarrow G$ is a unicycle graph with cycle $C_4$,}\\
   n-3,&\mbox{ $\Leftrightarrow G\in \mathscr{A}$,}\\
   n-4,&\mbox{ $\Leftrightarrow G\in \mathscr{B}$,}\\
   n-5,&\mbox{ $\Leftrightarrow G\in \mathscr{C}$.}\nonumber
   \end{cases}
  \end{equation}
\end{thm}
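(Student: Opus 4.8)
The plan is to reduce the whole statement to a single additive identity over the blocks of $G$ and then to a bounded enumeration. Writing $B_1,\dots,B_r$ for the blocks and combining Theorem \ref{blockrelation} with the vertex-count identity $n=(\sum_{i=1}^{r}|B_i|)-r+1$ established inside the proof of Lemma \ref{blockbound}, I would first derive the master formula
\begin{equation}
n-mvd(G)=\sum_{i=1}^{r}\bigl(|B_i|-mvd(B_i)\bigr).\nonumber
\end{equation}
Thus it suffices to control $c(B):=|B|-mvd(B)$ for each block. A trivial block is $K_2$ with $mvd=2$, so $c(K_2)=0$; a nontrivial block is minimally $2$-connected and triangle-free of order $\ge 4$, so by Theorem \ref{mini2bound} we have $c(B)\ge |B|-\lfloor |B|/2\rfloor=\lceil |B|/2\rceil\ge 2$. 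Consequently $n-mvd(G)$ equals the sum, over the nontrivial blocks only, of the numbers $c(B)$, each at least $2$.

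Setting $d:=n-mvd(G)$, the case analysis becomes a bounded partition problem: for each $d\in\{0,1,2,3,4,5\}$ I would enumerate the multisets of nontrivial-block contributions summing to $d$ with every part $\ge 2$. This instantly gives $d=0\Leftrightarrow$ no nontrivial block $\Leftrightarrow G$ is a tree, and $d=1$ is impossible $\Leftrightarrow$ no such graph exists (so $G=\varnothing$). The remaining partitions are: $d=2$ one block with $c=2$; $d=3$ one block with $c=3$; $d=4$ one block with $c=4$ or two blocks with $c=2$; $d=5$ one block with $c=5$ or blocks with contributions $\{2,3\}$. The crucial finiteness input is that $mvd(B)\le\lfloor |B|/2\rfloor$ forces $|B|\le 2\,c(B)\le 2d\le 10$, so every nontrivial block that can occur has order at most $10$ and is therefore listed in Table \ref{6-10}.

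Next I would read off from Table \ref{6-10} exactly which minimally $2$-connected triangle-free graphs realize each required contribution: $c=2$ forces $|B|=4$, i.e. $B=C_4$; $c=3$ forces $B$ of order $5$ (namely $C_5=P(2,1)$ or $K_{2,3}=P(1,1,1)$) or order $6$ (namely $C_6=P(2,2)$); and the lists for $c=4$ and $c=5$ are obtained the same way from the entries with $mvd=|B|-4$ over $6\le|B|\le 8$ and $mvd=|B|-5$ over $7\le|B|\le 10$. Finally I would translate each admissible configuration of nontrivial blocks into a family of graphs $G$: the nontrivial blocks are glued to arbitrary trees through cut-vertices, so the subgraph induced by the nontrivial blocks is precisely the invariant that pins down $mvd(G)$, and matching these induced subgraphs against Fig.~\ref{4} yields $G\in\mathscr{A},\mathscr{B},\mathscr{C}$ for $d=3,4,5$. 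When $d\in\{4,5\}$ produces two nontrivial blocks, those blocks can either share a single cut-vertex or be vertex-disjoint and linked through a path of trivial blocks; this dichotomy is exactly what the blue vertex of Fig.~\ref{4} records. Both directions of each equivalence follow immediately from the master formula, since it shows $mvd(G)=n-d$ holds if and only if the nontrivial-block contributions sum to $d$.

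I expect the main obstacle to be the completeness of the block enumeration for $c=4$ and $c=5$: one must be certain that Table \ref{6-10} lists \emph{every} minimally $2$-connected triangle-free graph of the relevant order together with its exact $mvd$-value, since a single omission would add a missing family to $\mathscr{B}$ or $\mathscr{C}$. The rest, in particular the bookkeeping of how one or two nontrivial blocks attach to the trivial part and correspond to the pictures in Fig.~\ref{4} (including the shared-versus-disjoint blue-vertex cases), is routine once the master formula and the order bound $|B|\le 2d$ are in place.
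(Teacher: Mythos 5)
Your proposal is correct, and it takes a genuinely cleaner route than the paper. The paper first proves Lemma \ref{blockbound} (the bound $mvd(G)\leq\lfloor\frac{n+2t-r+1}{2}\rfloor$) and then runs a two-case analysis on the parity of $n-r$, enumerating the possible values of $r$ down from $n-1$, squeezing $t$ in each case, and deriving contradictions from the vertex count $n=(\sum_i|B_i|)-r+1$ to pin down the number and orders of the nontrivial blocks before consulting Table \ref{6-10}. Your master identity $n-mvd(G)=\sum_i\bigl(|B_i|-mvd(B_i)\bigr)$, obtained by subtracting that same vertex count from Theorem \ref{blockrelation}, collapses all of this: trivial blocks contribute $c(K_2)=0$, each nontrivial block contributes $c(B)\geq\lceil|B|/2\rceil\geq 2$ by Theorem \ref{mini2bound} (triangle-freeness rules out $C_3$, so $|B|\geq 4$), and the whole theorem becomes an enumeration of partitions of $d=n-mvd(G)\leq 5$ into parts at least $2$, with the order bound $|B|\leq 2c(B)\leq 2d\leq 10$ falling out in one line rather than being reconstructed case by case. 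A further advantage is that your formula is an exact identity valid for every graph satisfying the hypothesis, so both directions of each equivalence come for free, whereas the paper must argue sufficiency separately via Theorem \ref{everyblock} and realizability of the gluings. Both arguments rest on exactly the same external inputs --- Theorems \ref{blockrelation} and \ref{mini2bound} and the completeness of Table \ref{6-10} (Hobbs' catalogue of minimal blocks through order $10$, restricted to triangle-free ones, with their exact $mvd$-values) --- and you correctly identify that completeness as the one non-routine dependency: your partition lists for $c(B)\in\{2,3,4,5\}$ match the paper's ($C_4$ alone for $c=2$; $C_5$, $K_{2,3}$, $C_6$ for $c=3$; and so on), and the shared-cut-vertex versus disjoint dichotomy for two-block configurations is precisely what the blue vertex in Fig.~\ref{4} encodes. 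One small caution in the final matching step: the classes $\mathscr{A},\mathscr{B},\mathscr{C}$ are defined through the subgraph induced by all nontrivial blocks, which carries slightly more information than the multiset of blocks your identity sees (e.g.\ whether two disjoint nontrivial blocks are joined by a single trivial-block edge, which then appears in the induced subgraph); since $d$ is blind to this, your argument shows all such variants yield the same $mvd$, and the figure's conventions are what absorb them into a single picture --- bookkeeping, as you say, but worth stating when writing the proof out in full.
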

\begin{proof}
By Table \ref{6-10}, Theorem \ref{blockrelation} and \ref{everyblock}, it is easy to verify the sufficiency. Now we prove the necessity. If $mvd(G)=n$, by Theorem \ref{mvd=n}, $B_i$ is complete. Since $G$ is triangle-free, $G$ is a tree.

Now suppose $mvd(G)\geq n-5$ and $G$ has $r$ blocks, of which $t$ are trivial. By Lemma \ref{blockbound}, $n-5\leq mvd(G)\leq \left\lfloor {\frac{n+2t-r+1}{2}}\right\rfloor$. There are two cases below.\\
\textbf{Case 1.} $n-r$ is even.

So $2t\geq n+r-10$. Since $t\leq r$, $r\geq n-10$, then $r$ may be $n-2,\ n-4,\ n-6,\ n-8$ or $n-10$.

When $r=n-2$, combined with $2t\geq n+r-10$ and the fact that $r=t$ if and only if $G$ is a tree (i.e., $r=n-1$), we have $n-6\leq t\leq n-3$. Since $G$ is triangle-free, when $t=n-3, n-4, n-5$ or $n-6$, respectively, we have $\sum\limits_{i=1}^{r} |B_{i}|\geq 4+2(n-3), 8+2(n-4), 12+2(n-5)$ or $16+2(n-6)$, contradicting the fact that $n=(\sum\limits_{i=1}^{r} |B_{i}|)-r+1$. Note that in other cases, we first use this method to determine the number of nontrivial blocks in $G$.

Similarly, when $r=n-4$, $n-7\leq t\leq n-5$ and there is only one nontrivial block in $G$. Since $n=(\sum\limits_{i=1}^{r} |B_{i}|)-r+1$, this nontrivial block is of order $5$, i.e., $P(2,1)$ or $P(1,1,1)$. According to Table \ref{6-10}, Theorem \ref{blockrelation} and \ref{everyblock}, $mvd(G)=n-3$ in both cases.

When $r=n-6$, $n-8\leq t\leq n-7$. Since $n=(\sum\limits_{i=1}^{r} |B_{i}|)-r+1$, if $t=n-7$, then the order of this nontrivial block is $7$; if $t=n-8$, then there are two nontrivial blocks $B_{i}$ and $B_{j}$ with $|B_{i}|+|B_{j}|=9$. Since $G$ is triangle-free, the subgraph induced by all nontrivial blocks of $G$ is one of the graphs in Fig. \ref{8}. According to Table \ref{6-10}, Theorem \ref{blockrelation} and \ref{everyblock}, $mvd(G)=n-4$ when the subgraph induced by all nontrivial blocks of $G$ is $P(3,2)$ or $P(3,1,1)$, and $mvd(G)=n-5$ for the rest cases.

\begin{figure}
\centering
  \includegraphics[height=8.2cm]{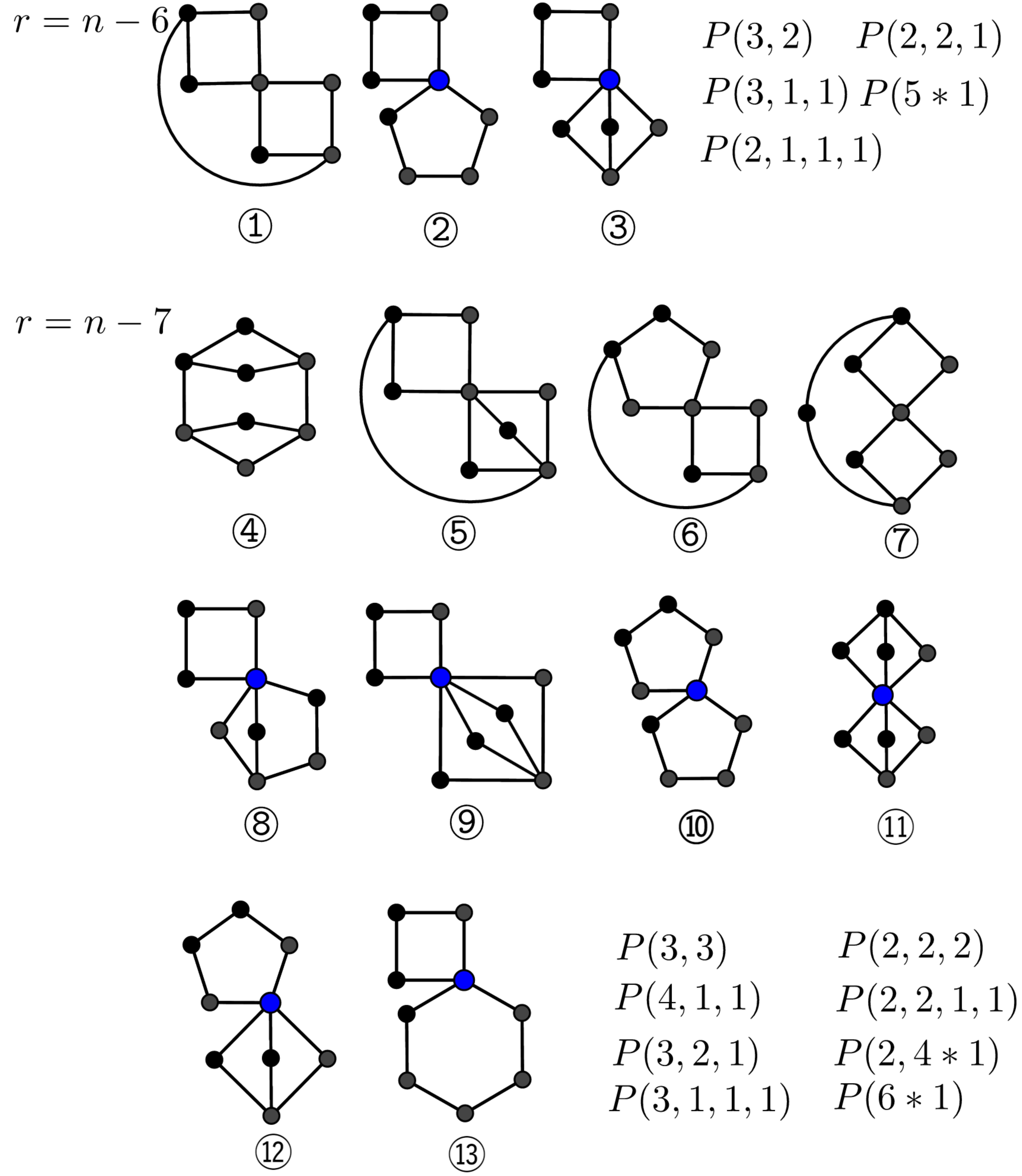}
  \caption{Proof process of Theorem \ref{large}}
  \label{8}
\end{figure}

When $r=n-8$, $t=n-9$. Since $n=(\sum\limits_{i=1}^{r} |B_{i}|)-r+1$, this nontrivial block is of order $9$, i.e., one of the graphs in Appendix A. $9$ VERTEX. According to Table \ref{6-10}, Theorem \ref{blockrelation} and \ref{everyblock}, $mvd(G)=n-5$ when the subgraph induced by all nontrivial blocks of $G$ is one of $\{P(4,3), P(5,1,1), P(3,3,1)\}$, and $mvd(G)<n-5$ for the rest cases.

When $r=n-10$, $n-10\leq t\leq n-11$, a contradiction.\\
\textbf{Case 2.} $n-r$ is odd.

So $2t\geq n+r-11$. Since $t\leq r$, $r\geq n-11$, then $r$ may be $n-1,\ n-3,\ n-5,\ n-7,\ n-9$ or $n-11$.

When $r=n-1$, $G$ is a tree and $mvd(G)=n$.

When $r=n-3$, combined with $2t\geq n+r-11$ and the fact that $r=t$ if and only if $G$ is a tree (i.e., $r=n-1$), $n-7\leq t\leq n-4$. Since $G$ is triangle-free, when $t=n-5, n-6, n-7$, respectively, $\sum\limits_{i=1}^{r} |B_{i}|\geq 8+2(n-5), 12+2(n-6), 16+2(n-7)$, contradicting the fact that $n=(\sum\limits_{i=1}^{r} |B_{i}|)-r+1$. Then $G$ has exactly one nontrivial block. Note that in other cases, we first use this method to determine the number of nontrivial blocks in $G$. Since $n=(\sum\limits_{i=1}^{r} |B_{i}|)-r+1$, this nontrivial block is of order $4$, i.e., $C_{4}$. According to Table \ref{6-10}, Theorem \ref{blockrelation} and \ref{everyblock}, $mvd(G)=n-2$.

Similarly, when $r=n-5$, we have $n-8\leq t\leq n-6$ and there are at most two nontrivial blocks in $G$. Since $n=(\sum\limits_{i=1}^{r} |B_{i}|)-r+1$ and $G$ is triangle-free, if $t=n-6$, then this nontrivial block is of order $6$, i.e., one of $\{P(2,2), P(2,1,1), P(1,1,1,1)\}$; if $t=n-7$, then there are two nontrivial blocks $B_{i}$ and $ B_{j}$ with $|B_{i}|+|B_{j}|=8$, i.e., both $B_{i}$ and $ B_{j}$ are $C_{4}$. According to Table \ref{6-10}, Theorem \ref{blockrelation} and \ref{everyblock}, $mvd(G)=n-3$ when the subgraph induced by all nontrivial blocks of $G$ is $P(2,2)$, and $mvd(G)=n-4$ for the rest cases.

When $r=n-7$, $n-9\leq t\leq n-8$. Since $n=(\sum\limits_{i=1}^{r} |B_{i}|)-r+1$, if $t=n-8$, then the order of this nontrivial block is $8$; if $t=n-9$, then there are two nontrivial blocks $B_{i}$ and $ B_{j}$ with $|B_{i}|+|B_{j}|=10$. Since $G$ is triangle-free, the subgraph induced by all nontrivial blocks of $G$ is one of the graphs in Fig. \ref{8}. According to Table \ref{6-10}, Theorem \ref{blockrelation} and \ref{everyblock}, $mvd(G)=n-4$ when the subgraph induced by all nontrivial blocks of $G$ is $P(3,3)$, $mvd(G)=n-5$ when it is one of $\{\Large{\textcircled{\small {4}}}, \Large{\textcircled{\small {13}}}, P(4,1,1), P(3,2,1), P(3,1,1,1), P(2,2,2)\}$, and $mvd(G)=n-6$ for the rest cases.

When $r=n-9$, $t=n-10$. Since $n=(\sum\limits_{i=1}^{r} |B_{i}|)-r+1$, this nontrivial block is of order $10$, i.e., one of the graphs in Appendix A. $10$ VERTEX. According to Table \ref{6-10}, Theorem \ref{blockrelation} and \ref{everyblock}, $mvd(G)=n-5$ when the subgraph induced by all nontrivial blocks of $G$ is $P(4,4)$, and $mvd(G)<n-5$ for the rest cases.

When $r=n-11$, $n-11\leq t\leq n-12$, a contradiction.
\end{proof}
\section{Erd\H{o}s-Gallai-type problems}
\quad \quad In this section, we first study the following extremal problem and obtain Theorem \ref{size}. To solve this problem, we show some lemmas.

For integers $k$ and $n$ with $1\leq k\leq n$, what is the maximum possible size of a connected graph $G$ of order $n$ with $mvd(G)=k$?

\begin{lem}\label{add}
For a connected graph $G$ of order $n$, the following holds:

(i) If $G$ is obtained by adding $k$ edges to $K_{n-1}$ from a vertex $v$ outside $K_{n-1}$, where $k\in [n-2]$, then $mvd(G)=n-k+1$.

(ii)If $G$ is obtained by removing any edge $e$ from $K_{n}$, where $n\geq3$, then $mvd(G)=3$; if $G$ is obtained by removing any two edges $e_1$ and $e_2$ from $K_{n}$, where $n\geq4$, then $mvd(G)\leq 4$.
\end{lem}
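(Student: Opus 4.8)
The plan is to prove each of the three statements in Lemma~\ref{add} by combining the block-decomposition machinery (Theorems~\ref{blockrelation} and~\ref{everyblock}) with the connectivity bound of Theorem~\ref{bound} and the complete-block characterization of Theorem~\ref{mvd=n}. I would treat the two parts of (i) and (ii) separately, since each corresponds to a different structural regime.

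\textbf{Part (i).} Here $G$ is $K_{n-1}$ together with a vertex $v$ joined to exactly $k$ of its vertices. First I would settle the extreme cases: if $k=1$, then $v$ is a pendant vertex, the edge $vw$ is a trivial block $K_2$, and the rest is the complete block $K_{n-1}$; by Theorem~\ref{mvd=n} every block is complete, so $mvd(G)=n=n-k+1$, as claimed. For $2\le k\le n-2$ the graph is $2$-connected, so I would argue directly. For the lower bound I would exhibit an explicit coloring: color the $k$ neighbors of $v$ with one repeated color, say color $1$, and give all remaining $n-k-1$ vertices of $K_{n-1}$ together with $v$ distinct new colors, using $1+(n-k-1)+1=n-k+1$ colors in total. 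The only nonadjacent pairs are $v$ and the $n-1-k$ non-neighbors of $v$; the $k$ common neighbors (all colored $1$) form a monochromatic cut separating them, so the coloring is an $MVD$-coloring and $mvd(G)\ge n-k+1$. For the upper bound I would note that any two non-neighbors of $v$ inside $K_{n-1}$ are joined through all other $n-2$ vertices, and more usefully that $\kappa^{+}(G)=k$ is realized by the pair $v,w$ where $w$ is a non-neighbor of $v$: the minimum $v$-$w$ cut is exactly the set of $k$ neighbors of $v$. Theorem~\ref{bound} then gives $mvd(G)\le n-\kappa^{+}(G)+1=n-k+1$, matching the lower bound.

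\textbf{Part (ii).} For $K_n-e$ with $e=xy$, the graph is $(n-2)$-connected and the only nonadjacent pair is $x,y$, whose unique minimum cut is the remaining $n-2$ vertices. Theorem~\ref{bound} gives $mvd\le n-(n-2)+1=3$. For the matching lower bound I would color $x,y$ with two distinct colors and the common neighborhood with a single third color; since the $n-2$ common neighbors form a monochromatic $x$-$y$ cut, this is an $MVD$-coloring using $3$ colors, so $mvd(K_n-e)=3$. For $K_n-\{e_1,e_2\}$ I only need the upper bound $mvd\le 4$, so it suffices to produce, for each of the two possible configurations of the removed edges, a short argument bounding the number of colors. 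The two configurations are when $e_1,e_2$ share an endpoint and when they are disjoint. In the shared-endpoint case the nonadjacent pairs all pass through an $(n-3)$-element common cut, and Theorem~\ref{bound} via $\kappa^{+}$ forces few colors; in the disjoint case there are two nonadjacent pairs, and I would show that the monochromatic cuts they demand overlap enough to cap the color count at $4$.

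The main obstacle I anticipate is the last case, $K_n-\{e_1,e_2\}$ with $e_1,e_2$ disjoint, since then there are two distinct nonadjacent pairs each imposing its own monochromatic-cut constraint, and these constraints interact. Concretely, with $e_1=x_1y_1$ and $e_2=x_2y_2$ disjoint, the pair $x_1,y_1$ needs a monochromatic cut contained in $V(G)\setminus\{x_1,y_1\}$ and similarly for $x_2,y_2$; because the two cut-complements overlap in the $n-4$ ``universal'' vertices, I expect to argue that at most one new color beyond the endpoints can be freely used, yielding the bound $4$. Pinning down precisely which vertex sets can receive distinct colors while still supplying both monochromatic cuts is the delicate book-keeping step, and it is where Theorem~\ref{bound} alone is not quite sharp enough, so I would supplement it with a direct case analysis on how the four endpoints $x_1,y_1,x_2,y_2$ may be colored.
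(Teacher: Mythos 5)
Your part (i) and the $K_n-e$ half of part (ii) are correct and essentially coincide with the paper's proof: you use the same coloring (one repeated color on $N(v)$, fresh colors on $\{v\}\cup(V(G)\setminus N(v))$) and the same upper bound via Theorem \ref{bound} applied to a non-neighbor of $v$; the paper merely folds $K_n-e$, and also the adjacent-edges case of part (ii), back into part (i) by viewing $G$ as $K_{n-1}$ plus a vertex of degree $n-2$ (resp. $n-3$), while you argue those two cases directly with $\kappa^{+}$ --- both routes are fine, and your treatment of $k=1$ via Theorem \ref{mvd=n} is a harmless alternative to the paper's uniform coloring. The genuine divergence is the disjoint-edges case, and there the obstacle you anticipate dissolves: with $e_1=x_1y_1$ and $e_2=x_2y_2$ disjoint, \emph{every} vertex of $V(G)\setminus\{x_1,y_1\}$ is adjacent to both $x_1$ and $y_1$, so $\kappa(x_1,y_1)=n-2$, hence $\kappa^{+}(G)=n-2$ and Theorem \ref{bound} alone already gives $mvd(G)\leq n-(n-2)+1=3\leq 4$. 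No book-keeping on the colors of the four endpoints is needed, and your remark that Theorem \ref{bound} ``is not quite sharp enough'' has it backwards: the nonadjacent pairs here have \emph{large} local connectivity, which is exactly when Theorem \ref{bound} bites hardest. The paper takes a third route for this case: it identifies $K_n$ minus two disjoint edges as the complete multipartite graph $K_{2,2}$ ($n=4$) or $K_{2,2,1,\ldots,1}$ ($n>4$) and quotes Theorem \ref{mvd=1}(2) to get the exact values $mvd(G)=2$ or $1$; that buys more than the stated bound $\leq 4$ --- the exact value $mvd(G)=1$ for $n\geq 5$ is what is silently reused later in the proof of Theorem \ref{size} --- whereas your connectivity argument, once corrected as above, is a one-line proof of the inequality actually claimed. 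So your proposal is correct in every case you carried out, and the one case you left open closes immediately with the very theorem you cite.
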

\begin{proof}
\textbf{(1)} Defined a vertex-coloring $\tau$ of $G$: $V(G)\rightarrow[n-k+1]$ such that $\tau(G-N(v))\rightarrow[n-k]$ and $\tau(N(v))=n-k+1$. If $x$ and $y$ are two nonadjacent vertices in $G$, then either $x$ or $y$ is $v$, say $v=x$. Since $\tau(N(v))=n-k+1$, $N(v)$ is a monochromatic $v$-$y$ vertex cut and $\tau$ is a $MVD$-coloring of $G$. So $mvd(G)\geq n-k+1$. On the contrary, let $u$ be a vertex that is nonadjacent to $v$. Then there are $k$ internally disjoint paths between $u$ and $v$. So, $mvd(G)\leq n-k+1$ by Theorem \ref{bound}.

\textbf{(2)} Let $v$ be one of the endpoints of $e$, then $G$ is obtained by adding $n-2$ edges from $v$ to $K_{n-1}$. From (1), $mvd(G)=3$.
If $e_1$ and $e_2$ are adjacent edges incident with a vertex $v$ of $G$, then $G$ is obtained by adding $n-3$ edges from $v$ to $K_{n-1}$. So $mvd(G)=4$. If $e_1$ and $e_2$ are nonadjacent, then $G=K_{2, 2}$ when $n=4$, or $G=K_{2, 2, 1, \ldots, 1}$ when $n>4$. According to Theorem \ref{mvd=1}(2), $mvd(G)=2$ or $mvd(G)=1$.
\end{proof}

\begin{lem}\label{k=2}
Let $G$ be a connected graph of order $n\geq 2$. Then the maximum size of $G$ with $mvd(G)=2$ is
\begin{equation}
    |E(G)|_{max}=
   \begin{cases}
   1,&\mbox{ $n=2$,}\\
   -\infty,&\mbox{ $n=3$,}\\
   4,&\mbox{ $n=4$,}\\
   7,&\mbox{ $n=5$,}\\
   \frac{n(n-1)}{2}-4,&\mbox{ $n\geq 6$.}\nonumber
   \end{cases}
\end{equation}
\end{lem}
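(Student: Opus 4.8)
The plan is to handle the tiny ranges $n\in\{2,3,4\}$ directly and, for $n\ge 5$, to reduce everything to understanding which graphs obtained from $K_n$ by deleting only a few edges can possibly have $mvd=2$. The one tool that drives the whole argument is an elementary observation I would isolate first: if $\tau$ is any $MVD$-coloring of $G$ and $x,y$ are nonadjacent, then every $x$-$y$ vertex cut must contain all of $N(x)\cap N(y)$, because each common neighbour $c$ lies on the path $xcy$, which can only be broken at $c$. Consequently the monochromatic $x$-$y$ cut guaranteed by $\tau$ forces $N(x)\cap N(y)$ to be monochromatic. Thus in every $MVD$-coloring the common neighbourhood of each nonadjacent pair is monochromatic; I will use this both to bound $mvd$ from above on dense graphs and to verify the extremal constructions.

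The small cases and the upper bound form the negative direction. For $n=2$ only $K_2$ exists, it is complete, $mvd=2$, and the maximum size is $1$; for $n=3$ the only connected graphs are $P_3$ and $K_3$, both with $mvd=3$, so no graph attains $mvd=2$ and the value is $-\infty$; for $n=4$ the cycle $C_4$ has $mvd=\lfloor 4/2\rfloor=2$ by Lemma \ref{tree} with $4$ edges, while the denser connected graphs $K_4-e$ and $K_4$ have $mvd=3$ and $4$ by Lemma \ref{add} and Theorem \ref{mvd=n}, so the maximum is $4$ (here $C_4=K_{2,2}$ realises $mvd=2$ with two deletions only because $K_{2,2}$ still has two parts, a coincidence that fails once $n\ge5$). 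For $n\ge5$ I would classify $G$ by the number of edges of its complement: $0$ gives $mvd=n$; $1$ gives $mvd=3$ by Lemma \ref{add}(ii); $2$ gives either $mvd=4$ (two adjacent deletions, Lemma \ref{add}(ii)) or, since now at least three parts appear, $G=K_{2,2,1,\dots,1}$ with $mvd=1$ by Theorem \ref{mvd=1}(ii); and for $n\ge 6$ I would run through the five isomorphism types of a $3$-edge complement, namely $K_3,\ P_4,\ K_{1,3},\ P_3\cup K_2,\ 3K_2$, obtaining $mvd\in\{4,3,5,1,1\}$ respectively (the multipartite types via Theorem \ref{mvd=1}(ii), the rest via the common-neighbourhood condition). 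Since none of these equals $2$, a graph with $mvd=2$ must have at least $4$ deleted edges when $n\ge6$, whence $|E(G)|\le\binom n2-4$; for $n=5$ the two-deletion list already forbids $mvd=2$, giving $|E(G)|\le\binom 52-3=7$.

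For the matching constructions, take for $n\ge 6$ the graph $G^{\ast}=K_n-\{v_1v_2,v_2v_3,v_3v_4,v_4v_5\}$, i.e. $K_n$ with the edges of a path $v_1v_2v_3v_4v_5$ removed, all remaining vertices universal; it has $\binom n2-4$ edges. Its nonadjacent pairs are $(v_1,v_2),(v_2,v_3),(v_3,v_4),(v_4,v_5)$, and the common-neighbourhood condition forces $N(v_1)\cap N(v_2)=\{v_4,v_5\}\cup U$ and $N(v_4)\cap N(v_5)=\{v_1,v_2\}\cup U$ to be monochromatic, where $U$ is the set of universal vertices and $U\neq\varnothing$ since $n\ge6$. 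These two sets overlap in $U$, so together they force all of $V\setminus\{v_3\}$ into one colour, giving $mvd\le2$; conversely colouring $V\setminus\{v_3\}$ with colour $1$ and $v_3$ with colour $2$ is an $MVD$-coloring, since each of the four pairs is separated by a monochromatic cut (for instance $\{v_4,v_5\}\cup U$ separates both $(v_1,v_2)$ and $(v_2,v_3)$), so $mvd=2$. For $n=5$, where $U=\varnothing$, I instead take $G=K_5-\{v_1v_2,v_2v_3,v_4v_5\}$ (complement $P_3\cup K_2$); here the pairs force $\{v_1,v_2,v_3\}$ and $\{v_4,v_5\}$ to be the two colour classes, which are now \emph{disjoint}, so $mvd=2$ is realised with $7$ edges, matching the bound.

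The main obstacle is the $n\ge6$ upper bound together with the subtle reason the clean "two colour classes" construction from $n=5$ degenerates for larger $n$. With a nonempty set $U$ of universal vertices, the two forced monochromatic common-neighbourhoods always share $U$, so the honest $P_3\cup K_2$ construction collapses to a single colour; this is precisely what annihilates $mvd=2$ for the $P_3\cup K_2$ and $3K_2$ complements and forces the switch to the connected path complement $P_5$, whose two extreme common-neighbourhoods still meet in $U$ yet leave exactly the middle vertex $v_3$ free. Getting this bookkeeping right in the $3$-edge enumeration, and checking in each construction that the monochromatic common neighbourhood is an actual separating set rather than merely monochromatic, is where the care lies.
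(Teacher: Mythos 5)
Your proposal is correct and takes essentially the same route as the paper: the same classification of graphs by the number and isomorphism type of deleted edges (your complements $K_3$, $P_4$, $K_{1,3}$, $P_3\cup K_2$, $3K_2$ are exactly the paper's cases $G_2$, $G_3$, $G_1$, $G_4$, $G_5$), the same $n=5$ extremal graph $K_5$ minus $P_3\cup K_2$, and the same $n\geq 6$ extremal graph $K_n$ minus the edges of a path $P_5$ (the paper's $G_6$), with the same two-coloring leaving only $v_3$ in the second color class. Your only departure is presentational: you isolate the observation that every $x$-$y$ vertex cut contains $N(x)\cap N(y)$, forcing it to be monochromatic, as an explicit standing tool, whereas the paper invokes the same fact inline in each case.
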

\begin{proof}
\begin{figure}
\centering
  \includegraphics[height=4.5cm]{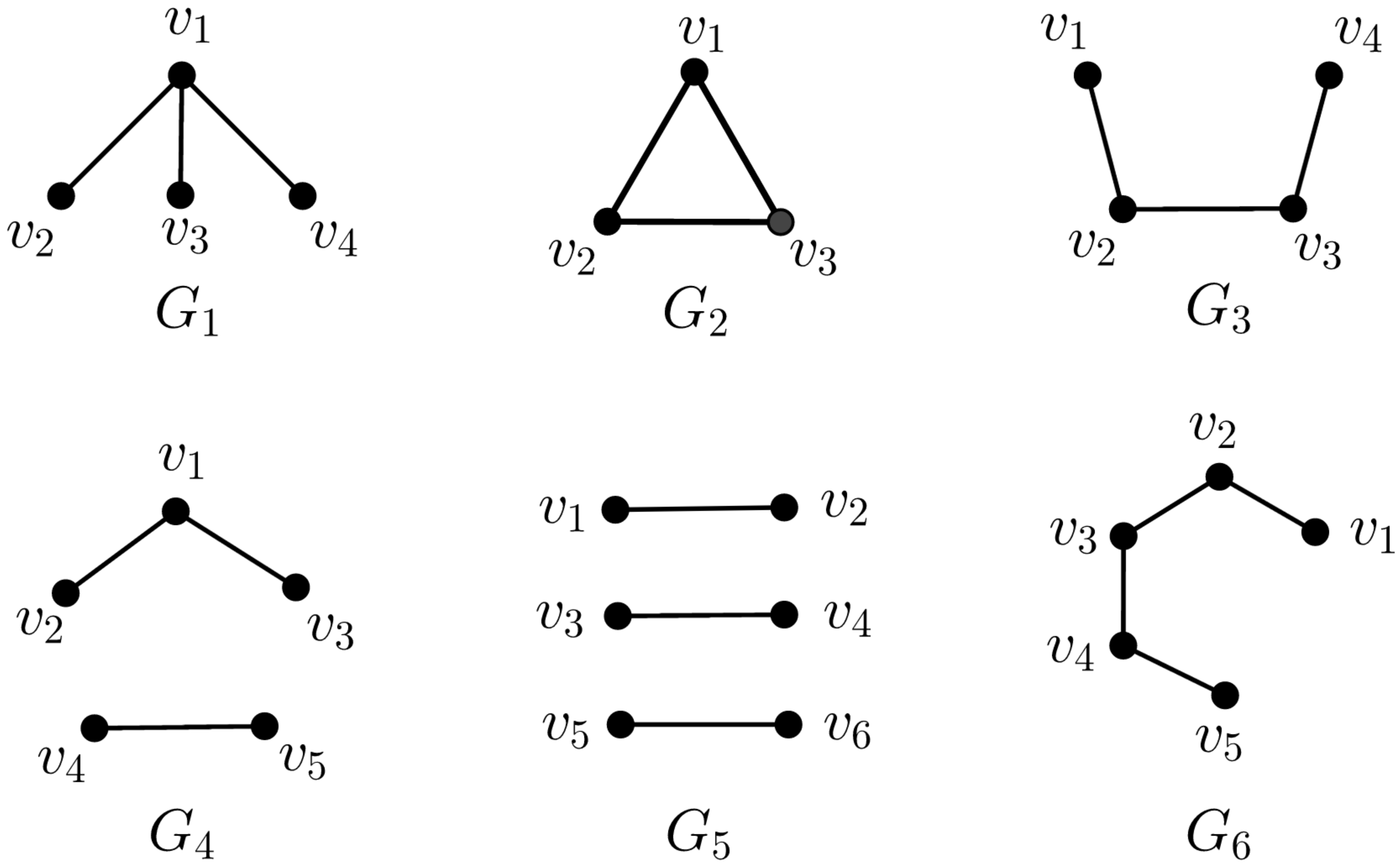}
  \caption{Proof process of Lemma \ref{k=2}}
  \label{5}
\end{figure}
Since $mvd(K_n)=n$, the maximum size of $G$ with $mvd(G)=n=2$ is $1$. There is no graph $G$ with $n=3$ and $mvd(G)=2$. According to Lemma \ref{add}(2), if $n\geq 4$ and $mvd(G)=2$, then $|E(G)|\leq |E(K_{n})|-2$ and the equation holds only when $n=4$. Now, let $n\geq 5$.

\textbf{Claim 1}: If $G$ is a graph of order $n$ obtained by removing any three edges from $K_{n}$, where $n\geq5 $, then $mvd(G)=2$ if and only if $n=5$ and the three removed edges are shown as $G_4$ in Fig. \ref{5}.

There are five cases to consider for the three removed edges (see $G_i, i\in [5]$ in Fig. \ref{5}), and if $n=5$, $G_5$ is excluded. Note that for case $G_i$, $i\in [4]$, $G$ is connected since $n\geq5$, and for case $G_5$, $G$ is connected only when $n>5$. For case $G_1$, $G$ is obtained by adding $n-4$ edges from $v_1$ to $K_{n-1}$. It follows by Lemma \ref{add}(1) that $mvd(G)=5$.
For the case of $G_2$, define a vertex-coloring $\tau: V(G)\rightarrow[4]$ such that $\tau(N(v_1))=\tau(N(v_2))=\tau(N(v_3))=1$, $\tau(v_1)=2$, $\tau(v_2)=3$ and $\tau(v_3)=4$. For any two nonadjacent vertices $x$ and $y$, $N(v_1)$ is a monochromatic $x$-$y$ vertex cut since $N(v_1)=N(v_2)=N(v_3)$. Then $mvd(G)\geq4$.
For the case of $G_3$, define a vertex-coloring $\tau: V(G)\rightarrow[3]$ such that $\tau(N(v_2))=\tau(N(v_3))=1$, $\tau(v_2)=2$ and $\tau(v_3)=3$. For any two nonadjacent vertices $x$ and $y$, $N(v_2)$ or $N(v_3)$ is a monochromatic $x$-$y$ vertex cut, then $mvd(G)\geq3$.

For the case of $G_4$, we claim that if $n=5$, then $mvd(G)=2$; if $n\geq6$, then $mvd(G)=1$. If $n=5$, define a vertex-coloring $\tau: V(G)\rightarrow[2]$ such that $\tau(v_1)=\tau(v_2)=\tau(v_3)=1$ and $\tau(v_4)=\tau(v_5)=2$. For any two nonadjacent vertices $x$ and $y$, if $\{x, y\}=\{v_4, v_5\}$, then $\{v_1, v_2, v_3\}$ is a monochromatic $x$-$y$ vertex cut; otherwise, $\{v_4, v_5\}$ is a monochromatic $x$-$y$ vertex cut. Then $mvd(G)\geq2$. Suppose that $mvd(G)>2$ and $\tau'$ is an $mvd$-coloring of $G$. For nonadjacent vertices $v_4$ and $v_5$, there must be $\tau'(v_1)=\tau'(v_2)=\tau'(v_3)$ since $N(v_4)=N(v_5)=\{v_1, v_2, v_3\}$. Thus $\tau'(v_4)\neq \tau'(v_5)$ and both are different from $\tau'(v_1)$, which contradicts the existence of monochromatic $v_1$-$v_2$ vertex cut since $N(v_1)\cap N(v_2)=\{v_4, v_5\}$. If $n\geq 6$ and $\tau'$ is an $mvd$-coloring of $G$, then $\tau'(V(G)-\{v_4, v_5\})$ is monochromatic since $N(v_4)=N(v_5)=V(G)-\{v_4, v_5\}$. For nonadjacent vertices $v_1$ and $v_2$, since $N(v_1)\cap N(v_2)=N(v_1)=V(G)-\{v_1, v_2, v_3\}$ and $(V(G)-\{v_4, v_5\}) \cap(V(G)-\{v_1, v_2, v_3\}) \neq \varnothing$ for $n\geq 6$, $\tau'(V(G)-\{v_1, v_2, v_3\})=\tau'(V(G)-\{v_4, v_5\})$. Therefore, $mvd(G)=1$.
At last, $G=K_{2, 2, 2, 1, \ldots, 1}$ for the case of $G_5$, then it follows by Theorem \ref{mvd=1}(2) that $mvd(G)=1$.

Therefore, the maximum size of $G$ with $n=5$ and $mvd(G)=2$ is $7$. Furthermore, if $n\geq 6$ and $mvd(G)=2$, then $|E(G)|\leq |E(K_{n})|-4$. Thus we only need to prove the following claim to complete the proof of Lemma \ref{k=2}

\textbf{Claim 2}: $G$ is a graph of order $n$, $n\geq 6 $, and if $G$ is obtained by removing any four edges from $K_{n}$, where the four removed edges are shown as $G_6$ in Fig. \ref{5}, then $mvd(G)=2$.

$G$ is connected since $n\geq 6$. Define a vertex-coloring $\tau: V(G)\rightarrow[2]$ such that $\tau(V(G)-\{v_3\})=1$ and $\tau (v_3)=2$. For any two nonadjacent vertices $x$ and $y$, $V(G)-\{x, y, v_3\}$ is a monochromatic $x$-$y$ vertex cut. Then $mvd(G)\geq 2$. Let $\tau'$ be an $mvd$-coloring of $G$. For nonadjacent vertices $v_4$ and $v_5$, $\tau'(V(G)-\{v_3, v_4, v_5\})$ is monochromatic since $N(v_4)\cap N(v_5)=N(v_4)=V(G)-\{v_3, v_4, v_5\}$. For nonadjacent vertices $v_1$ and $v_2$, since $N(v_1)\cap N(v_2)=N(v_2)=V(G)-\{v_1, v_2, v_3\}$ and $(V(G)-\{v_3, v_4, v_5\}) \cap (V(G)-\{v_1, v_2, v_3\}) \neq \varnothing$ for $n\geq 6$, $\tau'(V(G)-\{v_1, v_2, v_3\})=1$. Uncolored vertex $v_3$ adds a new color at most. Therefore, $mvd(G)=2$.
\end{proof}

\begin{proof}[Proof of Theorem \ref{size}]
If $mvd(G)=n$, then the size is maximum when $G=K_n$. By Lemma \ref{add}(2), if $G$ is a graph of order $n$ obtained by removing any edge from $K_{n}$, where $n\geq3$, then $mvd(G)=3$. So, if $mvd(G)=3$ and $n\geq 4$, then the maximum size of $G$ of order $n$ is $\frac{n(n-1)}{2}-1$. There is no graph $G$ with $1< n\leq 4$ and $mvd(G)=1$. By Lemma \ref{add}(2), if $mvd(G)=1$ and $n\geq5$, then the maximum size of $G$ is $\frac{n(n-1)}{2}-2$. If $mvd(G)=2$, we refer to Lemma \ref{k=2}. Now we consider the case $4\leq k\leq n-1$. Let $t$ denote the number of vertices with degree $n-1$. We first claim that if the size of a connected graph $G$ of order $n$ is at least $\frac{n(n-1)}{2}-k+3$, then $mvd(G)\leq k-1$.\\
\textbf{Case 1}. $n-k+2\leq t\leq n-1$.

For any two nonadjacent vertices $x$ and $y$ in $G$, since $x,\ y$ are adjacent to each vertex of degree $n-1$ in $G$, $|N(x)\cap N(y)|\geq n-k+2$. Therefore, $mvd(G)\leq k-1$ by Theorem \ref{bound}.\\
\textbf{Case 2}. $0\leq t\leq n-k+2$.

\textbf{Claim 1}: There is at least one vertex with degree $n-2$.

Otherwise, except $t$ vertices with degree $n-1$, the maximum degree of the remaining vertices in $G$ is $n-3$ at most. Then the size is
\begin{equation}
\begin{aligned}
|E(G)|&\leq \frac{t(n-1)+(n-t)(n-3)}{2}\\
&=\frac{n^{2}-3n+2t}{2}\\
&\leq \frac{n^{2}-3n+2(n-k+2)}{2}\\
&<\frac{n(n-1)}{2}-k+3,\nonumber
\end{aligned}
\end{equation}
a contradiction. Thus, $G$ has at least one vertex with degree $n-2$.

\textbf{Claim 2}: $\delta(G)\geq n-k+2$.

Otherwise, there is a vertex with degree $\leq n-k+1$, then the size is
\begin{equation}
\begin{aligned}
|E(G)|&\leq \frac{(n-k+2)(n-1)+(n-k+1)+(k-3)(n-2)}{2}\\
&=\frac{n^{2}-n-2k+5}{2}\\
&<\frac{n(n-1)}{2}-k+3,\nonumber
\end{aligned}
\end{equation}
a contradiction. Thus, $\delta(G)\geq n-k+2$.

Let $x$ be a vertex of $G$ with degree $n-2$. There is a vertex $y$ in $G$ which is nonadjacent to $x$. By Claim 2, $d(y)\geq n-k+2$, then $|N(x)\cap N(y)|\geq n-k+2$. In other words, there are at least $n-k+2$ internal disjoint $x,y$-paths. Therefore, $mvd(G)\leq k-1$ by Theorem \ref{bound}.

Above all, if $mvd(G)\geq k$, then the size of $G$ of order $n$ is at most $\frac{n(n-1)}{2}-k+2$. It remains to show that for any integers $k$ and $n$, where $4\leq k\leq n-1$, there must be a connected graph $G$ of order $n$ and size $\frac{n(n-1)}{2}-k+2$ such that $mvd(G)=k$. Suppose that $G$ is a graph of order $n$ and size $\frac{n(n-1)}{2}-k+2$ obtained by adding $n-k+1$ edges to $K_{n-1}$ from a vertex $v$ outside $K_{n-1}$. By Lemma \ref{add}(1), $mvd(G)=k$.
\end{proof}

\begin{proof}[Proof of Theorem \ref{E-G}]
It is worth mentioning that the parameter $f_v(n, k)$ is equivalent to another parameter. Let $s_v(n, k)=max\{|E(G)|:|G|=n, mvd(G)\leq k\}$. It is easy to see that $f_v(n, k)=s_v(n, k-1)+1$. Let $n\geq5$. There are three cases as follows.\\
\textbf{Case 1}. $k=1$.

Since $mvd(G)\geq1$ holds for any graph $G$ and the tree has minimum size, $f_v(n, 1)=n-1$.\\
\textbf{Case 2}. $2\leq k\leq3$.

According to Theorem \ref{size}, $|E(G)|_{max}=\frac{n(n-1)}{2}-2$ if $mvd(G)=1$, $|E(G)|_{max}=\frac{n(n-1)}{2}-3$ if $mvd(G)=2$ and $n=5$, and $|E(G)|_{max}=\frac{n(n-1)}{2}-4$ if $mvd(G)=2$ and $n\geq6$. So we have $s_v(n, 1)=s_v(n, 2)=\frac{n(n-1)}{2}-2$. Since $f_v(n, k)=s_v(n, k-1)+1$, $f_v(n, 2)=f_v(n, 3)=\frac{n(n-1)}{2}-1$.\\
\textbf{Case 3}. $4\leq k\leq n$.

According to Theorem \ref{size}, $|E(G)|_{max}=\frac{n(n-1)}{2}-1$ if $mvd(G)=3$, and $|E(G)|_{max}<\frac{n(n-1)}{2}-1$ if $3<mvd(G)\leq n-1$. So we have $s_v(n, 3)=s_v(n, 4)=\cdots=s_v(n, n-1)=\frac{n(n-1)}{2}-1$. Since $f_v(n, k)=s_v(n, k-1)+1$, $f_v(n, 4)=f_v(n, 5)=\cdots=f_v(n, n)=\frac{n(n-1)}{2}$.
\end{proof}

\begin{rem}\label{remark 5}
For positive integers $n,\ k$ with $1\leq k\leq n\leq 4$, the results are shown as follows.
\end{rem}
\begin{table}[h]
\begin{tabular}{|c|c|c|c|c|c|c|c|c|c|c|c|c|c|c|c|c|c|}
\hline
$n$  & \multicolumn{4}{c|}{1}& \multicolumn{4}{c|}{2}& \multicolumn{4}{c|}{3}& \multicolumn{4}{c|}{4} \\
\hline
$k$ & 1 & 2 & 3 & 4 & 1 & 2 & 3 & 4 & 1 & 2 & 3 & 4 & 1 & 2 & 3 & 4  \\
\hline
$|E(G)|_{max}$&0&-&-&-&-&1&-&-&-&-&3&-&-&4&5&6\\
\hline
$f_v(n,k)$&0&-&-&-&1&1&-&-&2&2&2&-&3&3&5&6\\
\hline
\end{tabular}
\end{table}

\section{Algorithm for $mvd$-coloring}
\quad \quad The monochromatic vertex-disconnection number of a graph comes from coloring by keeping a global property of a graph. In Theorem \ref{blockrelation} and \ref{everyblock}, we transformed the global property into a local property for each block, which greatly simplified the original problem.
Based on this, we propose an algorithm to obtain $mvd(G)$ and an $mvd$-coloring of $G$. For complex graphs, our algorithm can give an $mvd$-coloring quickly and accurately, or at least reduce the tedious work.
\subsection{Pseudo-code for $mvd$-coloring algorithm}
\quad \quad Our algorithm is based on the block decomposition algorithm, which proposed by Tarjan \cite{Tarjan} in 1972. However, Tarjan's algorithm is skillful and the part of this algorithm that deals with undirected graphs is less known. Thus, we first restate the block decomposition algorithm for undirected graphs in a more understandable way than in \cite{Tarjan}.

\makeatletter
\newenvironment{breakable1algorithm}
  {
   \begin{center}
     \refstepcounter{algorithm}
     \hrule height.8pt depth0pt \kern2pt
     \renewcommand{\caption}[2][\relax]{
       {\raggedright\textbf{\ALG@name~\thealgorithm} ##2\par}
       \ifx\relax##1\relax
         \addcontentsline{loa}{algorithm}{\protect\numberline{\thealgorithm}##2}
       \else
         \addcontentsline{loa}{algorithm}{\protect\numberline{\thealgorithm}##1}
       \fi
       \kern2pt\hrule\kern2pt
     }
  }{
     \kern2pt\hrule\relax
   \end{center}
  }
\makeatother

\begin{breakable1algorithm}\label{algorithm2}
\caption{Block decomposition algorithm for undirected graph $G$}
\hspace*{0.02in} {\bf Input:}
$(V(G),\ E(G))$, a root $x$ of $G$\\
\hspace*{0.02in} {\bf Output:}
set $CutSet$ of cut-vertices, set $BlocksSet$ of blocks
\begin{algorithmic}[1]
\State \textbf{for every} edge $vw\in E(G)$ mark $vw$ ``unexplored''
\For{\textbf{every} vertex $w \in V(G)$}
    \State $K(w) \gets 0$
    \State $f(w)$ $\gets$ null
\EndFor
\State $CutSet \gets$ null
\State $BlocksSet \gets$ null
\State $v \gets x$
\State $K(x) \gets 1$
\State $L(x) \gets 1$
\State $i \gets 1$
\State vacate $S$
\State push $x$ into $S$
\While{$v$ has an unexplored incident edge or $f(v) \neq$ null}
    \If{$v$ has an unexplored incident edge $vw$}
        \State mark $vw$ ``explored''
        \If{$K(w)=0$ ($w$ is unexplored)}
            \State push $w$ into $S$
            \State $f(w) \gets v$
            \State $i=i+1$
            \State $K(w) \gets i$
            \State $L(w) \gets i$
            \State $v \gets w$
        \Else{ ($w$ is explored)}
            \State $L(v)$ $\gets$ min$\{L(v),K(w)\}$
        \EndIf
    \Else{ ($f(v)$ is defined)}
        \If{$L(v)\geq K(f(v))$}
            \If{$f(v)\neq x$ or $x$ has an unexplored incident edge}
                   \State add $f(v)$ to $CutSet$
            \EndIf
            \State pop vertices from $S$ down to and including $v$
            \State the set of popped vertices, with $f(v)$, is an element of $BlocksSet$
        \Else{ ($L(v)<K(f(v))$)}
            \State $L(f(v))$ $\gets$ min$\{L(f(v)),L(v)\}$
        \EndIf
        $v \gets f(v)$
    \EndIf
\EndWhile
\end{algorithmic}
\end{breakable1algorithm}

\begin{figure}[H]
\centering
  \includegraphics[height=6cm]{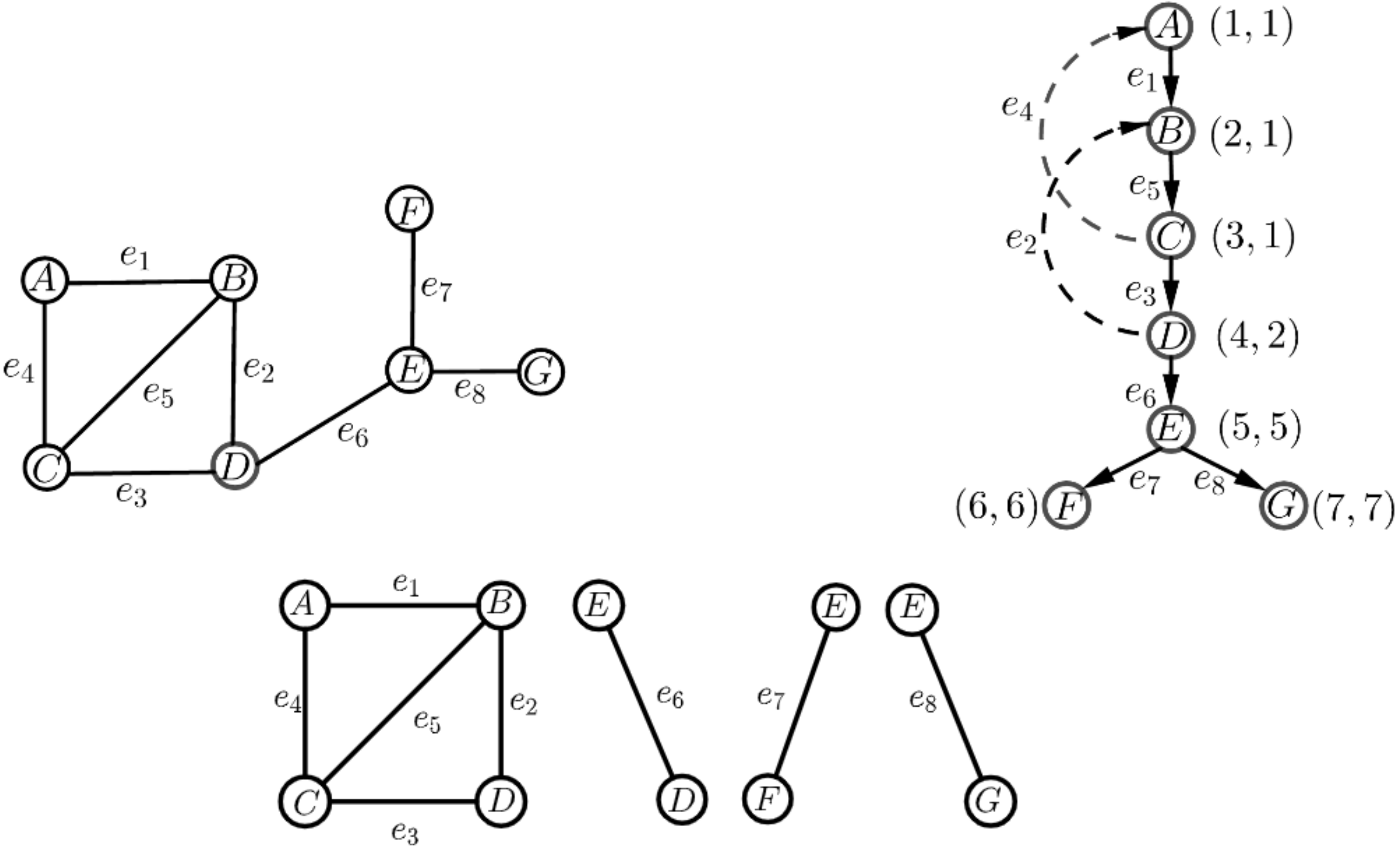}
  \caption{$(K(\cdot),\ L(\cdot))$, cut-vertices and the set of blocks}
  \label{1}
\end{figure}
Note that in Algorithm \ref{algorithm2}, we maintain the list of vertices to be searched as a stack (pushdown store). We store the vertices in the stack in the order of when they are explored. If on backtracking from $v$ to $f(v)$, we discover that $f(v)$ is a cut-vertex, we read and pop all vertices from the top of the stack down to and including $v$. All these vertices, plus $f(v)$, (which is not popped at this point from the stack) constitute a block.
Applying Algorithm \ref{algorithm2} to a graph, Fig. \ref{1} shows its $(K(\cdot),\ L(\cdot))$, cut-vertices and the set of blocks.

Algorithm \ref{algorithm2} produces the set of cut-vertices as well as the set of blocks. Theorem \ref{blockrelation} and \ref{everyblock} guarantee the correctness of Algorithm \ref{algorithm3}. We construct a $type\ set$ that consists of all graphs with known $mvd$-colorings. In line 4, we give block $Block$ an $mvd$-coloring (each time we color a new block with new colors), which can be achieved by finding the graph $thBlock$ that is isomorphic to $Block$ in type set, and coloring $Block$ according to $thBlock$'s $mvd$-coloring. The isomorphism algorithm is not the focus of this paper, so we do not elaborate on it. Since cut-vertices may be colored multiple times, in lines 7-17 we update the colors of the cut-vertices, as well as other related vertices, and finally give an $mvd$-coloring of $G$.

\makeatletter
\newenvironment{breakable2algorithm}
  {
   \begin{center}
     \refstepcounter{algorithm}
     \hrule height.8pt depth0pt \kern2pt
     \renewcommand{\caption}[2][\relax]{
       {\raggedright\textbf{\ALG@name~\thealgorithm} ##2\par}
       \ifx\relax##1\relax
         \addcontentsline{loa}{algorithm}{\protect\numberline{\thealgorithm}##2}
       \else
         \addcontentsline{loa}{algorithm}{\protect\numberline{\thealgorithm}##1}
       \fi
       \kern2pt\hrule\kern2pt
     }
  }{
     \kern2pt\hrule\relax
   \end{center}
  }
\makeatother

\begin{breakable2algorithm}\label{algorithm3}
\caption{Compute $mvd(G)$ and give an $mvd$-coloring of $G$}
\hspace*{0.02in} {\bf Input:}
$CutSet$ and $BlocksSet$ from Algorithm \ref{algorithm2}\\
\hspace*{0.02in} {\bf Output:}
$mvd(G)$, $mvd$-coloring of $G$
\begin{algorithmic}[1]
\State $mvd(G) \gets 0$
\State $r \gets card(BlocksSet)$
\For{\textbf{every} block $Block \in BlocksSet$}
    \State find the block $thBlock$ isomorphic to $Block$ in type set
    \State color $Block$ with new colors according to $thBlock$'s $mvd$-coloring
    \State $mvd(G) \gets mvd(G)+mvd(Block)$
\EndFor
$mvd(G) \gets mvd(G)-r+1$
\For{\textbf{every} vertex $u \in CutSet$}
    \State color($u$) $\gets$ null
\EndFor
\For{\textbf{every} block $Block \in BlocksSet$}
    \For{\textbf{every} vertex $v \in Block$}
        \If{$v$ is a cut-vertex}
           \State find the cut-vertex $u$ corresponding to $v$ in the $CutSet$
           \If{color($u$)=null}
               \State color($u$) $\gets$ color($v$)
           \Else
               \State color($v$) $\gets$ color($u$)
               \State update the colors of all vertices in $Block$ according to color($v$)
           \EndIf
        \EndIf
    \EndFor
\EndFor
\end{algorithmic}
\end{breakable2algorithm}

The main part of the code is shown in Appendix $B$ and the complete code is written in Java and given on Github: https://github.com/fumiaoT/mvd-coloring.git.
Our algorithm can give an $mvd$-coloring of $G$ quickly and accurately and reduce a lot of tedious work, provided that for any block of $G$, there is a graph in the type set that is isomorphic to it; otherwise, the program is also able to filter out blocks that are not isomorphic to any graph in the type set, and the $mvd$-coloring of these blocks, once solved, can further enrich our type set.

In the future, we can make an interactive graphical interface where the user can draw graphs on their web pages and our algorithm will get all the cut-vertices and blocks based on the graphs. Also the blocks are compared with the graphs in type set and $mvd$-colorings are performed. Finally, the cut-vertices, the colored graphs are labeled on the user's graphs. This algorithm is convenient and applicable as a mathematical tool to solve the $mvd$-coloring problem. Moreover, the algorithm can be modified in a similar way to solve a series of coloring problems, such as $rd$-coloring, $md$-coloring, etc.
\subsection{ An algorithm example }
\quad \quad The minimally 2-connected graphs with small order are elements of type set and are stored at $resource$ $directory$.
Thus if the blocks of graph $G$ are minimally 2-connected graphs with small order, our algorithm can give $G$ an $mvd$-coloring.
As an example, we apply our algorithm to the graph $G$ shown in Fig. \ref{6}$(1)$, which is stored into the computer by entering its adjacency matrix at the $main(\cdot)$ function in the $GraphContext\{\cdot\}$ class. And the output results are listed at the end of Appendix $B$.

The $DFSTarjan(\cdot)$ function in the $BlockAndCutVerticesBuilder\{\cdot\}$ class performs block decomposition on $G$, and outputs the cut-vertex $H$ and two blocks whose vertices and adjacency matrices are shown in $Block\ num\ 1$ and $Block\ num\ 2$, respectively (also Fig. \ref{7}$(1)(3)$).
The $getIsomorphicColors(\cdot)$ function in the $IsomorphicJudger\{\cdot\}$ class looks for graphs in type set that are isomorphic to the blocks of $G$, and outputs their correspondence at $Isomorphic$ $Relationship$ (also Fig. \ref{7}).
The $MvdColorMarker$ class performs $mvd$-coloring on $G$ and outputs the $mvd$-coloring of $G$ at $Coloring$ $Vertices$ $Results$ (also Fig. \ref{6}$(2)$).

\begin{figure}[H]
\centering
  \includegraphics[height=3.2cm]{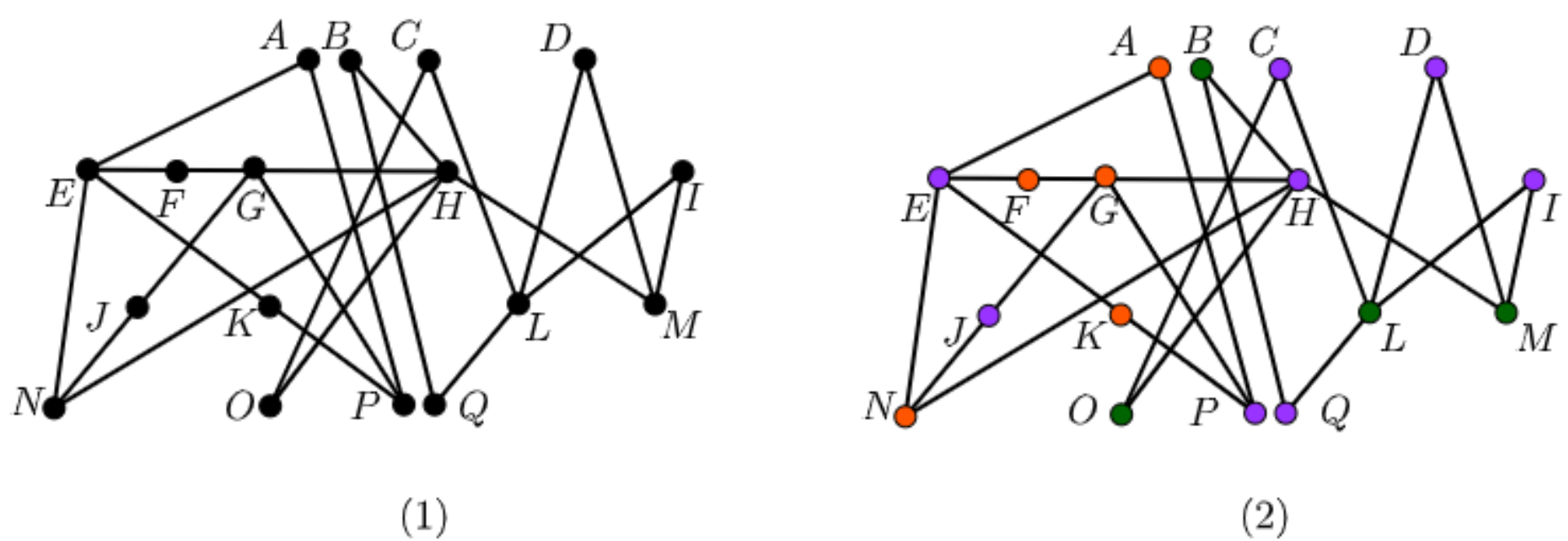}
  \caption{An example and its mvd-coloring}
  \label{6}
\end{figure}

\begin{figure}[H]
\centering
  \includegraphics[height=3.6cm]{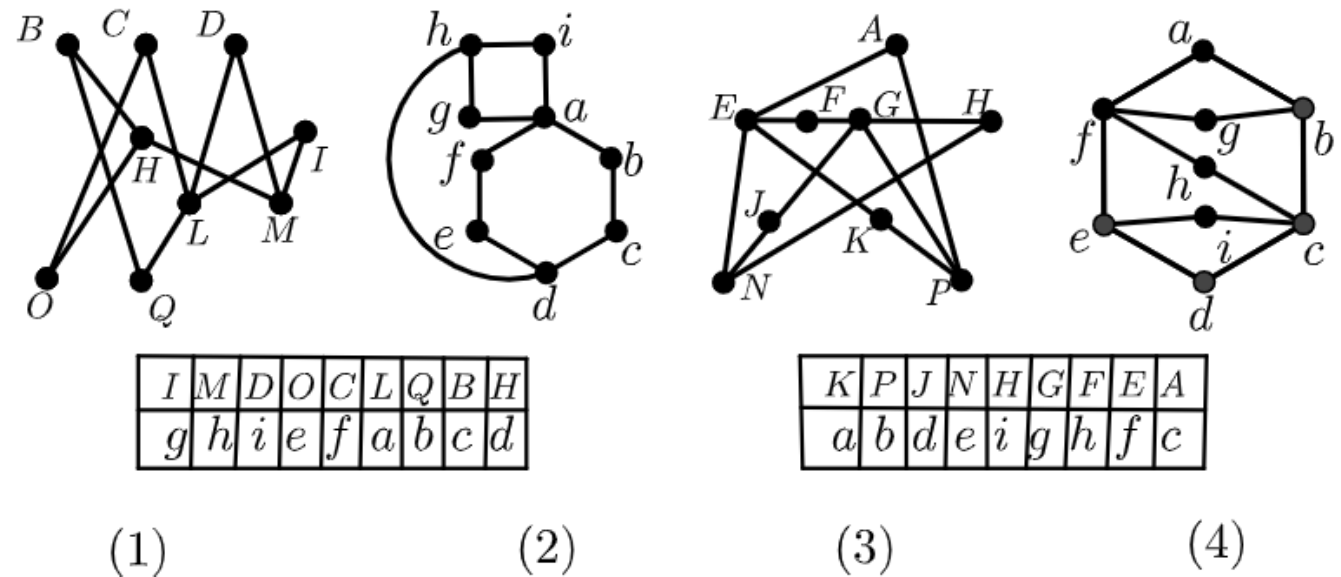}
  \caption{The output results of block decomposition and isomorphism}
  \label{7}
\end{figure}

\section*{Acknowledgement}

\quad \quad This work is supported by the National Natural Science Foundation of China (NSFC11921001), the Natural Key Research and Development Program of China (2018YFA0704701), the National Natural Science Foundation of China (NSFC11801410) and the National Natural Science Foundation of China (NSFC11971346). The authors are grateful to professor C. Zong for his supervision and discussion.

\newpage
\begin{appendix}
\section{Minimal blocks With 10 and Fewer Vertices}

\begin{figure}[H]
\centering
  \includegraphics[width=11.5cm]{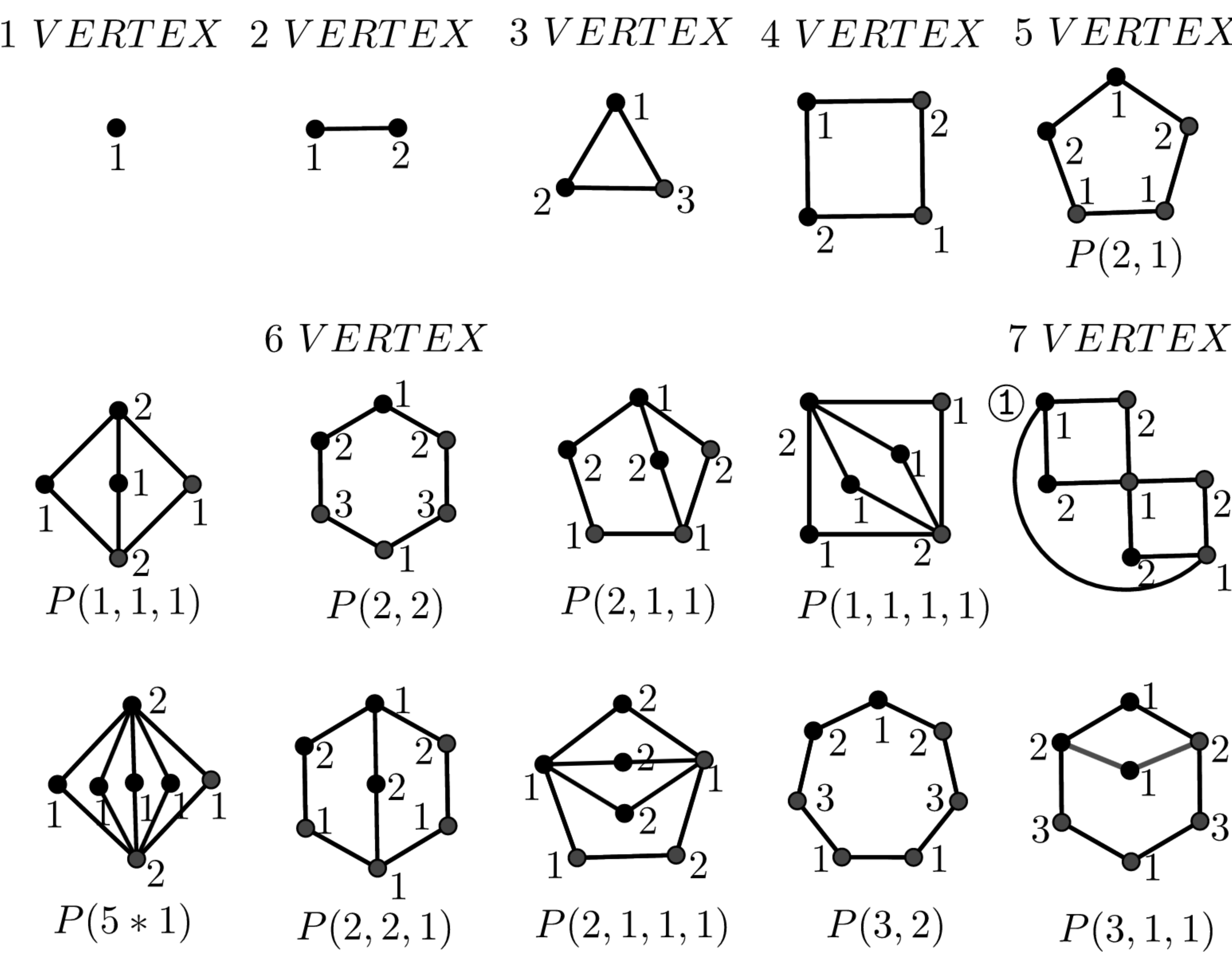}
  \includegraphics[width=12cm]{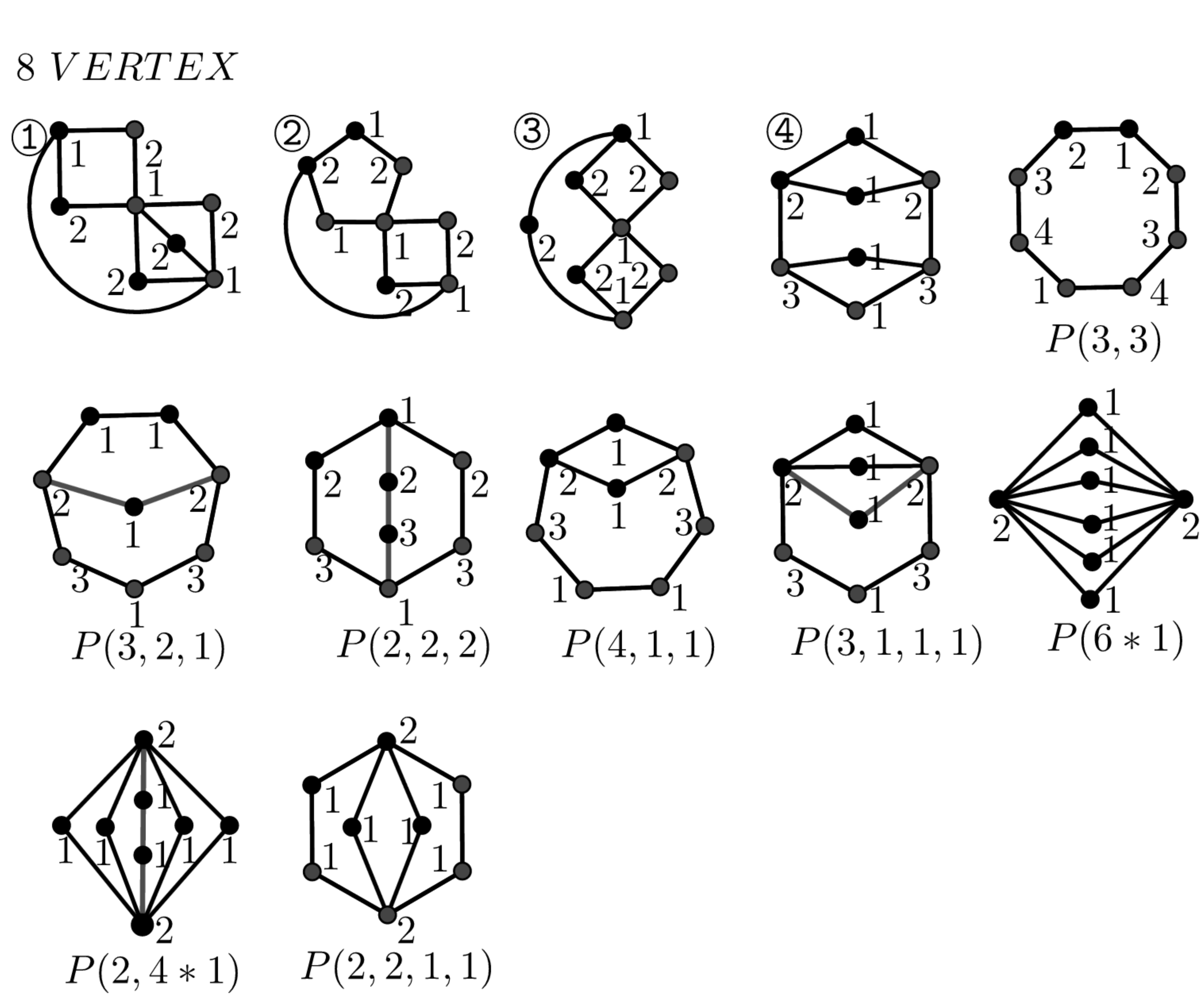}
\end{figure}
\begin{figure}[H]
\centering
  \includegraphics[height=8cm]{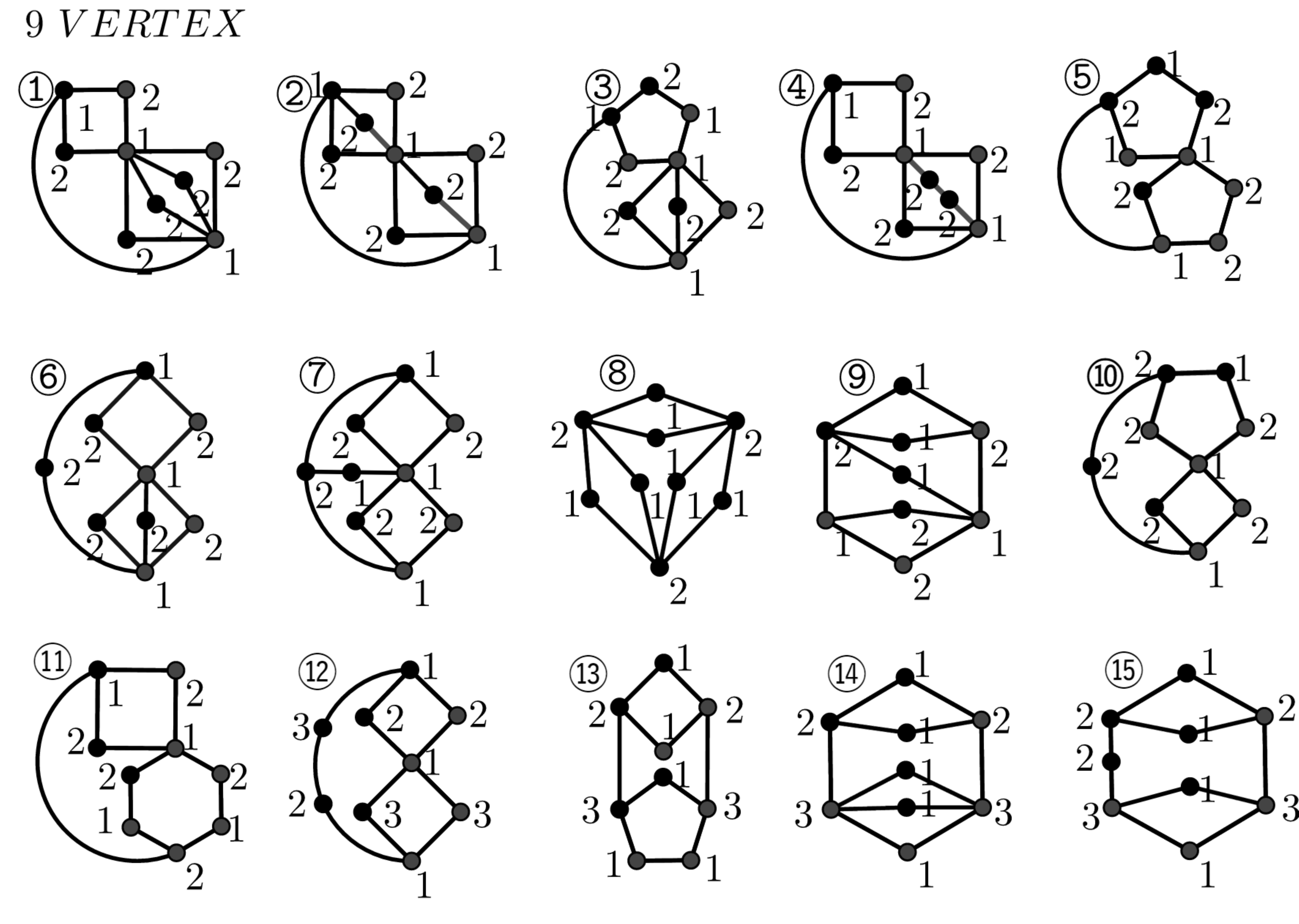}
  \includegraphics[height=10.5cm]{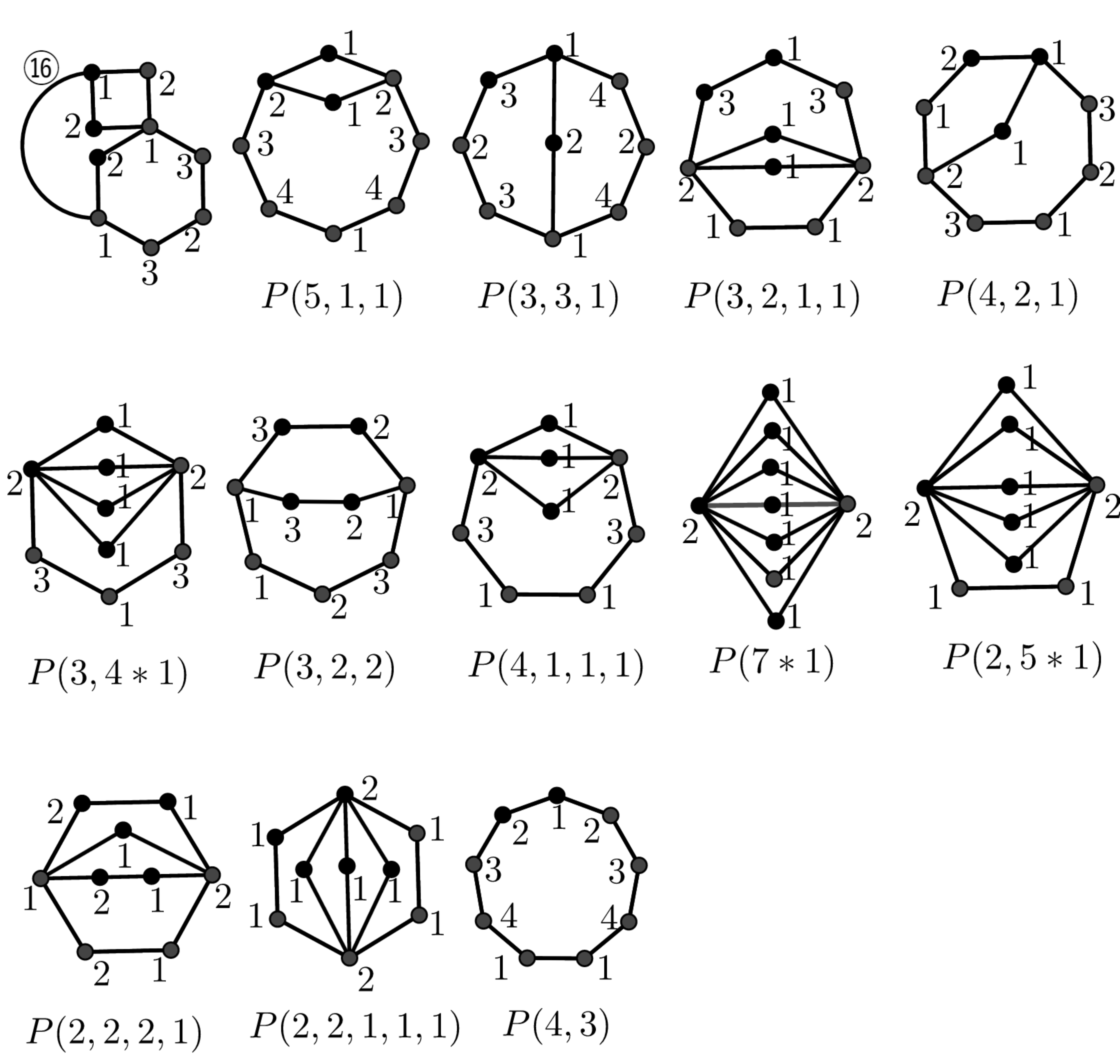}
\end{figure}
\begin{figure}[H]
\centering
  \includegraphics[height=10cm]{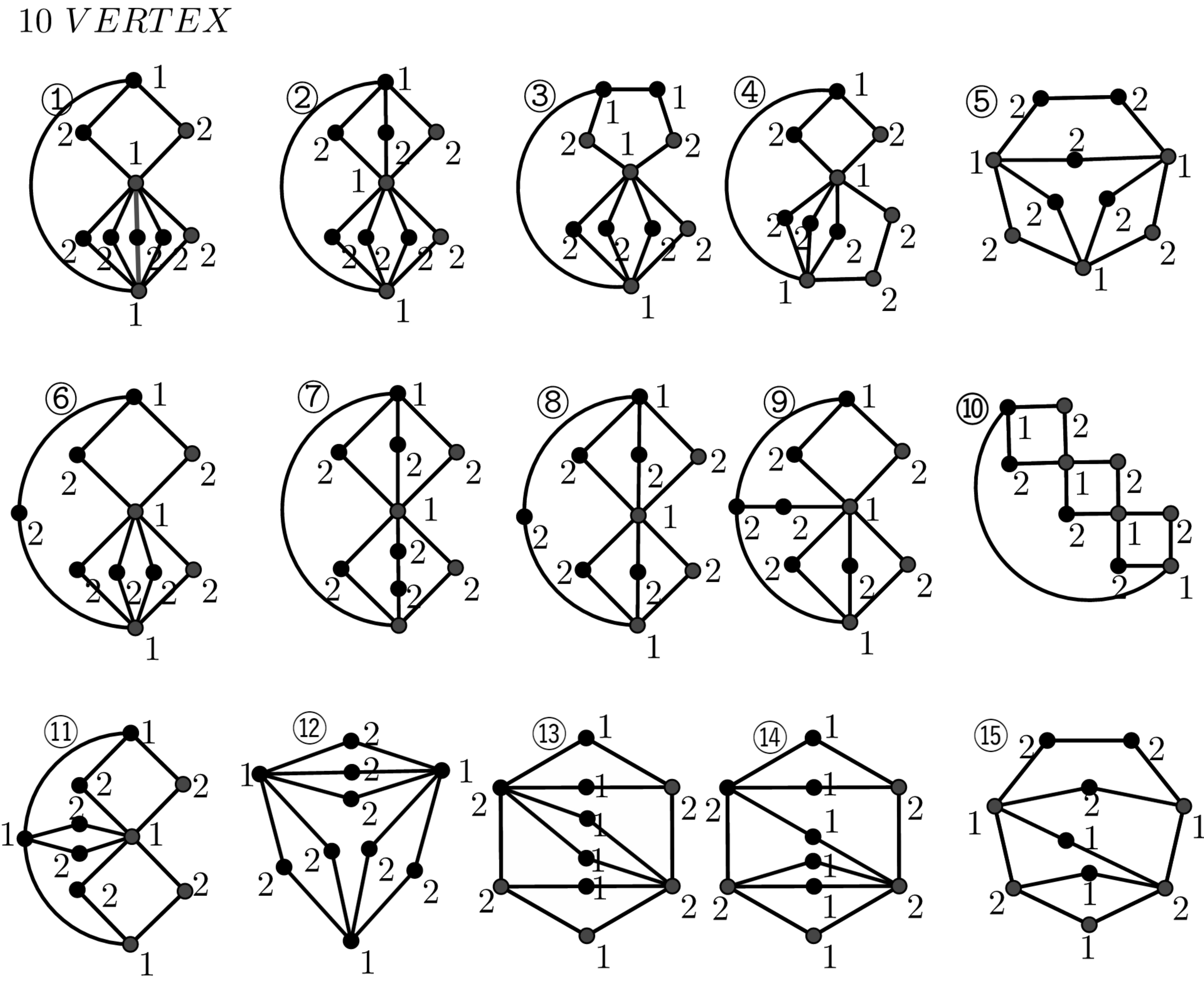}
  \includegraphics[height=7cm]{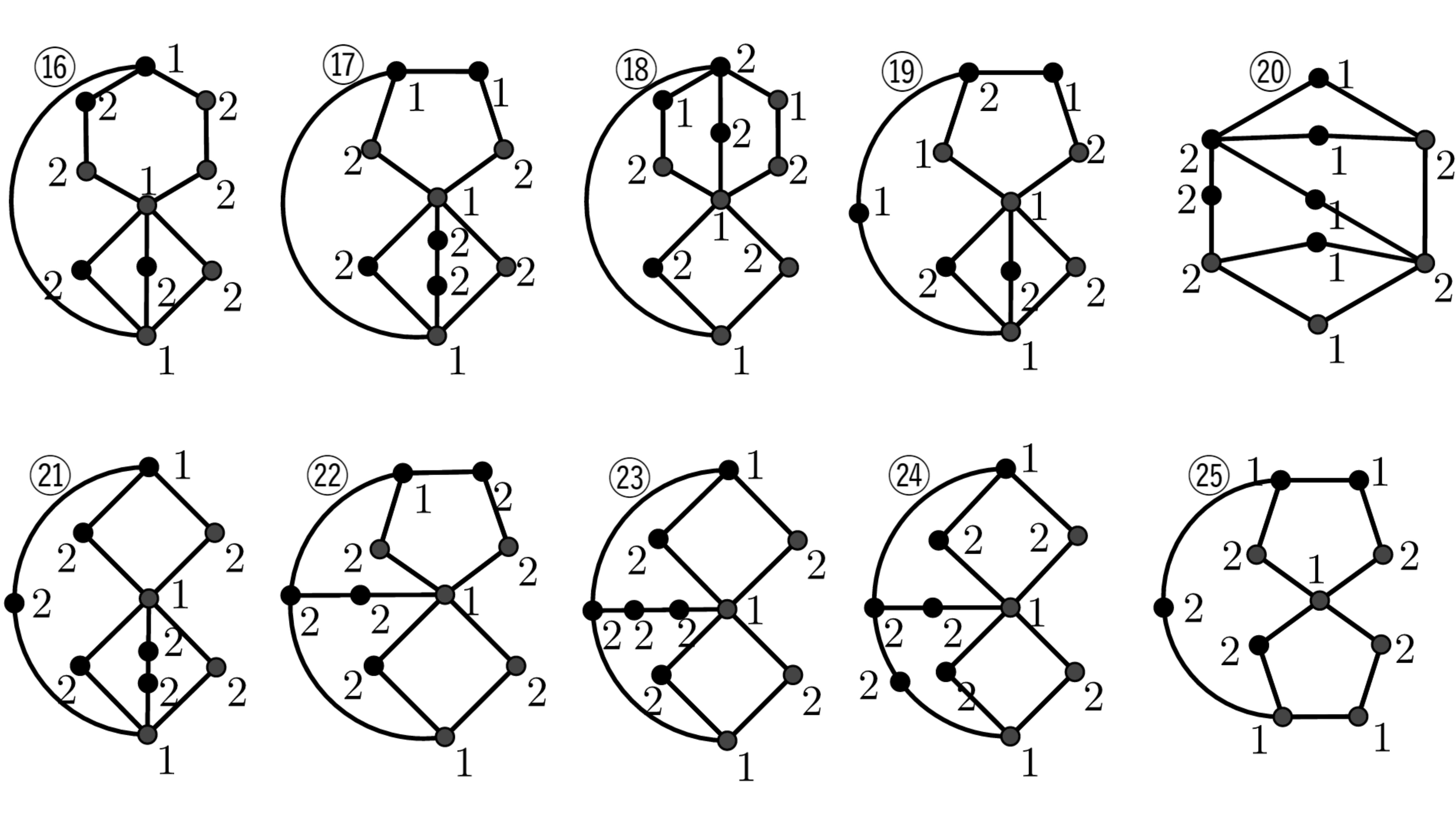}
  \includegraphics[height=3cm]{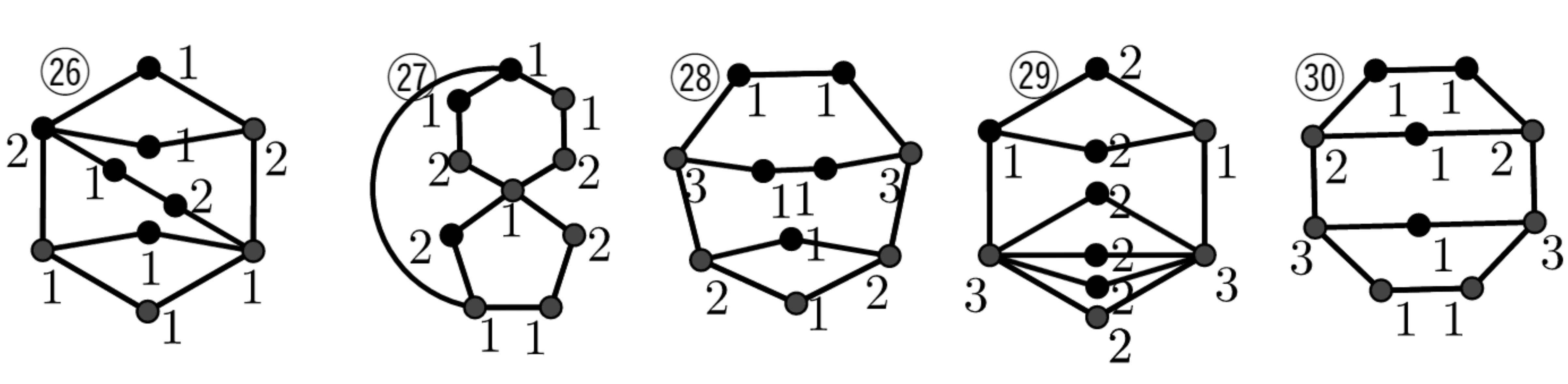}
\end{figure}
\begin{figure}[H]
\centering
  \includegraphics[height=6.2cm]{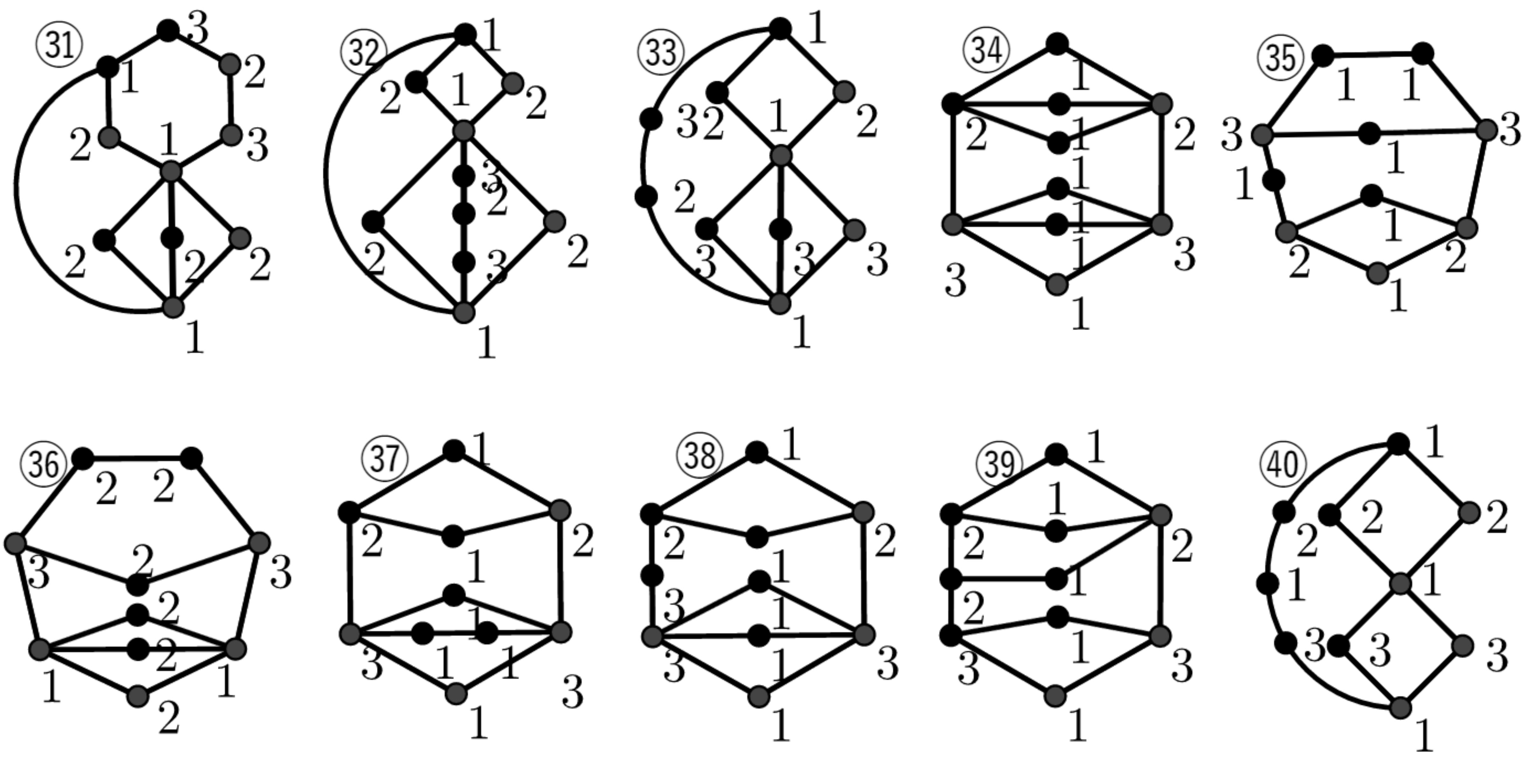}
  \includegraphics[height=7.2cm]{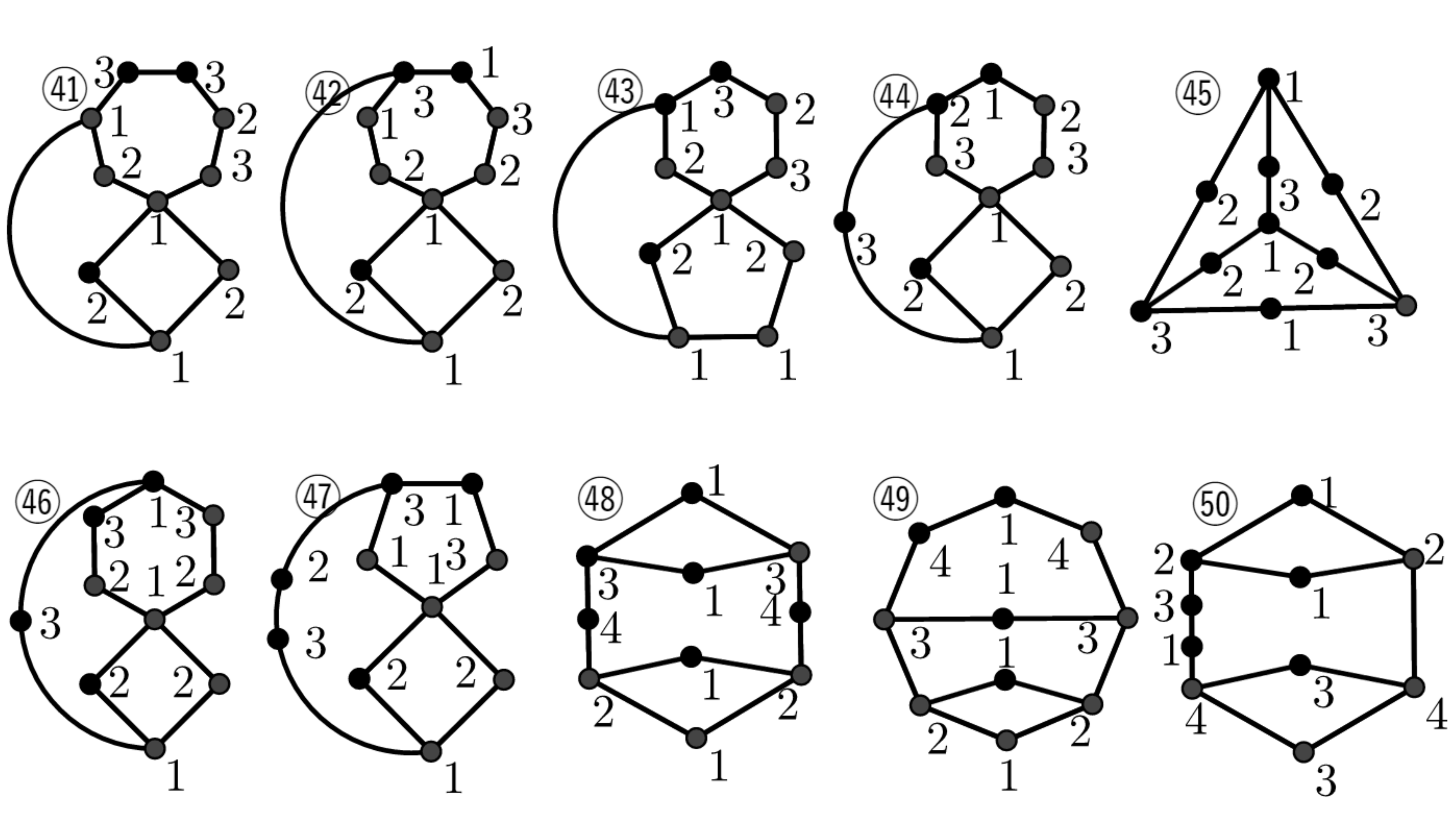}
  \includegraphics[height=7.9cm]{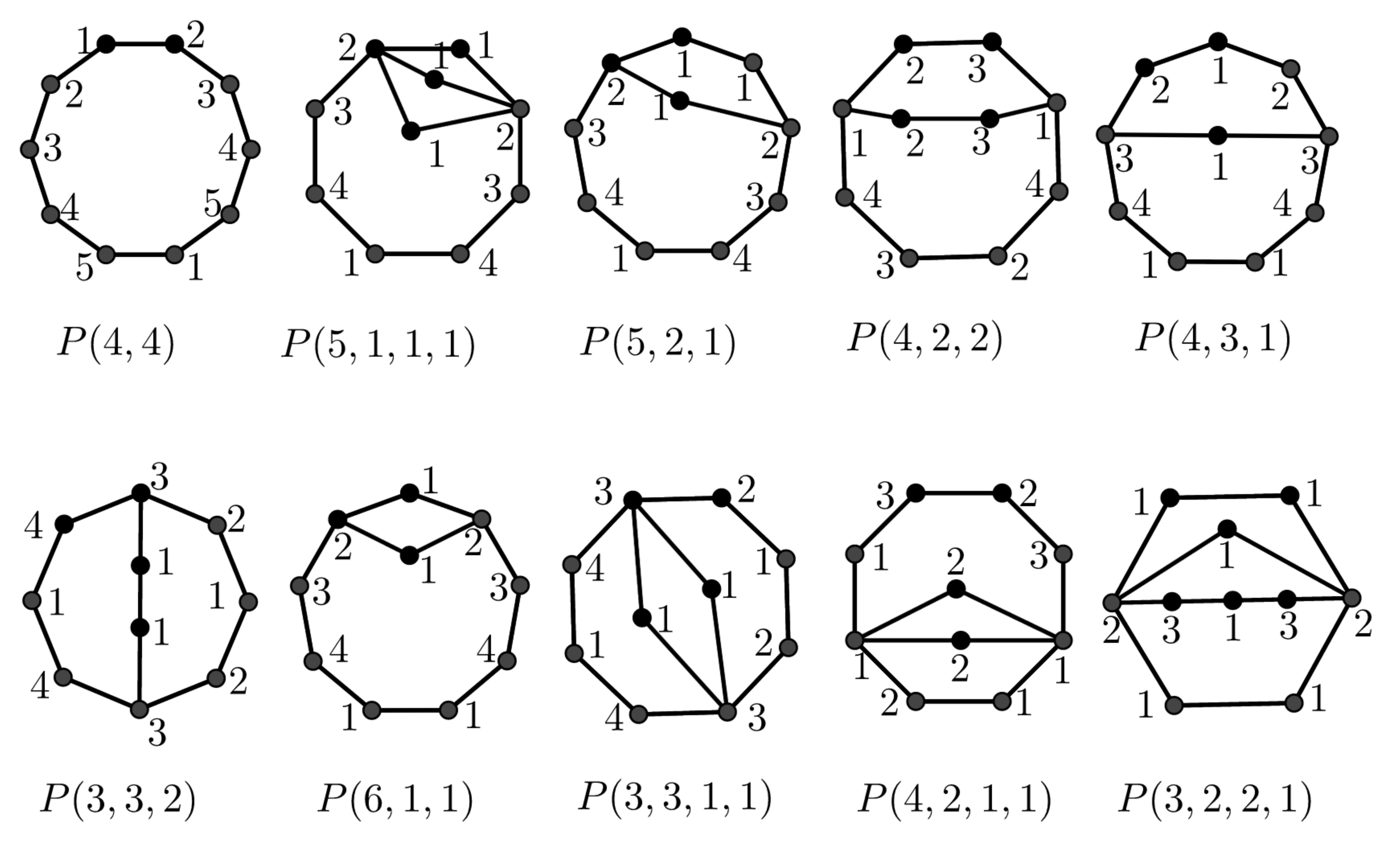}
\end{figure}
\begin{figure}[H]
\centering
  \includegraphics[height=8.4cm]{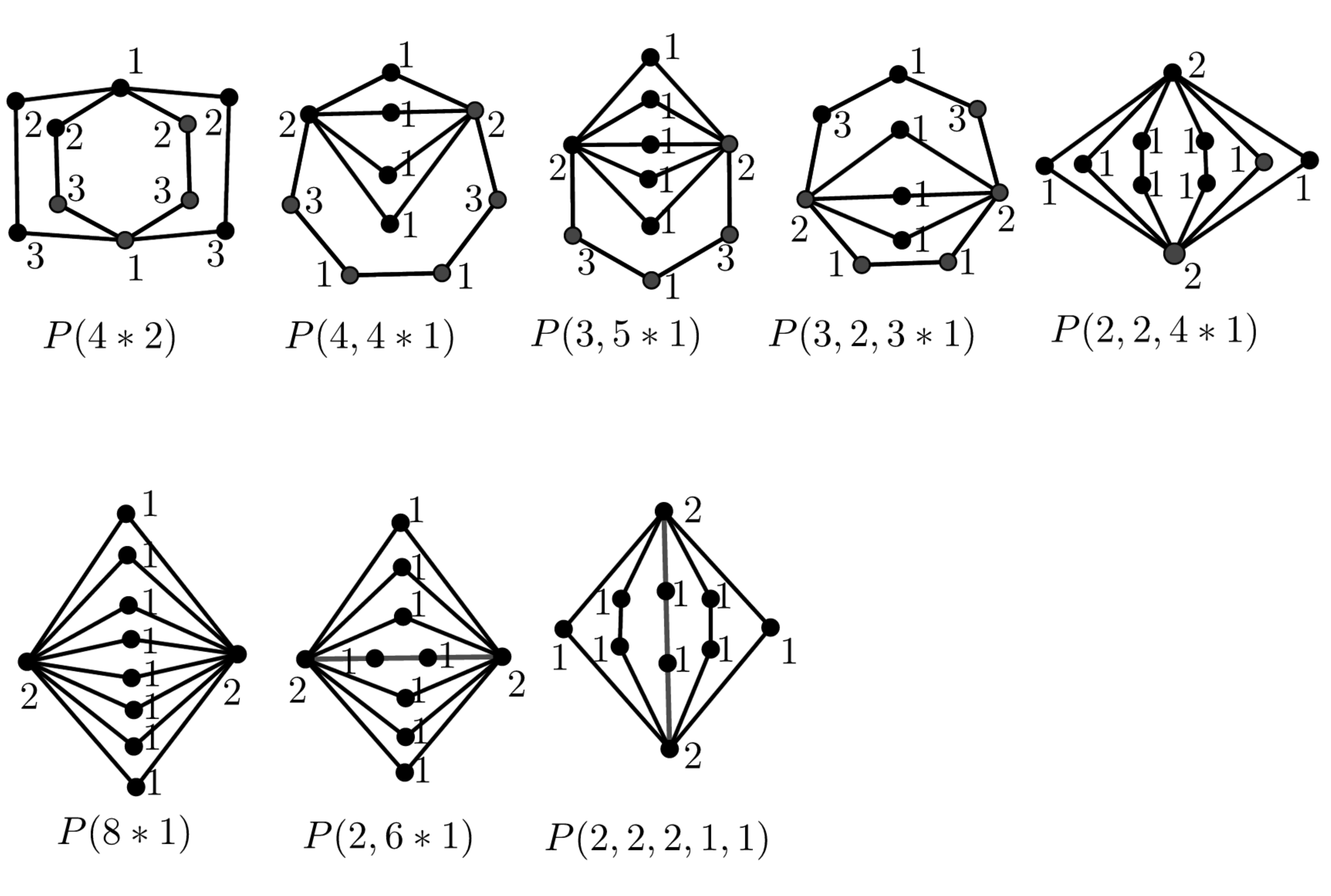}
\end{figure}

\newpage
\section{Code for $mvd$-coloring}

\begin{lstlisting}[ language=Java]
public class GraphContext {
    public static void main(String[] args) {
        String[] verx = new String[]{"A", "B", "C", "D", "E", "F", "G", "H", "I", "J", "K", "L", "M", "N", "O", "P", "Q"};
        int[][] edges = new int[][]{
                {0, 0, 0, 0, 1, 0, 0, 0, 0, 0, 0, 0, 0, 0, 0, 1, 0},
                {0, 0, 0, 0, 0, 0, 0, 1, 0, 0, 0, 0, 0, 0, 0, 0, 1},
                {0, 0, 0, 0, 0, 0, 0, 0, 0, 0, 0, 1, 0, 0, 1, 0, 0},
                {0, 0, 0, 0, 0, 0, 0, 0, 0, 0, 0, 1, 1, 0, 0, 0, 0},
                {1, 0, 0, 0, 0, 1, 0, 0, 0, 0, 1, 0, 0, 1, 0, 0, 0},
                {0, 0, 0, 0, 1, 0, 1, 0, 0, 0, 0, 0, 0, 0, 0, 0, 0},
                {0, 0, 0, 0, 0, 1, 0, 1, 0, 1, 0, 0, 0, 0, 0, 1, 0},
                {0, 1, 0, 0, 0, 0, 1, 0, 0, 0, 0, 0, 1, 1, 1, 0, 0},
                {0, 0, 0, 0, 0, 0, 0, 0, 0, 0, 0, 1, 1, 0, 0, 0, 0},
                {0, 0, 0, 0, 0, 0, 1, 0, 0, 0, 0, 0, 0, 1, 0, 0, 0},
                {0, 0, 0, 0, 1, 0, 0, 0, 0, 0, 0, 0, 0, 0, 0, 1, 0},
                {0, 0, 1, 1, 0, 0, 0, 0, 1, 0, 0, 0, 0, 0, 0, 0, 1},
                {0, 0, 0, 1, 0, 0, 0, 1, 1, 0, 0, 0, 0, 0, 0, 0, 0},
                {0, 0, 0, 0, 1, 0, 0, 1, 0, 1, 0, 0, 0, 0, 0, 0, 0},
                {0, 0, 1, 0, 0, 0, 0, 1, 0, 0, 0, 0, 0, 0, 0, 0, 0},
                {1, 0, 0, 0, 0, 0, 1, 0, 0, 0, 1, 0, 0, 0, 0, 0, 0},
                {0, 1, 0, 0, 0, 0, 0, 0, 0, 0, 0, 1, 0, 0, 0, 0, 0}
        };
        BlockAndCutVerticesBuilder blockAndCutVerticesBuilder = new BlockAndCutVerticesBuilder();
        Edge[][] matrixEdges = blockAndCutVerticesBuilder.create(verx, edges);
        System.out.println("### Input Graph ###");
        blockAndCutVerticesBuilder.printGraph(matrixEdges);
        //use tarjan algorithm to calculate cutVertices and blocks
        blockAndCutVerticesBuilder.makeDFSTarjan(0);
        //print cutVertices, blocks and Adjacency matrix
        blockAndCutVerticesBuilder.printResult();
        System.out.println();
        List<MvdGraph> graphs = blockAndCutVerticesBuilder.createMvdGraph();
        //mvd coloring
        MvdColorMarker mvdColorMarker = new MvdColorMarker();
        mvdColorMarker.markBlocks(graphs);
        Node[] vertices = blockAndCutVerticesBuilder.vertices;
        System.out.println("### Coloring Vertices Results ###");
        for (Node node : vertices) {
            System.out.print(node);
        }
    }
}
------------------------------------------------
public class BlockAndCutVerticesBuilder {
    public Node[] vertices;
    public Edge[][] edges;
    public Set<Node> cutVerticesSet = new HashSet<>();
    public List<List<Node>> block = new ArrayList<>();
    public List<Edge[][]> blocksEdges = new ArrayList<>();
    public Stack<Node> nodesStack = new Stack<>();
    int root;
    int count = 1;
    public Edge[][] create(String[] vertexs, int[][] edges) {
        this.vertices = new Node[vertexs.length];
        for (int i = 0; i < vertexs.length; i++) {
            this.vertices[i] = new Node(vertexs[i]);
        }
        this.edges = new Edge[edges.length][edges[0].length];
        for (int row = 0; row < edges.length; row++) {
            for (int col = 0; col < edges[0].length; col++) {
                this.edges[row][col] = new Edge(edges[row][col]);
            }
        }
        return this.edges;
    }
    public List<MvdGraph> createMvdGraph() {
        List<MvdGraph> graphs = new ArrayList<>();
        for (int i = 0; i < block.size(); i++) {
            graphs.add(new MvdGraph(block.get(i).toArray(new Node[0]), blocksEdges.get(i)));
        }
        return graphs;
    }
    public void printGraph(Edge[][] edges) {
        for (int row = 0; row < edges.length; row++) {
            for (int col = 0; col < edges[0].length; col++) {
                System.out.printf("%d\t", edges[row][col].connected);
            }
            System.out.println();
        }
    }
    public void printResult() {
        System.out.println("### CutVertices and Blocks  ###");
        System.out.println("cutVertices of Input Graph:");
        System.out.println("cutVerticesSet:" + cutVerticesSet);
        System.out.println();
        System.out.println("Block generated from Graph:");
        for (int i = 0; i < block.size(); i++) {
            System.out.println("Block num " + (i + 1));
            System.out.println(block.get(i));
            PrintUtils.printGraph(blocksEdges.get(i));
        }
    }
    public void makeDFSTarjan(int activeNodeIndex) {
        vertices[activeNodeIndex].depth = 1;
        nodesStack.push(vertices[activeNodeIndex]);
        root = activeNodeIndex;
        //calculate the cut vertices and blocks
        DFSTarjan(activeNodeIndex);
        //calculate the blocks' adjacency matrices
        cutBlocksEdges();
    }
    public void cutBlocksEdges() {
        for (List<Node> lists : block) {
            List<Integer> indices = new ArrayList<Integer>();
            for (Node node : lists) {
                indices.add(getIndexOfNode(node));
            }
            Edge[][] blockEdge = new Edge[indices.size()][indices.size()];
            for (int i = 0; i < indices.size(); i++) {
                for (int j = 0; j < indices.size(); j++) {
                    int rowIndex = indices.get(i);
                    int colIndex = indices.get(j);
                    blockEdge[i][j] = edges[rowIndex][colIndex];
                }
            }
            blocksEdges.add(blockEdge);
        }
    }
    private void DFSTarjan(int activeNodeIndex) {
        Node currentNode = this.vertices[activeNodeIndex];
        while(this.getNodesOfUnreachedEdge(activeNodeIndex) != null || currentNode.parent != null) {
            if (this.getNodesOfUnreachedEdge(activeNodeIndex) == null) {
                if (currentNode.low >= currentNode.parent.depth) {
                    if (this.getIndexOfNode(currentNode.parent) != this.root || this.getNodesOfUnreachedEdge(this.root) != null) {
                        this.cutVerticesSet.add(currentNode.parent);
                        currentNode.parent.isCutVertex = true;
                    }
                    ArrayList list = new ArrayList();
                    while(!currentNode.equals(this.nodesStack.peek())) {
                        list.add(this.nodesStack.pop());
                    }
                    list.add(this.nodesStack.pop());
                    list.add(currentNode.parent);
                    this.block.add(list);
                } else {
                    currentNode.parent.low = Math.min(currentNode.parent.low, currentNode.low);
                }
                return;
            }
            Node nextNode = this.getNodesOfUnreachedEdge(activeNodeIndex);
            this.markEdgeReached(currentNode, nextNode);
            if (nextNode.depth == 0) {
                this.nodesStack.push(nextNode);
                nextNode.parent = currentNode;
                ++this.count;
                nextNode.depth = this.count;
                nextNode.low = this.count;
                this.DFSTarjan(this.getIndexOfNode(nextNode));
            } else {
                currentNode.low = Math.min(currentNode.low, nextNode.depth);
            }
        }
    }
    private void markEdgeReached(Node currentNode, Node nextNode) {
        int from = this.getIndexOfNode(currentNode);
        int to = this.getIndexOfNode(nextNode);
        this.edges[from][to].reached = true;
        this.edges[to][from].reached = true;
    }
    private Node getNodesOfUnreachedEdge(int activeVex) {
        for(int col = 0; col < this.edges[activeVex].length; ++col) {
            Edge edge = this.edges[activeVex][col];
            if (edge.connected == 1) {
                Node cur = this.vertices[col];
                if (!edge.reached) {
                    return cur;
                }
            }
        }
        return null;
    }
    private int getIndexOfNode(Node node) {
        for(int i = 0; i < this.vertices.length; ++i) {
            if (this.vertices[i].name.equals(node.name)) {
                return i;
            }
        }
        return -1;
    }
}
------------------------------------------------
public class IsomorphicJudger {
    public static Node[] getIsomorphicColors(MvdGraph block, MvdGraph type) {
        if (block == null || type == null) {
            return null;
        }
        if (block.vertices == null || type.vertices == null) {
            return null;
        }
        if (block.vertices.length != type.vertices.length) {
            return null;
        }
        int n = block.vertices.length;
        //block adjacency matrix
        int[][] blockArray = new int[n][n];
        for (int i1 = 0; i1 < n; i1++) {
            blockArray[i1] = new int[n];
        }
        // block 01 adjacency matrix
        int[][] blockTongxin = new int[n][n];
        for (int i1 = 0; i1 < n; i1++) {
            blockTongxin[i1] = new int[n];
        }
        //block XOR matrix
        int[][] blockYihuo = new int[n][n];
        for (int i1 = 0; i1 < n; i1++) {
            blockYihuo[i1] = new int[n];
        }
        //block XNOR matrix
        int[][] blockTonghuo = new int[n][n];
        for (int i1 = 0; i1 < n; i1++) {
            blockTonghuo[i1] = new int[n];
        }
        //type adjacency matrix
        int[][] typeArray = new int[n][n];
        for (int i2 = 0; i2 < n; i2++) {
            typeArray[i2] = new int[n];
        }
        // type 01 adjacency matrix
        int[][] typeTongxin = new int[n][n];
        for (int i1 = 0; i1 < n; i1++) {
            typeTongxin[i1] = new int[n];
        }
        //block XOR matrix
        int[][] typeYihuo = new int[n][n];
        for (int i1 = 0; i1 < n; i1++) {
            typeYihuo[i1] = new int[n];
        }
        //block XNOR matrix
        int[][] typeTonghuo = new int[n][n];
        for (int i1 = 0; i1 < n; i1++) {
            typeTonghuo[i1] = new int[n];
        }
        for (int row = 0; row < block.edges.length; row++) {
            for (int col = 0; col < block.edges[0].length; col++) {
                blockArray[row][col] = block.edges[row][col].connected;
            }
        }
        for (int row = 0; row < type.edges.length; row++) {
            for (int col = 0; col < type.edges[0].length; col++) {
                typeArray[row][col] = type.edges[row][col].connected;
            }
        }
        oneZero(blockArray, blockTongxin, n);
        xor(blockArray, blockTongxin, blockYihuo, n);
        xnor(blockArray, blockTongxin, blockTonghuo, n);
        oneZero(typeArray, typeTongxin, n);
        xor(typeArray, typeTongxin, typeYihuo, n);
        xnor(typeArray, typeTongxin, typeTonghuo, n);
        return getTransVertices(blockArray, blockYihuo, blockTonghuo, typeArray, typeYihuo, typeTonghuo, n, type.vertices, 0, 0);
    }

                }
------------------------------------------------
public class MvdColorMarker {
    public static List<MvdGraph> GRAPHS = new ArrayList<>();
    public int colorCount = 0;
    //load the template coloring graphs in the files
    static {
        GRAPHS.add(ReadGraphUtils.readFiles("graph_9Vertex-9.txt"));
        GRAPHS.add(ReadGraphUtils.readFiles("graph_9Vertex-11.txt"));
    }
    public void markBlocks(List<MvdGraph> blocks) {
        for (MvdGraph block : blocks) {
            findTypeAndColoring(block);
        }
        //Change the color of those vertices in the template block that have the same color as the cut-vertex to the color of the current cut-vertex
        for (MvdGraph block : blocks) {
            updateColorsOfCutVertex(block);
        }
    }
    private void updateColorsOfCutVertex(MvdGraph block) {
        if (block.template == null || block.vertices == null) {
            return;
        }
        for (int i = 0; i < block.vertices.length; i++) {
            Node node = block.vertices[i];
            if (!node.isCutVertex) {
                continue;
            }
            Node templateNode = block.template[i];
            for (int j = 0; j < block.template.length; j++) {
                if (block.template[j].color.equals(templateNode.color)) {
                    block.vertices[j].color = node.color;
                }
            }
        }
    }
    private void findTypeAndColoring(MvdGraph block) {
        System.out.println("### Isomorphic Relationship ###");
        for (MvdGraph type : GRAPHS) {
            findTypeAndColoring(block, type);
        }
    }
    private void findTypeAndColoring(MvdGraph graph, MvdGraph type) {
        Node[] template = IsomorphicJudger.getIsomorphicColors(graph, type);
        if (template == null) {
            return;
        }
        for (int i = 0; i < graph.vertices.length; i++) {
            //coloring the vertices
            graph.vertices[i].color = template[i].color + colorCount;
        }
        colorCount += graph.vertices.length;
        graph.template = template;
        printRelationships(graph,type,template);
    }
    private void printRelationships(MvdGraph graph, MvdGraph type,Node[] template) {
        System.out.println("current block:");
        PrintUtils.printVerticesArray(graph.vertices);
        System.out.println("isomorphic block before elementary operations:");
        PrintUtils.printVerticesArray(type.vertices);
        PrintUtils.printGraph(type.edges);
        System.out.println("isomorphic block: after elementary operations:");
        PrintUtils.printVerticesArray(template);
        System.out.println();
    }
}
------------------------------------------------
public class Node {
    public String name;
    public Node parent;
    public int depth;
    public int low;
    public Integer color;
    public Boolean isCutVertex = false;
    public Node(String name) {
        this.name = name;
        this.depth = 0;
        this.low = this.depth;
    }
    public Node(String name, Integer color) {
        this.name = name;
        this.color = color;
        this.depth = 0;
        this.low = this.depth;
    }
    public String toString() {
        if (color == null) {
            return "{" + "'" + name + '\'' + '}' ;
        }
        return "{" + "'" + name + '\'' + ":" + color + '}' + " ";
    }
    public boolean equals(Object o) {
        if (this == o) {
            return true;
        }
        if (o == null || getClass() != o.getClass()) {
            return false;
        }
        Node node = (Node) o;
        return depth == node.depth && low == node.low && color == node.color && Objects.equals(name, node.name) && Objects.equals(parent, node.parent);
    }
    public int hashCode() {
        return Objects.hash(name);
    }
}
------------------------------------------------
public class Edge {
    public int connected;
    public boolean reached;
    public Edge(int connected) {
        this.connected = connected;
        this.reached = false;
    }
}
------------------------------------------------
public class MvdGraph {
    public Edge[][] edges;
    public Node[] vertices;
    //record the colored vertices corresponding to vertices
    public Node[] template;
    public MvdGraph(Node[] vertices, Edge[][] edges) {
        this.vertices = vertices;
        this.edges = edges;
    }
    public void print() {
        for (Node node : vertices) {
            System.out.print(node);
        }
        System.out.println();
        printGraph(edges);
    }
    private void printGraph(Edge[][] edges) {
        for (int row = 0; row < edges.length; row++) {
            for (int col = 0; col < edges[0].length; col++) {
                System.out.printf("%d\t", edges[row][col].connected);
            }
            System.out.println();
        }
        System.out.println("----------------");
    }
}
------------------------------------------------
public class PrintUtils {
    public static void printGraph(Edge[][] edges) {
        for (int row = 0; row < edges.length; row++) {
            for (int col = 0; col < edges[0].length; col++) {
                System.out.printf("%d\t", edges[row][col].connected);
            }
            System.out.println();
        }
    }
    public static void printVerticesArray(Node[] nodes) {
        for (Node node:nodes) {
            System.out.print(node);
        }
        System.out.println();
    }
}
------------------------------------------------
public class ReadGraphUtils {
    public static MvdGraph readFiles(String fileName) {
        try {
            FileReader in = new FileReader("resources/"+fileName);
            BufferedReader reader = new BufferedReader(in);
            String line, verticesLines;
            ArrayList<ArrayList<Integer>> edgesList = new ArrayList<ArrayList<Integer>>();
            verticesLines = reader.readLine();
            Node[] vertices = createVertices(verticesLines.split(","));
            while ((line = reader.readLine()) != null) {
                String[] str = line.split(",");
                ArrayList lineEdge = new ArrayList<>();
                for (String edge : str) {
                    lineEdge.add(Integer.parseInt(edge.trim()));
                }
                edgesList.add(lineEdge);
            }
            Edge[][] edges = createEdges(edgesList);
            if (edges.length != vertices.length) {
                System.out.println("The number of vertices and edges should be equivalent");
                return null;
            }
            return new MvdGraph(vertices, edges);
        } catch (Exception e) {
            System.out.println(e);
            return null;
        }
    }
    private static Node[] createVertices(String[] strVertices) {
        Node[] vertices = new Node[strVertices.length];
        for (int i = 0; i < strVertices.length; i++) {
            vertices[i] = new Node(strVertices[i].split(":")[0].trim(), Integer.parseInt(strVertices[i].split(":")[1].trim()));
        }
        return vertices;
    }
    private static Edge[][] createEdges(ArrayList<ArrayList<Integer>> edgesList) {
        Edge[][] result = new Edge[edgesList.size()][edgesList.size()];
        for (int row = 0; row < result.length; row++) {
            for (int col = 0; col < result[0].length; col++) {
                result[row][col] = new Edge(edgesList.get(row).get(col));
            }
        }
        return result;
    }
}
------------------------------------------------
//resources directory
a:1, b:2, c:1, d:2, e:1, f:2, g:1, h:1, i:2
0, 1, 0, 0, 0, 1, 0, 0, 0
1, 0, 1, 0, 0, 0, 1, 0, 0
0, 1, 0, 1, 0, 0, 0, 1, 1
0, 0, 1, 0, 1, 0, 0, 0, 0
0, 0, 0, 1, 0, 1, 0, 0, 1
1, 0, 0, 0, 1, 0, 1, 1, 0
0, 1, 0, 0, 0, 1, 0, 0, 0
0, 0, 1, 0, 0, 1, 0, 0, 0
0, 0, 1, 0, 1, 0, 0, 0, 0

a:1, b:2, c:1, d:2, e:1, f:2, g:2, h:1, i:2
0, 1, 0, 0, 0, 1, 1, 0, 1
1, 0, 1, 0, 0, 0, 0, 0, 0
0, 1, 0, 1, 0, 0, 0, 0, 0
0, 0, 1, 0, 1, 0, 0, 1, 0
0, 0, 0, 1, 0, 1, 0, 0, 0
1, 0, 0, 0, 1, 0, 0, 0, 0
1, 0, 0, 0, 0, 0, 0, 1, 0
0, 0, 0, 1, 0, 0, 1, 0, 1
1, 0, 0, 0, 0, 0, 0, 1, 0
------------------------------------------------
//the output results
### CutVertices and Blocks  ###
cutVerticesSet:[{'H'}]
Block generated from Graph:
Block num 1
[{'I'}, {'M'}, {'D'}, {'O'}, {'C'}, {'L'}, {'Q'}, {'B'}, {'H'}]
0	1	0	0	0	1	0	0	0	
1	0	1	0	0	0	0	0	1	
0	1	0	0	0	1	0	0	0	
0	0	0	0	1	0	0	0	1	
0	0	0	1	0	1	0	0	0	
1	0	1	0	1	0	1	0	0	
0	0	0	0	0	1	0	1	0	
0	0	0	0	0	0	1	0	1	
0	1	0	1	0	0	0	1	0	
Block num 2
[{'K'}, {'P'}, {'J'}, {'N'}, {'H'}, {'G'}, {'F'}, {'E'}, {'A'}]
0	1	0	0	0	0	0	1	0	
1	0	0	0	0	1	0	0	1	
0	0	0	1	0	1	0	0	0	
0	0	1	0	1	0	0	1	0	
0	0	0	1	0	1	0	0	0	
0	1	1	0	1	0	1	0	0	
0	0	0	0	0	1	0	1	0	
1	0	0	1	0	0	1	0	1	
0	1	0	0	0	0	0	1	0	
### Isomorphic Relationship ###
current block:
{'I':2} {'M':1} {'D':2} {'O':1} {'C':2} {'L':1} {'Q':2} {'B':1} {'H':2}
isomorphic block before elementary operations:
{'a':1} {'b':2} {'c':1} {'d':2} {'e':1} {'f':2} {'g':2} {'h':1} {'i':2}
0	1	0	0	0	1	1	0	1	
1	0	1	0	0	0	0	0	0	
0	1	0	1	0	0	0	0	0	
0	0	1	0	1	0	0	1	0	
0	0	0	1	0	1	0	0	0	
1	0	0	0	1	0	0	0	0	
1	0	0	0	0	0	0	1	0	
0	0	0	1	0	0	1	0	1	
1	0	0	0	0	0	0	1	0	
isomorphic block: after elementary operations:
{'g':2} {'h':1} {'i':2} {'e':1} {'f':2} {'a':1} {'b':2} {'c':1} {'d':2}
### Isomorphic Relationship ###
current block:
{'K':10} {'P':11} {'J':11} {'N':10} {'H':11} {'G':10} {'F':10} {'E':11} {'A':10}
isomorphic block before elementary operations:
{'a':1} {'b':2} {'c':1} {'d':2} {'e':1} {'f':2} {'g':1} {'h':1} {'i':2}
0	1	0	0	0	1	0	0	0	
1	0	1	0	0	0	1	0	0	
0	1	0	1	0	0	0	1	1	
0	0	1	0	1	0	0	0	0	
0	0	0	1	0	1	0	0	1	
1	0	0	0	1	0	1	1	0	
0	1	0	0	0	1	0	0	0	
0	0	1	0	0	1	0	0	0	
0	0	1	0	1	0	0	0	0	
isomorphic block: after elementary operations:
{'a':1} {'b':2} {'d':2} {'e':1} {'i':2} {'g':1} {'h':1} {'f':2} {'c':1}
### Coloring Vertices Results ###
{'A':10} {'B':1} {'C':11} {'D':11} {'E':11} {'F':10} {'G':10} {'H':11} {'I':11} {'J':11} {'K':10} {'L':1} {'M':1} {'N':10} {'O':1} {'P':11} {'Q':11}
\end{lstlisting}
\end{appendix}
\end{document}